\newtheorem{theorem}{Theorem}
\newtheorem*{theoremBKM}{Theorem BKM}
\newtheorem{proposition}{Proposition}
\newtheorem{lemma}{Lemma}
\newtheorem{corollary}{Corollary}
\theoremstyle{remark}
\newtheorem{remark}{Remark}
\newcommand\R{\mathbb{R}}
\newcommand\Z{\mathbb{Z}}
\newcommand\N{\mathbb{N}}
\newcommand\cB{\mathcal{B}}
\newcommand\cN{\mathcal{N}}
\newcommand\cW{\mathcal W}
\newcommand\cM{\mathcal M}
\DeclareMathOperator{\erf}{erf}
\DeclareMathOperator{\erfi}{erf^{-1}}
\newcommand{\Sph}{\mathbb{S}}
\newcommand{\cG}{\mathcal{G}}
\newcommand{\cL}{\mathcal{L}}
\newcommand{\diag}{\operatorname{diag}}
\newcommand{\cU}{{\vv U}}
\newcommand{\cUp}{{\vv U}^+}
\newcommand{\tcU}{\tilde{\vv U}}
\newcommand{\hcB}{\hat{B}}
\newcommand{\tcUp}{\tilde{\vv U}^+}
\newcommand{\wcU}{2{\vv U}}
\newcommand{\vv}[1]{{\mathbf{#1}}}
\newcommand{\bm}[1]{{\mathbf{#1}}}
\newcommand{\Rp}{\R_+}
\newcommand{\Mmn}{M_{m,n}}
\newcommand{\ve}{\varepsilon}
\newcommand{\ds}{\displaystyle}
\def\e{{\rm e}}
\def\dd{{\rm d}}
\newcommand{\hidden}[1]{}
\begin{document}

\large

\title[Diophantine Approximation and Interference Alignment]{Diophantine Approximation and applications in Interference Alignment}
\author{F. Adiceam, V. Beresnevich, J. Levesley, S. Velani}
\author{E. Zorin}
\address{Department of Mathematics, University of York,  York, YO10
5DD, UK } \email{ victor.beresnevich@york.ac.uk, jason.levesley@york.ac.uk, sanju.velani@york.ac.uk, evgeniy.zorin@york.ac.uk,  Faustin.Adiceam@york.ac.uk}
\thanks{FA research is supported by EPSRC Programme Grant: EP/J018260/1. VB and SV research is supported in part by EPSRC Programme Grant: EP/J018260/1.}

%
%

\maketitle

{\small \centerline{{\it Dedicated to  Maurice Dodson   }}
}

\begin{abstract}
This paper is motivated by recent applications of Diophantine approximation in electronics, in particular, in the rapidly developing area of Interference Alignment. Some remarkable advances in this area give substantial credit to the fundamental Khintchine--Groshev Theorem and, in particular, to its far reaching generalisation for submanifolds of a Euclidean space. With a view towards the aforementioned applications, here we introduce and prove quantitative explicit generalisations of the Khintchine--Groshev Theorem for non--degenerate submanifolds of $\R^n$. The importance of such quantitative statements is explicitly discussed in Jafar's monograph \cite[\S4.7.1]{J2010}.
\end{abstract}

\vspace{2cm}

\noindent{\small 2000 {\it Mathematics Subject Classification}\/:
Primary 11J83; Secondary 11J13, 11K60, 94A12}\bigskip

\noindent{\small{\it Keywords and phrases}\/: Metric Diophantine approximation,  Khintchine–Groshev Theorem,  non-degenerate manifolds.}

\setcounter{tocdepth}{1}
\tableofcontents

\section{Introduction}

The present paper is motivated by a recent series of publications, including \cite{GMK2010, J2010, MOGMAK, MOGMAK2, MMK2010, NW2012, WuShamaiVerdu, XU2010, ZamanighomiWang}, which utilize the theory of metric Diophantine approximation to develop new approaches in interference alignment, a concept within the field of wireless communication networks. This new link is both surprising and striking. The key ingredient from the number theoretic side is the fundamental Khintchine--Groshev Theorem and its variations. In this paper we seek to address certain problems in Diophantine approximation which crop up, or impinge upon, the applications to interference alignment. The results obtained represent quantitative refinements of the Khintchine--Groshev Theorem that are relevant to the applications mentioned above.
Indeed, the desirability of such quantitative statements is explicitly eluded to in Jafar's monograph \cite[\S4.7.1]{J2010}.

Although the main emphasis will be on the Khintchine--Groshev Theorem for submanifolds of $\R^n$,  we begin by considering the classical theory for systems of linear forms of independent variables. This approach has two benefits. Firstly, we are able to introduce the key ideas without too much technical machinery obscuring the picture. Secondly, the refinements of the classical theory produce effective results with much better constants.

In order to recall Khintchine's theorem we first define
the set  $ \cW (\psi)  $ of \emph{$\psi$-well approximable numbers}. To this end, denote by $\Rp$ the set of non--negative real numbers. Given  a real positive function  $\psi: \Rp \to \Rp $  with  $ \psi(r)\rightarrow0$ as $r\rightarrow\infty$, let then
$$
\cW (\psi):=\left\{ x \in \R :  |qx-p|<\psi(q)\text{ for i.m. }(q,p)\in\N\times\Z \right\},
$$
where `i.m.' reads `infinitely many'. For obvious reasons the function $\psi$  is often referred to as an \emph{approximating function}. The points $x$ in $\cW(\psi)$ are characterized by the property that they admit approximation by rational points $p/q$ with the error at most $\psi(q)/q$.

A simple `volume' argument together with the Borel--Cantelli Lemma from pro\-ba\-bi\-li\-ty theory implies that
\[
|\cW (\psi)|= 0   \quad {\rm \ if \  }  \quad \sum_{q=1}^\infty  \,\,\psi(q)  \;   <\infty  \,,
\]
where $|X|$ stands for the Lebesgue measure of $X\subset\R$. The above convergence statement represents the easier part of the following beautiful result due to Khintchine which gives a criterion for the size of the set $\cW(\psi)$ in terms of Lebesgue measure. In what follows, we say that $X\subset\R$ is \emph{full} in $\R$ and write $|X| = \text{\rm F\small{ULL}}$ if $|\R \setminus X | = 0 $; that is, the complement of $X$ in $\R$ is of Lebesgue measure zero. The following is a slightly more general version of Khintchine, see \cite{Beresnevich-Dickinson-Velani-06:MR2184760}.

\medskip

\renewcommand{\thetheorem}{\Alph{theorem}}

\begin{theorem}[Khintchine, 1924]\label{thmA}
Let $\psi$ be an approximating function.
Then
\[
    |\cW(\psi)| \ = \ \begin{cases} 0
      &\ds\text{if } \quad \sum_{q=1}^\infty  \psi(q)<\infty \, , \\[3ex]
      \text{\rm F\small{ULL}}
      &\ds\text{if } \quad  \sum_{q=1}^\infty  \psi(q)=\infty
      \  \text{ and $\psi$ is
                    monotonic}.
                  \end{cases}
\]
\end{theorem}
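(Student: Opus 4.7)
\emph{Convergence.} Since $\cW(\psi)$ is invariant under translation by $\Z$, it suffices to show $|\cW(\psi)\cap [0,1]| = 0$. For each $q\in\N$, set
$$
A_q \ := \ \{x\in[0,1] : |qx-p|<\psi(q) \text{ for some } p\in\Z\}.
$$
Then $A_q$ is a union of at most $q+1$ intervals centred at rationals $p/q\in[0,1]$, each of length at most $2\psi(q)/q$, whence $|A_q|\le 2\psi(q)$. By the very definition of $\cW(\psi)$, we have $\cW(\psi)\cap[0,1]\subseteq \limsup_{q\to\infty}A_q$, so the assumption $\sum\psi(q)<\infty$ together with the (convergence) Borel--Cantelli lemma yields $|\cW(\psi)\cap[0,1]|=0$.

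\emph{Divergence.} I would again work on $[0,1]$ and prove $|\limsup A_q|=1$. Without loss we may assume $\psi(q)\le 1/2$, so $|A_q|=2\psi(q)$ and the hypothesis becomes $\sum|A_q|=\infty$. The plan is to apply the divergence form of Borel--Cantelli via the Chung--Erd\H{o}s inequality, which requires a quasi--independence estimate of the shape
$$
\sum_{q,q'\le N}|A_q\cap A_{q'}| \ \ll \ \Big(\sum_{q\le N}|A_q|\Big)^{2}.
$$
To obtain this, I would fix $q\ne q'$ and note that $x\in A_q\cap A_{q'}$ forces $|qx-p|<\psi(q)$ and $|q'x-p'|<\psi(q')$ for some integers $p,p'$; eliminating $x$ gives $|pq'-p'q|<q'\psi(q)+q\psi(q')$. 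Counting admissible pairs $(p,p')$ by fixing the value of $pq'-p'q$ and invoking $\gcd(q,q')$ yields the required pairwise control, and the monotonicity of $\psi$ lets one sum dyadically over $q\le N$ to close the estimate.

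The Chung--Erd\H{o}s inequality then forces $|\limsup A_q|\ge c$ for some absolute $c>0$. To promote positive to full measure, observe that the preceding reasoning goes through verbatim with $A_q$ replaced by $A_q\cap I$ for any subinterval $I\subseteq[0,1]$; consequently $|\limsup A_q\cap I|\ge c|I|$ uniformly, and the Lebesgue density theorem upgrades this to $|\limsup A_q|=1$, as required. The main technical obstacle is the pairwise bound on $|A_q\cap A_{q'}|$: the contribution of pairs with a large common factor is genuinely delicate, and it is exactly here that the monotonicity of $\psi$ is indispensable, permitting one to redistribute the mass across dyadic ranges and absorb these exceptional pairs into the quasi--independence estimate above.
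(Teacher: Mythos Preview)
The paper does not supply its own proof of Theorem~A: the result is simply stated as classical, with the general version attributed to \cite{Beresnevich-Dickinson-Velani-06:MR2184760}. The only argument the paper gives is the one-line remark preceding the statement that the convergence case follows from ``a simple `volume' argument together with the Borel--Cantelli Lemma'', which is exactly your convergence proof. So there is nothing to compare against for the divergence half.

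On the substance of your proposal: the convergence part is correct and is precisely the argument the paper alludes to (the minor arithmetic $|A_q|\le 2\psi(q)$ is fine once the half-intervals at the endpoints of $[0,1]$ are accounted for). Your divergence outline is the standard modern route --- quasi-independence of the $A_q$, Chung--Erd\H{o}s to get positive measure, then a density/localisation argument to upgrade to full measure --- and the ingredients you name (the elimination step $|pq'-p'q|<q'\psi(q)+q\psi(q')$, the $\gcd$ counting, and the use of monotonicity to handle pairs with large common factor) are indeed the right ones. That said, what you have written for divergence is a plan rather than a proof: the overlap estimate is asserted but not carried out, and that estimate is where all the work lies. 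If this were being submitted as a proof you would need to execute the pairwise bound explicitly, for instance by reducing to coprime $(p,q)$ or by the dyadic $\gcd$-summation you mention; as it stands the divergence half is a correct sketch but not a complete argument.
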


\medskip

Thus, given any monotonic approximating function $\psi$, for almost all\footnote{\,`For almost all' means for all except from a set of Lebesgue measure zero.} $x \in\R$ the inequality $|x-p/q | < \psi(q)/q$  holds for infinitely many rational numbers $p/q$  if and only if the
sum $\sum_{q=1}^\infty\psi(q)$ diverges.

There are various generalisations of Khintchine's theorem to higher dimensions --- see \cite{BBDV09} for an overview. Here we shall consider the case of systems of linear forms which originates from a paper by Groshev in 1938.
In what follows, $m$ and $n$ will denote positive integers and $\Mmn$ will stand for the set of $m\times n$ matrices over $\R$. Given a function $\Psi:\Z^n\to\Rp$, let
\[
\cW_{m,n}(\Psi):=\left\{ \vv X=(x_{i,j}) \in \Mmn :  \|\vv X\vv a\|<\Psi(\vv a)\text{ for i.m. }\vv a\in\Z^n\setminus\{\vv0\}\right\},
\]
where $\vv a=(a_1,\dots,a_n)$,
$$
\|\vv X\vv a\| :=  \max_{1\le i\le m}\|x_{i,1}a_1 + \ldots + x_{i,n}a_n\|
$$
and $ \| x\| :=  \min \{ | x - k | : k \in \Z \}$  is the distance of $ x \in \R$ from the nearest integer.
Given a subset $X$ in $\Mmn$, we will write $|X|_{mn}$ for its ambient (i.e.~$mn$--dimensional) Lebesgue measure. It is easily seen that $\cW_{1,1}(\Psi)$ coincides with $\cW(\psi)$ when $\Psi(q)=\psi(|q|)$. Therefore the following result is the natural extension of Theorem~\ref{thmA} to higher dimensions. Notice that there is no monotonicity assumption on the approximating function.

\begin{theorem}\label{thmB}
Let $m,n\in\N$ with $nm>1$, $\psi:\N\to\Rp$ be an approximating function and
\begin{equation} \label{theorem_one_def_S}
\Sigma_\psi:=\sum_{q=1}^\infty  q^{n-1}\psi(q)^m\,.
\end{equation}
Let $\Psi:\Z^n\to\Rp$ be given by $\Psi(\vv a):=\psi(|\vv a|)$ for $\vv a=(a_1,\dots,a_n)\in\Z^n\setminus\{\vv0\}$, where $|\vv a|=\max_{1\le i\le n}|a_i|$. Then
\[
    |\cW_{m,n}(\Psi)|_{mn} \ = \ \begin{cases} 0
      &\ds\text{if } \quad \Sigma_\psi<\infty \, , \\[1ex]
      \text{\rm F\small{ULL}}
      &\ds\text{if } \quad  \Sigma_\psi=\infty\,.
                  \end{cases}
\]
\end{theorem}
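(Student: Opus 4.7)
The plan is to prove the two directions separately.

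\textbf{Convergence.} By the $\Z^{mn}$-periodicity of the condition defining $\cW_{m,n}(\Psi)$, it suffices to work inside the unit cube $I=[0,1]^{mn}$. For each $\vv a\in\Z^n\setminus\{\vv 0\}$ set $A_{\vv a}:=\{\vv X\in I:\|\vv X\vv a\|<\psi(|\vv a|)\}$. Since the linear form $\vv x\mapsto \vv x\cdot\vv a$ induces a measure-preserving map $[0,1]^n\to\R/\Z$, and the rows of $\vv X$ are mutually independent, one gets $|A_{\vv a}|_{mn}\le(2\psi(|\vv a|))^m$. The number of $\vv a\in\Z^n$ with $|\vv a|=q$ is $O(q^{n-1})$, hence
$$\sum_{\vv a\ne\vv 0}|A_{\vv a}|_{mn}\;\ll\;\sum_{q=1}^\infty q^{n-1}\psi(q)^m\;=\;\Sigma_\psi,$$
and the convergence Borel--Cantelli lemma gives $|\cW_{m,n}(\Psi)\cap I|_{mn}=0$ whenever $\Sigma_\psi<\infty$.

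\textbf{Divergence.} This is the substantive half. First I would invoke a Cassels/Gallagher-type zero--one law for the $\limsup$ set $\cW_{m,n}(\Psi)\cap I$, which asserts its measure is either $0$ or $1$; it then suffices to prove positivity. For this I would apply the second-moment (quasi-independence) form of the divergent Borel--Cantelli lemma, which yields
$$\bigl|\limsup_{|\vv a|\to\infty} A_{\vv a}\bigr|_{mn}\;\ge\;\limsup_{N\to\infty}\frac{\bigl(\sum_{0<|\vv a|\le N}|A_{\vv a}|_{mn}\bigr)^2}{\sum_{0<|\vv a|,|\vv b|\le N}|A_{\vv a}\cap A_{\vv b}|_{mn}}.$$
The numerator tends to infinity: truncating $\psi(q)$ at $1$ does not affect the divergence of $\Sigma_\psi$, after which $\sum|A_{\vv a}|_{mn}$ is of order $\Sigma_\psi\to\infty$.

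The crux is a sharp upper bound on the pairwise intersections. For $\vv a,\vv b\in\Z^n\setminus\{\vv 0\}$ that are $\Q$-linearly independent, the joint map $\vv x\mapsto(\vv x\cdot\vv a,\vv x\cdot\vv b)\bmod\Z^2$ on each row is measure-preserving onto the $2$-torus, so $|A_{\vv a}\cap A_{\vv b}|_{mn}=|A_{\vv a}|_{mn}\,|A_{\vv b}|_{mn}$, giving exactly the ``independent'' contribution. The delicate part is the linearly dependent pairs $\vv b=k\vv a$ with $k\in\Q^*$: here one must bound $|A_{\vv a}\cap A_{k\vv a}|_{mn}$ directly and verify that the aggregate dependent contribution is of lower order than the square of the first moment. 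The assumption $nm>1$ enters exactly here: if $n\ge 2$ the integer directions spread across $\Z^n$ so aligned pairs are comparatively rare, while if $m\ge 2$ the independence across $m$ rows provides additional decorrelation. This dependent-pair second moment is the main obstacle, and is precisely what allows the monotonicity of $\psi$ to be dropped when $nm>1$, in contrast with Khintchine's one-dimensional theorem.
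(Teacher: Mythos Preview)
The paper does not give its own proof of Theorem~B; it is stated as a known result, with the divergence case attributed to Groshev (under monotonicity), Schmidt \cite{Schmidt-1960} for $n\ge3$, Gallagher \cite{Ga65} for $n=1$ with $m\ge2$, and Beresnevich--Velani \cite{BV10} for the remaining case $n=2$, while the convergence case is noted to be ``a relatively simple application of the Borel--Cantelli Lemma.'' There is therefore no in-paper proof to compare against.

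Your convergence argument is correct and complete, and matches the paper's remark. Your divergence outline sets up the right framework---a zero--one law together with the second-moment (quasi-independence) Borel--Cantelli lemma---and your observation that $\Q$-linearly independent $\vv a,\vv b$ yield exact independence of $A_{\vv a}$ and $A_{\vv b}$ is valid. But the proposal halts precisely at the genuine difficulty: bounding the aggregate contribution of the linearly dependent pairs without any monotonicity hypothesis on $\psi$. This is not a routine estimate; it is the entire substance of the references the paper cites, and the treatment differs in each of the three regimes $n\ge3$, $n=2$, and $n=1$ with $m\ge2$. As written, the divergence half is a plausible plan rather than a proof.
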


Theorem~B was first obtained by Groshev  under the assumption that $q^n\psi(q)^m$ is  monotonic in the case of divergence. The redundancy of the monotonicity condition for $n\ge 3$ follows from Schmidt's paper \cite[Theorem~2]{Schmidt-1960} and for $  n = 1 $ from Gallagher's paper \cite{Ga65}. Theorem~B as stated was eventually proved in \cite{BV10} where the remaining case of $n=2$ was addressed. The convergence case of Theorem~B is a relatively simple application of the Borel--Cantelli Lemma and it holds for arbitrary functions $\Psi$.  Thus together with  Theorem A,   we have the following extremely general statement in the case of convergence.

\begin{theorem}\label{thmC}
Let $m,n\in\N$ and $\Psi:\Z^n\to\Rp$ be any function such that the sum
\begin{equation} \label{S}
\Sigma_\Psi:=\sum_{\vv a\in\Z^n\setminus\{\vv 0\}}\Psi(\vv a)^m
\end{equation}
converges. Then
$$
    |\cW_{m,n}(\Psi)|_{mn}  =  0\,.
$$
\end{theorem}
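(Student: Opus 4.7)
The plan is to apply the Borel--Cantelli Lemma. For each $\vv a \in \Z^n \setminus \{\vv 0\}$ set
$$E_{\vv a} \;:=\; \{\vv X \in \Mmn : \|\vv X\vv a\| < \Psi(\vv a)\},$$
so that $\cW_{m,n}(\Psi) = \limsup_{\vv a} E_{\vv a}$. Because the distance-to-the-nearest-integer function is invariant under integer translations, each $E_{\vv a}$, and therefore $\cW_{m,n}(\Psi)$, is $\Z^{mn}$--periodic as a subset of $\Mmn$. Hence it suffices to prove the conclusion after intersecting with the unit cube $[0,1)^{mn}$.

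The heart of the argument is the uniform estimate
$$\bigl|E_{\vv a} \cap [0,1)^{mn}\bigr|_{mn} \;\leq\; C\,\Psi(\vv a)^m$$
for some constant $C=C(m,n)$ independent of $\vv a$. The condition $\|\vv X\vv a\|<\Psi(\vv a)$ factorises across the $m$ rows of $\vv X$, so by Fubini this reduces to showing
$$\bigl|\{\vv x \in [0,1)^n : \|\vv a\cdot\vv x\|<\Psi(\vv a)\}\bigr| \;\leq\; c\,\Psi(\vv a)$$
with $c$ absolute. To see this, fix an index $j$ with $a_j\neq 0$ and integrate first with respect to $x_j$ (holding the other coordinates fixed): as $x_j$ runs through $[0,1)$, the linear form $\vv a\cdot\vv x$ sweeps an interval of length $|a_j|$, which meets at most $|a_j|+1$ unit intervals centred on integers; the preimage in $x_j$ of the $\Psi(\vv a)$--neighbourhood of each such integer has length at most $2\Psi(\vv a)/|a_j|$. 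Together this gives an upper bound of $2(|a_j|+1)\Psi(\vv a)/|a_j|\leq 4\Psi(\vv a)$, uniformly in $\vv a$, after which integration over the remaining $n-1$ coordinates costs nothing.

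Summing the resulting measure bound over $\vv a$ gives
$$\sum_{\vv a\in\Z^n\setminus\{\vv 0\}}\bigl|E_{\vv a}\cap[0,1)^{mn}\bigr|_{mn} \;\leq\; C\,\Sigma_\Psi \;<\;\infty,$$
so the convergence part of the Borel--Cantelli Lemma yields $|\cW_{m,n}(\Psi)\cap[0,1)^{mn}|_{mn}=0$, and the periodicity observation promotes this to $|\cW_{m,n}(\Psi)|_{mn}=0$. There is no genuine obstacle: the only point that requires any care is ensuring the constant $C$ is independent of $\vv a$, and this is guaranteed precisely by the elementary bound $(|a_j|+1)/|a_j|\leq 2$ valid for every nonzero integer $a_j$.
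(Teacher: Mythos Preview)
Your proof is correct and follows essentially the same approach the paper has in mind: the paper states that Theorem~C is ``a relatively simple application of the Borel--Cantelli Lemma'' and, in \S\ref{2.1}, records the key measure estimate $|\mathcal{W}(\vv a,\ve)\cap P|_{mn}=(2\ve)^m$ for any unit cube $P$ (citing Sprind\v{z}uk), which is exactly your $|E_{\vv a}\cap[0,1)^{mn}|$ bound with the sharp constant $2^m$ in place of your $4^m$. The only difference is that you derive the one-row estimate by hand via the elementary $(|a_j|+1)/|a_j|\le 2$ argument rather than invoking the exact formula, which is perfectly adequate here since only convergence of the sum matters.
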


An immediate consequence of Theorem~\ref{thmC} is the following statement.

\begin{corollary}\label{cor1}
Let  $\Psi$ be as in Theorem~\ref{thmC}. Then, for almost every $\vv X \in \Mmn$ there exists a constant $\kappa (\vv X)  >0 $  such that
\begin{equation}\label{vb1}
 \|\vv X\vv a \| \ >  \  \kappa(\vv X)  \,   \Psi(\vv a)  \qquad   \forall  \   \  \vv  a \in \Z^n\setminus\{\vv0\}\,.
\end{equation}
\end{corollary}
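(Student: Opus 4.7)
The plan is to deduce the ``for all $\vv a$'' statement from the ``infinitely often'' content of Theorem~\ref{thmC} by applying that result to a sequence of rescaled approximating functions. For each $N\in\N$ set $\Psi_N(\vv a):=\Psi(\vv a)/N$. Then $\Sigma_{\Psi_N}=N^{-m}\Sigma_\Psi<\infty$, so Theorem~\ref{thmC} gives $|\cW_{m,n}(\Psi_N)|_{mn}=0$, and hence the countable union $\mathcal{E}:=\bigcup_{N\in\N}\cW_{m,n}(\Psi_N)$ is a Lebesgue null subset of $\Mmn$. For any $\vv X\notin\mathcal{E}$ and each $N$, the set
\[
B_N(\vv X):=\bigl\{\vv a\in\Z^n\setminus\{\vv 0\}:\|\vv X\vv a\|<\Psi(\vv a)/N\bigr\}
\]
is finite (otherwise $\vv X$ would belong to $\cW_{m,n}(\Psi_N)\subseteq\mathcal{E}$), and the family is nested, $B_1(\vv X)\supseteq B_2(\vv X)\supseteq\cdots$, since increasing $N$ makes the defining inequality stricter.

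The crux is to argue that this decreasing sequence of finite sets eventually becomes empty. Any $\vv a\in\bigcap_N B_N(\vv X)$ satisfies $\|\vv X\vv a\|=0$, i.e.\ $\vv X\vv a\in\Z^m$; and for each fixed $\vv a\neq\vv 0$ this confines $\vv X$ to a countable union of proper $(mn-m)$-dimensional affine subspaces of $\Mmn$. Consequently the exceptional set
\[
\mathcal{Z}:=\bigl\{\vv X\in\Mmn:\vv X\vv a\in\Z^m\text{ for some }\vv a\in\Z^n\setminus\{\vv 0\}\bigr\}
\]
has $mn$-dimensional Lebesgue measure zero. For every $\vv X$ outside the null set $\mathcal{E}\cup\mathcal{Z}$ one therefore has $\bigcap_N B_N(\vv X)=\emptyset$, and since the $B_N(\vv X)$ are nested and finite, their cardinalities $|B_N(\vv X)|$ form a non-increasing sequence of non-negative integers that must stabilise at $0$; hence $B_{N_0}(\vv X)=\emptyset$ for some $N_0=N_0(\vv X)\in\N$.

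Finally I set $\kappa(\vv X):=1/(2N_0)$. For $\vv a$ with $\Psi(\vv a)>0$, the fact that $\vv a\notin B_{N_0}(\vv X)$ gives $\|\vv X\vv a\|\geq\Psi(\vv a)/N_0>\kappa(\vv X)\Psi(\vv a)$, while for $\vv a$ with $\Psi(\vv a)=0$ the required inequality reduces to $\|\vv X\vv a\|>0$, which is precisely the content of $\vv X\notin\mathcal{Z}$. The only conceptual difficulty is the transition from an ``infinitely often'' statement to a uniform ``for all $\vv a$'' lower bound; this is handled by the nested-finite-sets structure above, combined with the null-set exclusion of $\mathcal{Z}$.
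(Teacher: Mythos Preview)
Your proof is correct. The paper does not actually prove Corollary~\ref{cor1}; it simply declares it an ``immediate consequence'' of Theorem~\ref{thmC}, and the intended argument is visible in the proof of Theorem~\ref{theorem_linear_forms}, where the identity $\bigcup_{\kappa>0}\mathcal{B}_{m,n}(\Psi,\kappa)=M_{m,n}\setminus\cW_{m,n}(\Psi)$ is asserted. Your route is essentially the same idea, just unpacked more carefully.

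Two remarks. First, the family $\{\Psi_N\}$ is redundant: since $\Psi_N\le\Psi$ one has $\cW_{m,n}(\Psi_N)\subseteq\cW_{m,n}(\Psi)$ and $B_N(\vv X)\subseteq B_1(\vv X)$ for every $N$, so a single application of Theorem~\ref{thmC} to $\Psi$ itself already gives $B_1(\vv X)$ finite, and the nested--finite--sets argument goes through unchanged. One can then simply take $\kappa(\vv X)=\tfrac12\min\bigl(1,\min_{\vv a\in B_1(\vv X)}\|\vv X\vv a\|/\Psi(\vv a)\bigr)$.

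Second, your explicit exclusion of the null set $\mathcal{Z}$ is a genuine improvement in rigour over the paper's treatment. The identity asserted in the proof of Theorem~\ref{theorem_linear_forms} is in fact not literally true: a matrix $\vv X\notin\cW_{m,n}(\Psi)$ for which $\vv X\vv a\in\Z^m$ for some $\vv a$ with $\Psi(\vv a)>0$ lies in neither side's complement correctly, and so one really does need to throw away $\mathcal{Z}$ to pass from ``finitely many bad $\vv a$'' to ``uniform lower bound for all $\vv a$''. Your proof handles this point cleanly.
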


In recent years estimates of this kind have become an important ingredient in the study of the achievable number of degrees of freedom in various schemes on Interference Alignment from electronics communication --- see, e.g., \cite{MOGMAK}. The applications typically require that $\kappa(\vv X)$ is independent of $\vv X$. Unfortunately, this is impossible to guarantee with probability 1, that is on a set of full Lebesgue measure. To demonstrate this claim, let us define the following set~:
\begin{equation} \label{def_B}
 \mathcal{B}_{m,n}(\Psi,\kappa):=\Big\{\vv X \in \Mmn : \|\vv X\vv a\|  > \kappa\Psi(\vv a)  \ \ \ \forall\  \vv a \in \Z^n\setminus\{\vv0\}\Big\}\,.
\end{equation}
Then, for any $\kappa$ and $\Psi$, the set $\mathcal{B}_{1,n}(\Psi,\kappa)$ will not contain
$$
[-\kappa\Psi(\vv a),\kappa\Psi(\vv a)]\times\R^{n-1}
$$
with $\vv a=(1,0,\dots,0)$. This set is of positive probability.
In the light of this example it becomes highly desirable to address the following problem~:

\medskip

\noindent\textbf{Problem.} \textit{Investigate the dependence between $\kappa$ and the probability of $\mathcal{B}_{m,n}(\Psi,\kappa)$.}

\medskip

As the first step to understanding this problem we obtain the following straightforward consequence of Theorem~C.

\renewcommand{\thetheorem}{\arabic{theorem}}
\setcounter{theorem}{0}

\begin{theorem} \label{theorem_linear_forms}
Let $m,n\in\N$ and $\mu$ be a probability measure on $\Mmn$ that is absolutely continuous with respect to Lebesgue measure on $\Mmn$. Let $\Psi:\Z^n\to\Rp$ be any function such that \eqref{S} converges. Then for any $\delta\in(0,1)$ there is a constant $\kappa>0$ depending only on $\mu$, $\Psi$ and $\delta$ such that
\begin{equation} \label{theorem_linear_forms_ie_statement}
\mu\left(\mathcal{B}_{m,n}(\Psi,\kappa)\right)\ge 1-\delta.
\end{equation}
\end{theorem}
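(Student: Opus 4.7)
The plan is to derive Theorem~\ref{theorem_linear_forms} as a direct consequence of Corollary~\ref{cor1} (equivalently Theorem~\ref{thmC}) by a standard continuity-of-measure argument, exploiting the monotone structure of the sets $\mathcal{B}_{m,n}(\Psi,\kappa)$ in the parameter $\kappa$.

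First I would observe that, by their very definition \eqref{def_B}, the sets $\mathcal{B}_{m,n}(\Psi,\kappa)$ form a nested family which is increasing as $\kappa\to 0^+$: if $0<\kappa'<\kappa$ then $\mathcal{B}_{m,n}(\Psi,\kappa)\subset\mathcal{B}_{m,n}(\Psi,\kappa')$. Consequently,
\begin{equation*}
\bigcup_{\kappa>0}\mathcal{B}_{m,n}(\Psi,\kappa) \;=\; \Big\{\vv X\in\Mmn : \exists\,\kappa(\vv X)>0 \text{ such that } \|\vv X\vv a\|>\kappa(\vv X)\Psi(\vv a)\ \forall\vv a\in\Z^n\setminus\{\vv0\}\Big\}.
\end{equation*}
By Corollary~\ref{cor1}, which applies since $\Sigma_\Psi<\infty$, the right-hand side has full Lebesgue measure in $\Mmn$. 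Since $\mu$ is absolutely continuous with respect to Lebesgue measure, this union also has full $\mu$-measure.

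Next I would invoke continuity of the probability measure $\mu$ from below along the increasing family. Taking any sequence $\kappa_k\downarrow 0$, the sets $\mathcal{B}_{m,n}(\Psi,\kappa_k)$ increase to $\bigcup_{\kappa>0}\mathcal{B}_{m,n}(\Psi,\kappa)$, so
\begin{equation*}
\lim_{k\to\infty}\mu\bigl(\mathcal{B}_{m,n}(\Psi,\kappa_k)\bigr) \;=\; \mu\Bigl(\bigcup_{\kappa>0}\mathcal{B}_{m,n}(\Psi,\kappa)\Bigr) \;=\; 1.
\end{equation*}
Given $\delta\in(0,1)$, one may therefore pick $k$ large enough that $\mu(\mathcal{B}_{m,n}(\Psi,\kappa_k))\geq 1-\delta$, and set $\kappa:=\kappa_k$. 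This gives \eqref{theorem_linear_forms_ie_statement}, with $\kappa$ depending only on $\mu$, $\Psi$ and $\delta$ as required.

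There is essentially no serious obstacle in this argument; the content of the theorem is really packaged inside Theorem~\ref{thmC}. The only subtlety worth flagging is the need for the absolute-continuity hypothesis on $\mu$: without it, the Lebesgue-null complement of $\bigcup_{\kappa>0}\mathcal{B}_{m,n}(\Psi,\kappa)$ could carry positive $\mu$-mass, and the conclusion would fail (as illustrated by the explicit example with $\vv a=(1,0,\dots,0)$ in the paragraph preceding the theorem, where concentrating $\mu$ on the bad slab would obstruct any uniform~$\kappa$).
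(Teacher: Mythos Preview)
Your argument is correct and is essentially the same as the paper's own proof: both observe that $\bigcup_{\kappa>0}\mathcal{B}_{m,n}(\Psi,\kappa)=M_{m,n}\setminus\cW_{m,n}(\Psi)$ has full Lebesgue (hence full $\mu$) measure by Theorem~\ref{thmC}/Corollary~\ref{cor1}, and then invoke continuity of measures. The only cosmetic difference is that you spell out the monotonicity and the sequence $\kappa_k\downarrow0$ explicitly, whereas the paper compresses this into a single sentence.
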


Prior to giving a proof of this theorem recall that a measure $\mu$ on $\Mmn$ is \emph{absolutely continuous with respect to Lebesgue measure} if there exists a Lebesgue integrable function $f:\Mmn\rightarrow\Rp$ such that for every Lebesgue measurable subset $A$ of $\Mmn$, one has that
\begin{equation}\label{vb2}
\mu(A)=\int_Af,
\end{equation}
where $\int_A{}f$ is the Lebesgue integral of $f$ over $A$. The function $f$ is often referred to as the \emph{distribution (or density) of $\mu$}.

\begin{proof}
Since $\mu$ is absolutely continuous with respect to Lebesgue measure,  Theorem~C implies that $\mu(\cW_{m,n}(\Psi))=~0$. Hence $\mu(M_{m,n}\setminus\cW_{m,n}(\Psi))=\mu(M_{m,n})=1$. Note that
$$
\bigcup_{\kappa>0}\mathcal{B}_{m,n}(\Psi,\kappa)=M_{m,n}\setminus\cW_{m,n}(\Psi)\,.
$$
Theorem~\ref{theorem_linear_forms} now  follows on using the continuity of measures. \end{proof}

In view of our previous discussion we have that $\kappa\to0$ as $\delta\to0$. Then, the above problem specialises to the explicit understanding of the dependence of $\kappa$ on $\delta$. This will be the main content of  the next section. Subsequent sections will be devoted to obtaining similar effective version of the convergence Khintchine--Groshev Theorem for non--degenerate submanifolds of $\R^n$.  This constitutes the main substance of the paper. The results are obtained by exploiting the techniques of Bernik, Kleinbock and Margulis \cite{Bernik-Kleinbock-Margulis-01:MR1829381} originating from the seminal work of Kleinbock and Margulis \cite{Kleinbock-Margulis-98:MR1652916} on the Baker--Sprind\v{z}uk conjecture.

\section{The theory for independent variables}

To begin with we give an alternative proof of Theorem~\ref{theorem_linear_forms} which introduces an explicit construction that will be utilized for quantifying the dependence of $\kappa$ on $\delta$. Indeed, in the case that $\mu$ is a uniform distribution on a unit cube the proof already identifies the required dependence.

\subsection{Theorem~\ref{theorem_linear_forms} revisited}\label{2.1}

By a unit cube in $\Mmn$ we will mean a subset of $\Mmn$ given by
$$
\big\{(x_{i,j})\in\Mmn:\alpha_{i,j}\le x_{i,j}<\alpha_{i,j}+1\big\}
$$
for some fixed matrix $(\alpha_{i,j})\in\Mmn$.
Given $\vv a\in\Z^n\setminus\{\vv0\}$ and $\ve>0$, let $\mathcal{W}(\vv a,\ve)$ denote the set of $\vv X\in\Mmn$ such that
\begin{equation} \label{theorem_linear_forms_reciproque_inequality}
 \|\vv X\vv a\| \ \le  \ve\,.
\end{equation}
It is easily seen that $\mathcal{W}(\vv a,\ve)$ is invariant under additive translations by an integer matrix; that is,
$$
\mathcal{W}(\vv a,\ve)+\vv B=\mathcal{W}(\vv a,\ve)
$$
for any $\vv B\in\Mmn(\Z)$, where $\Mmn(\Z)$ denotes the set of $m\times n$ matrices with integer entries. Furthermore, we have that
\begin{equation}\label{v103}
|\mathcal{W}(\vv a,\ve)\cap P|_{mn}=(2\ve)^m
\end{equation}
for any $0\le \ve\le\tfrac12$ and any unit cube $P$ in $\Mmn$. This follows, for example, from \cite[Chapter~1, Lemma~8]{Sprindzuk-1979-Metrical-theory}.
Then,  since
\begin{equation}\label{sv111} \Sigma_\Psi:=\sum_{\vv a\in\Z^n\setminus\{\vv 0\}}\Psi(\vv a)^m <\infty  \, \end{equation}
we must have that
\begin{equation}\label{v105}
M_\Psi:=\sup\{\Psi(\vv a):\vv a\in\Z^n\setminus\{\vv0\}\}<\infty.
\end{equation}
In what follows we will assume that
\begin{equation}\label{v104}
2\kappa M_\Psi\le 1\,.
\end{equation}
This condition ensures that we can apply \eqref{v103} with $\ve=\kappa\Psi(\vv a)$.

Fix a unit cube $P_{\vv0}$ in $\Mmn$ and for each $\Delta\in\Mmn(\Z)$, let
$$
P_\Delta:=P_{\vv0}+\Delta
$$
denote  the additive translation of $P_{\vv0}$ by $\Delta$. Clearly, $P_\Delta$ itself is a unit cube. Furthermore,
\begin{equation}\label{v111}
\Mmn=\bigcup_{\Delta\in\Mmn(\Z)}^{\circ} P_\Delta \, .
\end{equation}
Note that the union is disjoint. Using \eqref{v103} and the fact that
$$
\Mmn\setminus \mathcal{B}_{m,n}(\Psi,\kappa)=\bigcup_{\vv a\in\Z^n\setminus\{\vv 0\}}\mathcal{W}(\vv a,\kappa\Psi(\vv a))  \, ,
$$
we obtain that for each $\Delta\in\Mmn(\Z)$,
\begin{eqnarray}\label{v100}
|P_{\Delta}\setminus \mathcal{B}_{m,n}(\Psi,\kappa)|_{mn} & \le  &  \sum_{\vv a\in\Z^n\setminus\{\vv 0\}}|\mathcal{W}(\vv a,\kappa\Psi(\vv a))\cap P_{\Delta}|_{mn}   \nonumber  \\[2ex]
& =  &  \sum_{\vv a\in\Z^n\setminus\{\vv 0\}}(2\kappa\Psi(\vv a))^m
\ =   \ (2\kappa)^m\Sigma_\Psi\,.
\end{eqnarray}
Since $\mu$ is a probability measure, it follows from \eqref{v111} that there exists a finite subset $A\subset\Mmn(\Z)$ such that
\begin{equation}\label{v106}
\mu\left(\bigcup_{\Delta\in A} P_{\Delta}\right)>1-\delta/2\,.
\end{equation}
Let $N=\#A$ be the number of elements in $A$. Since $\mu$ is absolutely continuous with respect to Lebesgue measure, for every $\Delta\in A$ and any $\ve_1>0$, there exists $\ve_2$ such that for any measurable subset $X$ of $P_\Delta$,
\begin{equation}\label{v101}
|X|_{mn}<\ve_2\quad\Rightarrow\quad \mu(X)<\ve_1\,.
\end{equation}
In view of \eqref{v100}, applying \eqref{v101} to $X=P_{\Delta}\setminus \mathcal{B}_{m,n}(\Psi,\kappa)$ and $\ve_1=\delta/(2N)$ implies the existence of
$$
\ve_2=\ve_2(\Delta,\delta,N)>0
$$
such that
\begin{equation}\label{v102}
\mu(P_{\Delta}\setminus \mathcal{B}_{m,n}(\Psi,\kappa))<\delta/(2N)\qquad\text{if}\qquad(2\kappa)^m\Sigma_\Psi\le \ve_2(\Delta,\delta,N)\,.
\end{equation}
In particular, the second inequality in~\eqref{v102} holds if
$$
\kappa\le \kappa_\Delta:=\frac12\left(\frac{\ve_2(\Delta,\delta,N)}{\Sigma_\Psi}\right)^{1/m}\,.
$$
Since $A$ is finite, there exists  $\kappa$ satisfying \eqref{v104} and
$$
0<\kappa\le \min_{\Delta\in A} \kappa_\Delta\,.
$$
Clearly, for such a choice of $\kappa$ the first inequality in~\eqref{v102} holds for any $\Delta\in A$. Hence, by \eqref{v106} and the additivity of $\mu$ we obtain  that
\begin{eqnarray*}
\mu(\Mmn\setminus \mathcal{B}_{m,n}(\Psi,\kappa))  & \le  &  \frac\delta2+
\sum_{\Delta\in A}\mu(P_{\Delta}\setminus \mathcal{B}_{m,n}(\Psi,\kappa))
\\[1ex]
& \le  &  \frac\delta2+
\sum_{\Delta\in A}\frac\delta{2N}  \ =  \ \frac\delta2+N\frac\delta{2N} \ =\ \delta\,.
\end{eqnarray*}
The upshot of this is that
\begin{equation}\label{v108}
\mu(\mathcal{B}_{m,n}(\Psi,\kappa)) = 1-\mu(\Mmn\setminus \mathcal{B}_{m,n}(\Psi,\kappa))\ge1-\delta\, ,
\end{equation}
which completes the proof of Theorem~\ref{theorem_linear_forms}.

\medskip

\subsection{Quantifying the dependence of $\kappa$ on $\delta$ \label{explicitind}}

We now turn our attention to quantifying the dependence of $\kappa$ on $\delta$ within the context of Theorem  \ref{theorem_linear_forms}.  To this end, we will make use of the  $L_p$ norm.  Given a Lebesgue measurable function $f:\Mmn\rightarrow\Rp$, a measurable subset $X$ of $\Mmn$ and $p\ge1$, we write $f\in L_p(X)$ if the Lebesgue integral
$$
\int_X |f|^p:=\int_{\Mmn}|f|^p\chi_X
$$
exists and is finite.  Here $\chi_X$ is the characteristic function of $X$. For  $f\in L_p(X)$, the $L_p$ norm of $f$ on $X$ is defined by
$$
\|f\|_{p,X} \, :=  \, \left(\int_X |f|^p\right)^{1/p}\,.
$$
In the case $p=\infty$, the $L_\infty$--norm on $X$ is defined as the essential maximum of $|f|$ on $X$; that is,
\[
\|f\|_{\infty,X}:=\inf\left\{c\in\R: |f(x)|\le c\text{ for almost all }x\in X\right\}\,.
\]
If $\|f\|_{\infty,X}<\infty$,  then we write $f\in L_\infty(X)$. For example, if $f$ is continuous and $X$ is a non--empty open subset of $\Mmn$, then $\|f\|_{\infty,X}$ is simply the supremum of $f$ on $X$.  The following lemma gathers together two well know facts regarding the  $L_p$ norm.

\begin{lemma}\label{l1}\ \\
\begin{enumerate}
  \item ~~~For any $p\ge1$ and any measurable subsets $X\subset Y$,  $$\|f\|_{p,X}\le \|f\|_{p,Y}\,.$$ \\[-4ex]
  \item ~~~$($H\"older's inequality$)$\,  For any $1\le p,q\le\infty$ satisfying $\frac1p+\frac1q=1$,
  $$
  \left|\int_Xfg\right|\le\|f\|_{p,X}\|g\|_{q,X}\,.
  $$
\end{enumerate}
\end{lemma}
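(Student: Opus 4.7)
Both parts are standard and the plan is to give short self-contained arguments rather than invoke a general reference. For (1), when $1 \le p < \infty$ the containment $X \subset Y$ gives $\chi_X \le \chi_Y$ pointwise, hence $|f|^p \chi_X \le |f|^p \chi_Y$; integrating over $\Mmn$ and extracting $p$-th roots yields the claim. For $p = \infty$, any constant $c$ with $|f(x)| \le c$ almost everywhere on $Y$ automatically satisfies $|f(x)| \le c$ almost everywhere on $X$, so the infimum defining $\|f\|_{\infty,X}$ is taken over a set containing the one defining $\|f\|_{\infty,Y}$, giving $\|f\|_{\infty,X} \le \|f\|_{\infty,Y}$.

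For (2) my plan is the classical Young-inequality route combined with a normalisation trick. First I would dispose of the trivial cases: if $\|f\|_{p,X} = 0$ or $\|g\|_{q,X} = 0$ then $fg$ vanishes almost everywhere on $X$ and both sides are zero, and if either norm equals $+\infty$ the inequality is vacuous. Assume then that both norms lie in $(0,\infty)$ and that $1 < p,q < \infty$. Setting $\tilde f = f/\|f\|_{p,X}$ and $\tilde g = g/\|g\|_{q,X}$, I would apply Young's inequality $ab \le a^{p}/p + b^{q}/q$ pointwise to $(|\tilde f(x)|, |\tilde g(x)|)$ and integrate over $X$; since $\int_X |\tilde f|^{p} = \int_X |\tilde g|^{q} = 1$ this produces $\int_X |\tilde f \tilde g| \le 1/p + 1/q = 1$. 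Multiplying through by $\|f\|_{p,X}\|g\|_{q,X}$ and using $|\int_X fg| \le \int_X |fg|$ gives the desired estimate.

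The endpoint cases $\{p,q\} = \{1,\infty\}$ have to be argued separately, but only briefly: almost everywhere on $X$ one has $|f(x)g(x)| \le \|g\|_{\infty,X} |f(x)|$ (and symmetrically), and integrating this over $X$ already produces the bound. There is no genuinely subtle step; the only care needed is the bookkeeping around zero and infinite norms and the separation of the endpoint case from the main Young-inequality argument, so I would expect the proof in the paper to consist of a remark that both statements are classical, perhaps with a reference, rather than a full derivation.
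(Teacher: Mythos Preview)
Your proof is correct, and your final expectation is exactly what happens: the paper gives no proof at all of this lemma, merely introducing it as ``two well know[n] facts regarding the $L_p$ norm.'' Your self-contained arguments via monotonicity of $\chi_X$ and the Young-inequality normalisation are the standard ones and go beyond what the paper provides.
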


\vspace*{3ex}

 The next lemma is  a corollary of Lemma  \ref{l1}.

\begin{lemma} \label{lemma_passage_Lebesgue_to_nu_via_v}
Let $p>1$ and $\mu$ be a probability measure on $\Mmn$ with density $f$. Let $X$ be a Lebesgue measurable subset of $\Mmn$. If $f\in L_p(X)$,  then
$$
\mu(X)\le \|f\|_{p,X}|X|_{mn}^{1-1/p}\,.
$$
\end{lemma}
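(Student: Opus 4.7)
The plan is to apply Hölder's inequality (part (2) of Lemma~\ref{l1}) with the natural choice of test function $g \equiv 1$. Since $\mu$ has density $f$, the definition \eqref{vb2} gives
$$
\mu(X) \;=\; \int_X f \;=\; \int_X f \cdot 1.
$$
Because $f$ is a density and $\mu$ is a probability measure, $f \ge 0$ almost everywhere, so there is no issue in replacing $f$ by $|f|$ inside the integral.

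Next I would let $q = p/(p-1) \in [1,\infty)$ be the Hölder conjugate of $p$, so that $\tfrac{1}{p} + \tfrac{1}{q} = 1$. Applying Hölder's inequality to the product $f \cdot 1$ on $X$ yields
$$
\mu(X) \;=\; \left|\int_X f \cdot 1\right| \;\le\; \|f\|_{p,X}\, \|1\|_{q,X}.
$$
The hypothesis $f \in L_p(X)$ ensures that the first factor is finite. The second factor is computed directly from the definition of the $L_q$ norm:
$$
\|1\|_{q,X} \;=\; \left(\int_X 1^{\,q}\right)^{1/q} \;=\; |X|_{mn}^{1/q} \;=\; |X|_{mn}^{1-1/p}.
$$
Combining the two displays gives precisely the claimed bound $\mu(X) \le \|f\|_{p,X}\, |X|_{mn}^{1-1/p}$.

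There is no real obstacle here: the lemma is a one-line corollary of Hölder's inequality once one selects $g \equiv 1$ and identifies $\|1\|_{q,X}$ with a power of the Lebesgue measure of $X$. The only minor point worth checking is the case $p = \infty$, which is excluded by the assumption $p > 1$ but would in any event follow from the essential-supremum definition together with $\mu(X) \le \|f\|_{\infty,X}\,|X|_{mn}$; so the stated range $p > 1$ (equivalently $1 < q < \infty$) is exactly the regime in which the Hölder estimate above is meaningful and sharpest.
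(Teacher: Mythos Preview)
Your proof is correct and follows essentially the same route as the paper: express $\mu(X)=\int_X f\cdot 1$, apply H\"older's inequality with conjugate exponent $q=p/(p-1)$, and identify $\|1\|_{q,X}=|X|_{mn}^{1-1/p}$. The additional remarks on $f\ge0$ and the $p=\infty$ case are fine but not needed for the statement as given.
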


\begin{proof}
By definition, we have that
$$
\mu(X)=\int_X f \,.
$$
Define $q$ by the equation $\tfrac1p+\tfrac1q=1$. Then by H\"older's inequality, we have that
$$
\mu(X)=\int_X f\times 1  \ \le  \ \|f\|_{p,X}\|1\|_{q,X}  \ =\  \|f\|_{p,X}\left(\int_X1^q \right)^{1/q}
\ \le  \  \|f\|_{p,X}|X|_{mn}^{1-1/p}
$$
as required.
\end{proof}

We are now in the position to provide an effective version of Theorem~\ref{theorem_linear_forms}.
Let $P_{\vv0}$ and $A$ be the same as in \S\ref{2.1}. In particular, assume that \eqref{v106} holds. Furthermore,  assume  that there exists  some $p>1$ such that for every $\Delta\in A$,  the density $f$ of $\mu$ has finite $L_p$ norm on $P_\Delta$ .

\noindent Let $\kappa$ be such that \eqref{v104} is satisfied. In this case, \eqref{v100} holds for every $P_\Delta$ with $\Delta\in A$. By Lemmas~\ref{l1} and \ref{lemma_passage_Lebesgue_to_nu_via_v},
$$
\mu(P_{\Delta}\setminus \mathcal{B}_{m,n}(\Psi,\kappa))\le\|f\|_{p,P_\Delta}\cdot|P_{\Delta}\setminus \mathcal{B}_{m,n}(\Psi,\kappa)|_{mn}^{1-1/p}
\,.
$$
Using \eqref{v100},  we obtain that
\begin{equation}\label{v107}
\mu(P_{\Delta}\setminus \mathcal{B}_{m,n}(\Psi,\kappa))\le\|f\|_{p,P_\Delta}\cdot\Big((2\kappa)^m\Sigma_\Psi\Big)^{1-1/p}
\,
\end{equation}
where $\Sigma_\Psi$ is given by \eqref{sv111}.
It follows that
\begin{eqnarray*}
\mu(\Mmn\setminus \mathcal{B}_{m,n}(\Psi,\kappa))  & \le  &  \frac\delta2+
\sum_{\Delta\in A}\mu(P_{\Delta}\setminus \mathcal{B}_{m,n}(\Psi,\kappa))  \\[2ex]
& \le & \frac\delta2+
\Big((2\kappa)^m\Sigma_\Psi\Big)^{1-1/p}  \Sigma_f  \ \le  \  \delta
\end{eqnarray*}
if
$$
\kappa\le \frac12\left(\Sigma_\Psi^{-1}\left(\frac{\delta}{2\Sigma_f}\right)^{\frac p{p-1}}\right)^{1/m}\,,
$$
where
\begin{equation}\label{vb300}
\Sigma_f:=\sum_{\Delta\in A}\|f\|_{p,P_\Delta} \, .
\end{equation}
Since $A$ is finite, the quantity  $\Sigma_f$ is also  finite.
The upshot of the above discussion is the following statement.

\begin{theorem}[Effective version of Theorem~\ref{theorem_linear_forms}] \label{theorem_linear_forms+}
Let $m,n\in\N$, $\mu$, $\Psi$ be as in Theorem~\ref{theorem_linear_forms}, let $M_\Psi$ be given by \eqref{v105} and let $f$ denote the density of $\mu$. Furthermore,  let $P_{\vv0}$ be any unit cube in $\Mmn$ and $A$ be any finite subset of $\Mmn(\Z)$ satisfying \eqref{v106}. Assume there exists  $p>1$ such that  $f\in L_p(P_\Delta)$ for any $\Delta\in A$ and also assume that the quantity $\Sigma_f$ is given by \eqref{vb300}.
Then, for any $\delta\in(0,1)$, inequality~\eqref{theorem_linear_forms_ie_statement} holds with
\begin{equation} \label{theorem_linear_forms_def_kappa_one}
\kappa:=\frac12\min\left\{\frac{1}{M_\Psi},\ \left(\Sigma_\Psi^{-1}\left(\frac{\delta}{2\Sigma_f}\right)^{\frac p{p-1}}\right)^{1/m}
\right\}\,.
\end{equation}
In this formula, the quotient $p/(p-1)$ should be taken as equal to 1 when $p=\infty$.
\end{theorem}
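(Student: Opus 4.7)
The plan is to make the qualitative ``continuity of $\mu$'' step from the proof of Theorem~\ref{theorem_linear_forms} quantitative, by feeding the hypothesis $f\in L_p(P_\Delta)$ through Lemma~\ref{lemma_passage_Lebesgue_to_nu_via_v}. The entire set-theoretic skeleton from \S\ref{2.1}---the tiling \eqref{v111} of $\Mmn$ by unit cubes $P_\Delta$, the identification $\Mmn\setminus\mathcal{B}_{m,n}(\Psi,\kappa)=\bigcup_{\vv a\ne\vv0}\mathcal{W}(\vv a,\kappa\Psi(\vv a))$, the translation-invariance of each $\mathcal{W}(\vv a,\ve)$ modulo $\Mmn(\Z)$, and the volume formula \eqref{v103}---is already in place, and I would reuse it verbatim.

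First, I would restrict attention to $\kappa$ satisfying $2\kappa M_\Psi\le 1$, i.e.\ condition \eqref{v104}, which is exactly what is needed for \eqref{v103} to apply with $\ve=\kappa\Psi(\vv a)$ for every $\vv a\in\Z^n\setminus\{\vv0\}$. Then for each $\Delta\in A$, summing \eqref{v103} over $\vv a\in\Z^n\setminus\{\vv0\}$ as in \eqref{v100} yields the Lebesgue volume bound
\begin{equation*}
|P_\Delta\setminus\mathcal{B}_{m,n}(\Psi,\kappa)|_{mn}\le(2\kappa)^m\Sigma_\Psi.
\end{equation*}
Applying Lemma~\ref{lemma_passage_Lebesgue_to_nu_via_v} to $X=P_\Delta\setminus\mathcal{B}_{m,n}(\Psi,\kappa)\subset P_\Delta$ and using Lemma~\ref{l1}(i) to replace $\|f\|_{p,X}$ by $\|f\|_{p,P_\Delta}$, I obtain
\begin{equation*}
\mu\bigl(P_\Delta\setminus\mathcal{B}_{m,n}(\Psi,\kappa)\bigr)\le\|f\|_{p,P_\Delta}\,\bigl((2\kappa)^m\Sigma_\Psi\bigr)^{1-1/p}.
\end{equation*}

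Next, I would sum this estimate over $\Delta\in A$ and combine it with \eqref{v106} to conclude
\begin{equation*}
\mu\bigl(\Mmn\setminus\mathcal{B}_{m,n}(\Psi,\kappa)\bigr)\le\tfrac{\delta}{2}+\Sigma_f\,\bigl((2\kappa)^m\Sigma_\Psi\bigr)^{1-1/p}.
\end{equation*}
Requiring the right-hand side to be $\le\delta$ reduces to a single elementary scalar inequality in $\kappa$, namely $\Sigma_f((2\kappa)^m\Sigma_\Psi)^{1-1/p}\le\delta/2$, which I would solve explicitly for $\kappa$. Combining the resulting threshold via a minimum with the constraint $2\kappa M_\Psi\le1$ yields exactly formula~\eqref{theorem_linear_forms_def_kappa_one}.

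I do not anticipate any real obstacle: the geometric work has been done in \S\ref{2.1} and the analytic upgrade is a single application of Lemma~\ref{lemma_passage_Lebesgue_to_nu_via_v}. The only point deserving a brief comment is the limiting case $p=\infty$, where the quotient $p/(p-1)$ must be interpreted as $1$; there Lemma~\ref{lemma_passage_Lebesgue_to_nu_via_v} degenerates to the trivial estimate $\mu(X)\le\|f\|_{\infty,X}|X|_{mn}$ (H\"older with $q=1$), and both the displayed inequality above and the formula for $\kappa$ remain correct under this convention, so the theorem is covered uniformly for all $p\in(1,\infty]$.
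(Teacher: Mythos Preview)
Your proposal is correct and follows essentially the same approach as the paper: the argument in \S\ref{explicitind} uses exactly the Lebesgue bound \eqref{v100}, feeds it through Lemma~\ref{lemma_passage_Lebesgue_to_nu_via_v} together with Lemma~\ref{l1}(i), sums over $\Delta\in A$, and combines with \eqref{v106} to obtain the scalar inequality you solve. Your remark on the $p=\infty$ case is also consistent with the paper's stated convention.
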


\begin{remark}
In the case when $\Psi$ is even, that is $\Psi(-\vv a)=\Psi(\vv a)$ for all $\vv a\in\Z^n\setminus\{\vv0\}$, one can improve formula \eqref{theorem_linear_forms_def_kappa_one} for $\kappa$ by replacing $\Sigma_\Psi$ with $\tfrac12\Sigma_\Psi$. This is an obvious consequence of the fact that in this case the sets $\mathcal{W}(\vv a,\kappa\Psi(\vv a))$ and $\mathcal{W}(-\vv a,\kappa\Psi(-\vv a))$ coincide and therefore do not have to be counted twice within the proof.
\end{remark}

There are various simplifications and specialisations of Theorem~\ref{theorem_linear_forms+} when we have extra information regarding the measure $\mu$. The following is a natural corollary which is particularly relevant for probability measures $\mu$ with bounded distribution $f$ and mean value about the  origin.

\begin{corollary}\label{cor2}
Let $m,n\in\N$, $\mu$, $\Psi$, $M_\Psi$ be  as in Theorem~\ref{theorem_linear_forms} and let the density $f$ of $\mu$ be bounded above by a constant $K>0$. Furthermore,  let $T$ be the smallest positive integer such that
\begin{equation} \label{theorem_one_defN}
\mu\left([-T,T)^{mn}\right)\ge 1-\delta/2.
\end{equation}
Then,  for any $\delta\in(0,1)$, inequality \eqref{theorem_linear_forms_ie_statement} holds with
\begin{equation}\label{v110}
\kappa:=\frac12\min\left\{\frac{1}{M_\Psi},\ \left(\frac{\delta}{2K(2T)^{mn}\Sigma_\Psi}\right)^{1/m}
\right\}\,.
\end{equation}
\end{corollary}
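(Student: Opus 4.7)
The plan is to derive Corollary~\ref{cor2} as a direct specialization of Theorem~\ref{theorem_linear_forms+} by exploiting the uniform bound $f \le K$ together with the concentration hypothesis \eqref{theorem_one_defN}. Choosing $p = \infty$ in the effective theorem is what makes the argument clean: the exponent $p/(p-1)$ collapses to $1$, and the $L_\infty$ norms of $f$ on unit cubes are all controlled by $K$.

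First I would take $P_{\vv0} := [0,1)^{mn}$ (after identifying $\Mmn$ with $\R^{mn}$) and set
\[
A := \big\{\,\Delta\in\Mmn(\Z) : \text{every entry of }\Delta\text{ lies in }\{-T,-T+1,\dots,T-1\}\,\big\}.
\]
Then $\#A=(2T)^{mn}$ and the translates $P_\Delta$, $\Delta\in A$, form a disjoint partition of the cube $[-T,T)^{mn}$. By hypothesis \eqref{theorem_one_defN},
\[
\mu\!\left(\bigcup_{\Delta\in A}P_\Delta\right)\;=\;\mu\!\left([-T,T)^{mn}\right)\;\ge\;1-\delta/2,
\]
which is exactly the requirement \eqref{v106} of Theorem~\ref{theorem_linear_forms+}.

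Next, I would apply Theorem~\ref{theorem_linear_forms+} with $p=\infty$. The pointwise bound $f\le K$ gives $\|f\|_{\infty,P_\Delta}\le K$ for every $\Delta\in A$, so
\[
\Sigma_f \;=\; \sum_{\Delta\in A}\|f\|_{\infty,P_\Delta}\;\le\; K\,(2T)^{mn}.
\]
Substituting this upper bound into \eqref{theorem_linear_forms_def_kappa_one} and using the convention $p/(p-1)=1$ yields a permissible value of $\kappa$ that is at least the quantity displayed in \eqref{v110}.

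The one small observation needed to close the argument is that $\mathcal{B}_{m,n}(\Psi,\kappa)$ is monotonically decreasing in $\kappa$ (a smaller threshold makes the inequality $\|\vv X\vv a\|>\kappa\Psi(\vv a)$ easier to satisfy), so shrinking the admissible $\kappa$ from the value produced by Theorem~\ref{theorem_linear_forms+} down to the (possibly smaller) expression \eqref{v110} only enlarges $\mathcal{B}_{m,n}(\Psi,\kappa)$ and hence preserves \eqref{theorem_linear_forms_ie_statement}. I do not anticipate any genuine obstacle: the corollary is essentially a convenience repackaging of Theorem~\ref{theorem_linear_forms+} under the two extra hypotheses on $\mu$, with all the real work (bounding $|P_\Delta\setminus\mathcal{B}_{m,n}(\Psi,\kappa)|_{mn}$ and invoking H\"older's inequality) already done upstream.
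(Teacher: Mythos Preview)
Your proposal is correct and follows exactly the same approach as the paper: apply Theorem~\ref{theorem_linear_forms+} with $p=\infty$, tile $[-T,T)^{mn}$ by $(2T)^{mn}$ unit cubes to form $A$, and use the uniform bound $f\le K$ to get $\Sigma_f\le K(2T)^{mn}$. You have simply been more explicit about the choice of $P_{\vv0}$ and the monotonicity of $\mathcal{B}_{m,n}(\Psi,\kappa)$ in $\kappa$, details the paper leaves implicit.
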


\begin{proof}
With respect to  Theorem~\ref{theorem_linear_forms+},  let  $p=\infty$ and $A$ be the collection of cubes $P_{\Delta}$ that  exactly tiles $[-T,T)^{mn}$. Then, $\#A=(2T)^{mn}$ and  thus
$
\Sigma_f\le K(2T)^{mn}
$.
Now,   \eqref{v110} trivially  follows from \eqref{theorem_linear_forms_def_kappa_one}.
\end{proof}

\subsection{Numerical examples}

In what follows, we will use the standard Gaussian error function
\[
\erf(x):=\frac{1}{\sqrt{2\pi}}\int_{-\infty}^{x}\e^{-t^2/2}\dd t\,.
\]
It is readily verified that the function  $\erf$ is continuous, strictly increasing and that
\begin{align*}
\lim_{x\rightarrow-\infty}\erf(x)=0\qquad\text{and}\qquad
\lim_{x\rightarrow+\infty}\erf(x)=1.
\end{align*}
As usual, for $0<y<1$, define $\erfi(y)$ to be the unique value $x\in\R$ such that $\erf(x)=y$.
Furthermore, define formally $\erfi(0):=-\infty$ and $\erfi(1):=+\infty$.

Consider now  Corollary~\ref{cor2} in the case when $m=n=1$ and when $\mu$ follows the standard Gaussian distribution $\cN(0,1)$. It can then be verified that  Corollary~\ref{cor2} implies that inequality~\eqref{theorem_linear_forms_ie_statement} holds with
\begin{equation} \label{theorem_one_example_one_kappa_one}
\kappa=\frac{\delta\sqrt{2\pi}}{8\cdot N\cdot \Sigma_\Psi}  \, ,
\end{equation}
where $N:=\lceil\erfi\left(1-\delta/4\right)\rceil$. Here $\lceil x\rceil$ is the ``ceiling'' of $x$, that is the smallest integer that is bigger than or equal to $x\in\R$.  We now consider explicit  approximating functions. First, let  $\Psi$ be the function given by $\Psi(q)=0$ if $q\leq 0$, $$\Psi(q):=\frac{1}{2 q \cdot\log^2q}   \quad  {\rm if} \quad   q \ge 2 \quad  {\rm and } \quad  \Psi(1):=1/2  \, . $$   Then $\Sigma_\Psi<1.555$ and on making use of
 \eqref{theorem_one_example_one_kappa_one}  we obtain the following table for  values of  $N$ and $\kappa$~:

\begin{center}
\begin{tabular}{|c|c|c|c|c|c|c|}
\hline
$\delta$ & $0.5$ & $0.25$ & $0.1$ & $0.01$ & $10^{-3}$ & $10^{-5}$\\
\hline
$N$ & 2 & 2 & 3 & 4 & 4 & 5\\ \hline
$\kappa$ & $0.05$ & $0.025$ & $0.0067$ & $5\cdot 10^{-4}$ & $5\cdot 10^{-5}$ & $4\cdot 10^{-7}$ \\ \hline
\end{tabular}
\end{center}
It follows for instance from this set of data  that for $99\%$ of the values of the random variable $x$ with normal distribution $\mathcal{N}(0,1)$, one has that
\[
 \|q x \| \ >  \ \frac{1}{2000}\cdot\Psi(q)  \qquad {\rm \ for \ all \ }   \   q \in \N.
\]

In the next example, we fix a $Q\in\N$ and consider the approximating function $\Psi$ given by
$$
\Psi(q):=\begin{cases}
\frac{1}{Q} & \text{ if } 1\le q\le Q,\\
0 & \text{ otherwise}.
\end{cases}
$$
Then $\Sigma_\Psi=1$ and one can readily verify that
\begin{enumerate}
\item[(i)] for at least $75\%$ of the values of the random variable $x$ with normal distribution $\mathcal{N}(0,1)$, one has that
\[
\| qx \| > \frac{1}{13 Q}  \qquad {\rm for \ all \ }   \   q \in [-Q,Q],\, q\neq 0,
\] \\[-4ex]
\item[(ii)] for at least $90\%$ of the values of the random variable $x$ with normal distribution $\mathcal{N}(0,1)$, one has that
$$
\| qx \| > \frac{1}{50 Q}  \qquad {\rm for \ all \ }   \   q \in [-Q,Q],\, q\neq 0 .
$$
\end{enumerate}

\hidden{
\begin{remark}
The values of $\kappa$ given in the above example are quite far from being optimal. 
As Theorem~\ref{theorem_linear_forms+} holds for quite general probability distributions, including some rather exotic distributions with masses concentrated heavily around rational points, the estimates used in its proof can hardly be expected to be the best possible when applied to ``tamer''  distributions such as $\cN(0,1)$. For example, if we assume that $\mu=\cN(0,1)$ then
$\kappa=0.012$ is enough to ensure ~\eqref{theorem_linear_forms_ie_statement} holds with $\delta=0.1$, and $\kappa=0.0012$ ensures it 
holds with $\delta=0.01$.

Obtaining constants which are optimal for all the distributions under consideration would require very precise control over the distribution of the probability measure and this in turn would make the formulae for $\kappa $ much more complicated. On the other hand, it seems that in most cases which arise in practice, the algorithmic approach given is easily applicable and, using the ideas applied in our proofs gives essentially exact values for the constant $\kappa$.
\end{remark}

\begin{remark}
If the function $\psi$ in the statement of Theorem~\ref{theorem_linear_forms+} is assumed to be monotonically decreasing, one can slightly refine the theorem in the case $m=1$. More precisely, the values of $\kappa$ can be increased slightly if the definition of $\Sigma_\Psi$ in~\eqref{theorem_one_def_S} being changed to~:
\begin{equation}
\Sigma_\psi:=\sum_{n \in \N} \frac{\phi(n)}{n}\cdot\psi(n),
\end{equation}
where $\phi$ is Euler's $\phi$-function.
However, this increase in the value of $\kappa$ is not significant for the examples we have in mind, and we leave this improvement as an exercise for an interested reader.
\end{remark}
}

\section{Diophantine approximation on manifolds}

The aim is to establish an analogue of Theorem~\ref{theorem_linear_forms+} for submanifolds $\cM$ of $\R^n$. More precisely, we consider the set $\cB_{n}(\Psi,\kappa)\cap\cM$, where
 $$
 \cB_{n}(\Psi,\kappa):=\cB_{1,n}(\Psi,\kappa)\,.
 $$
 The fact that the points  of interest are of dependent variables, reflecting the fact that they lie on $ \cM$,
introduces major difficulties in attempting to describe the measure
theoretic structure of $\cB_{n}(\Psi,\kappa)\cap\cM$.

\medskip

\noindent{\em Non--degenerate manifolds. } In order to make any
reasonable progress with the above problems it is not unreasonable
to assume that the manifolds $ \cM$ under consideration are
{\em non--degenerate}. Essentially, these are smooth
submanifolds of $\R^n$ which are sufficiently curved so as to
deviate from any hyperplane. Formally, a  manifold $\cM$ of
dimension $d$ embedded in $\R^n$ is said to be \emph{non--degenerate} if it
arises from a non--degenerate map $\vv f: \vv U\to \R^n$ where $\vv U$ is an
open subset of $\R^d$ and $\cM:=\vv f(\vv U)$. The map $\vv f: \vv U\to
\R^n,\vv x\mapsto \vv f(\vv x)=(f_1(\vv x),\dots,f_n(\vv x))$ is said to be
\emph{$l$--non--degenerate at} $\vv x\in \vv U$, where $l\in\N$,
if $\vv f$ is $l$ times continuously differentiable on some
sufficiently small ball centred at $\vv x$ and the partial derivatives
of $\vv f$ at $\vv x$ of orders up to $l$ span $\R^n$. The map $\vv f$ is
\emph{non--degenerate} at $\vv x$ if it is $l$--non--degenerate at $\vv x$ for some $l\in\N$. As is well known, any
real connected analytic manifold not contained in any hyperplane of
$\R^n$ is non--degenerate at every point \cite{Kleinbock-Margulis-98:MR1652916}.

Observe that if the dimension of the manifold $\cM$ is strictly less than $n$
then we have that $|\cB_n(\Psi,\kappa)\cap\cM|_n=0$ irrespective of the
approximating function $\Psi$ and $\kappa$. Thus, when referring to the Lebesgue
measure of the set $ \cB_n(\Psi,\kappa)\cap\cM $ it is always  with
reference to the induced Lebesgue measure on $\cM$. More
generally, given a subset $S$ of $\cM$ we shall write
$|S|_{\cM} $ for the  measure of $S$ with respect to the induced
Lebesgue measure on $\cM$. Without loss of generality, we will assume that $|\cM|_\cM=1$ as otherwise the measure can be re--normalized accordingly.

The following statement is a straightforward  consequence of the main result of Bernik, Kleinbock and Margulis in~\cite{Bernik-Kleinbock-Margulis-01:MR1829381}.

\begin{theoremBKM}
Let $\cM$ be a non--degenerate submanifold of $\R^n$. Let $\Psi:\Z^n\to\Rp$ be monotonically decreasing in each variable and such that
\begin{equation} \label{def_S_Psi}
\Sigma_\Psi:=\sum_{\vv q\in\Z^n\setminus\{0\}}\Psi(\vv q)<\infty\,.
\end{equation}
Then, for any $\delta\in(0,1)$, there is a constant $\kappa>0$ depending on $\cM$, $\Sigma_\Psi$ and $\delta$ only such that

\begin{equation} \label{theorem_linear_forms_ie_statement_v2}
|\cB_n(\Psi,\kappa)\cap\cM|_\cM\ge 1-\delta.
\end{equation}
\end{theoremBKM}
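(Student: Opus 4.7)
My plan is to derive Theorem~BKM from the main theorem of \cite{Bernik-Kleinbock-Margulis-01:MR1829381} by essentially the same continuity-of-measures manoeuvre that was used to deduce Theorem~\ref{theorem_linear_forms} from Theorem~C. Since $\cM$ is non-degenerate, $\Psi$ is monotone in each variable, and $\Sigma_\Psi<\infty$, the hypotheses of the BKM convergence theorem are satisfied and its conclusion yields
\begin{equation*}
|\cW_{1,n}(\Psi)\cap\cM|_\cM=0.
\end{equation*}

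Next I would unfold the definitions of $\cW_{1,n}(\Psi)$ and of $\cB_n(\Psi,\kappa)=\cB_{1,n}(\Psi,\kappa)$ to observe the set-theoretic identity, valid up to a set of $\cM$-measure zero (a $\cM$-null set of rational directions in $\cM$),
\begin{equation*}
\cM\setminus\cW_{1,n}(\Psi)\;=\;\bigcup_{\kappa>0}\bigl(\cB_n(\Psi,\kappa)\cap\cM\bigr).
\end{equation*}
Since the family on the right is nested, increasing as $\kappa\downarrow 0$, continuity of the induced Lebesgue measure on $\cM$ from below gives
\begin{equation*}
\lim_{\kappa\to 0^+}|\cB_n(\Psi,\kappa)\cap\cM|_\cM\;=\;|\cM\setminus\cW_{1,n}(\Psi)|_\cM\;=\;|\cM|_\cM\;=\;1.
\end{equation*}
For any $\delta\in(0,1)$ we then pick $\kappa>0$ small enough to force $|\cB_n(\Psi,\kappa)\cap\cM|_\cM\geq 1-\delta$. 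This is directly analogous to the reduction of Theorem~C to Theorem~\ref{theorem_linear_forms}, now transplanted from $\Mmn$ to $\cM$ with its induced Lebesgue measure.

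The delicate point---and, I expect, the main obstacle---is upgrading the shape of $\kappa$ from $\kappa(\cM,\Psi,\delta)$, which is all a bare continuity argument produces, to $\kappa(\cM,\Sigma_\Psi,\delta)$ as claimed. For this one must exploit not merely the measure-zero conclusion of BKM but the quantitative covering estimate that is its engine: after covering a compact piece of $\cM$ whose complement has $\cM$-measure at most $\delta/2$, the Kleinbock--Margulis non-divergence machinery and the $(C,\alpha)$-good property of coordinate functions on non-degenerate manifolds furnish a bound of the shape
\begin{equation*}
\sum_{\vv q\in\Z^n\setminus\{\vv 0\}}\bigl|\{\vv y\in\cM:\|\vv y\cdot\vv q\|<\kappa\Psi(\vv q)\}\bigr|_\cM \;\le\; C(\cM)\,\kappa^{\alpha}\,\Sigma_\Psi
\end{equation*}
for a positive exponent $\alpha$ depending on the non-degeneracy data of $\cM$. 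The complement of $\cB_n(\Psi,\kappa)\cap\cM$ inside the chosen compact piece is dominated by this sum, so the explicit choice $\kappa=\bigl(\delta/(2C(\cM)\Sigma_\Psi)\bigr)^{1/\alpha}$ produces the conclusion with $\kappa$ depending only on $\cM$, $\Sigma_\Psi$ and $\delta$. Extracting this quantitative version cleanly from the BKM proof---rather than from its final qualitative statement---is the technical step that does the real work here, while the reduction itself is formally straightforward.
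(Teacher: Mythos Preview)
Your continuity-of-measures argument is exactly what the paper does: it simply notes that \cite[Theorem~1.1]{Bernik-Kleinbock-Margulis-01:MR1829381} implies $\bigcup_{\kappa>0}\cB_n(\Psi,\kappa)\cap\cM$ has full measure in $\cM$, and then invokes continuity of measures. That is the entire proof offered in the paper for Theorem~BKM as stated.

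You are right that this bare argument only produces $\kappa=\kappa(\cM,\Psi,\delta)$, not $\kappa(\cM,\Sigma_\Psi,\delta)$; the paper does not address this gap at the level of Theorem~BKM, and indeed its own explicit version (Theorem~\ref{theorem_expicit_1_1}) has $\kappa$ depending on both $\Sigma_\Psi$ and the separate quantity $C_\Psi$ of \eqref{slv3}, so the ``$\Sigma_\Psi$ only'' phrasing should be read loosely. Your proposed remedy, however, is not what the BKM machinery actually delivers: a single covering bound of the form $\sum_{\vv q}|\{\vv y\in\cM:\|\vv y\cdot\vv q\|<\kappa\Psi(\vv q)\}|_\cM\le C(\cM)\kappa^\alpha\Sigma_\Psi$ does not hold. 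The argument splits into a ``big gradient'' regime (Theorem~\ref{theo_explicit_1_3}), which does sum to $K_d\kappa\Sigma_\Psi|\cU|_d$, and a ``small gradient'' regime (Theorem~\ref{theo_explicit_1_4}), which is handled by nondivergence on the space of lattices and yields a bound of a different shape, involving $C_\Psi$ and a dyadic sum rather than $\Sigma_\Psi$. So your instinct that one must open up the quantitative engine is correct, but the specific inequality you posit is an oversimplification that would not survive the small-gradient analysis.
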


\bigskip

\begin{remark}
Theorem~BKM holds for arbitrary probability measures supported on $\cM$  that are  absolutely continuous with respect to the induced Lebesgue measure on $\cM$, thus giving an analogue of Theorem~\ref{theorem_linear_forms+} for manifolds. As in the case of Theorem~\ref{theorem_linear_forms}, the more general result follows from the Lebesgue statement.
\end{remark}

It is worth pointing out that the main result in~\cite{Bernik-Kleinbock-Margulis-01:MR1829381} actually implies that the union $\bigcup_{\kappa>0}\cB_{n}(\Psi,\kappa)\cap\cM$ has full measure on $\cM$. Theorem BKM as stated above follows from~\cite[Theorem~1.1]{Bernik-Kleinbock-Margulis-01:MR1829381}\footnote{Throughout, results and page numbers within \cite{Bernik-Kleinbock-Margulis-01:MR1829381} are with reference to the arXiv version: math/0210298v1} on using the continuity of measures. Our main goal is to quantify the dependence of $\kappa$ on $\delta$. Theorem~\ref{theorem_expicit_1_1} of \S\ref{6} below explicitly quantifies this dependence.   However, the statement is rather technical and we prefer to state for now a cleaner result that shows that the dependency between $\kappa$ and $\delta$ is polynomial.

\begin{theorem} \label{soft}
Let $l\in\N$ and let $\cM$ be a compact $d$--dimensional $C^{l+1}$ submanifold of $\R^n$ that is $l$--non--degenerate at every point.
Let $\mu$ be a probability measure supported on $\cM$ absolutely continuous with respect to $|\, .\, |_{\cM} $. Let $\Psi:\Z^n\to\Rp$ be a monotonically decreasing function in each variable satisfying \eqref{def_S_Psi}.
Then there exist positive constants $\kappa_0,C_0,C_1$ depending on $\Psi$ and $\cM$ only such that for any $0<\delta<1$, the inequality
\begin{equation} \label{theorem_linear_forms_ie_statement_v2++}
\mu(\cB_n(\Psi,\kappa)\cap\cM)\ge 1-\delta
\end{equation}
holds with
\begin{equation}\label{kappavb}
\kappa:=\min\left\{\kappa_0,\ C_0\Sigma_\Psi^{-1}\delta,\ C_1\delta^{d(n+1)(2l-1)}\right\}\,.
\end{equation}
\end{theorem}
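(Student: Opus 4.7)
My approach adapts the structure of the proof of Theorem~\ref{theorem_linear_forms+} given in Section~2 to the manifold setting, with the elementary Lebesgue estimate \eqref{v103} replaced by the quantitative Diophantine bound on manifolds underpinning Theorem~BKM --- ultimately, the Kleinbock--Margulis quantitative non-divergence lemma employed in \cite{Bernik-Kleinbock-Margulis-01:MR1829381}. The starting observation is the decomposition
\[
\cM\setminus\cB_n(\Psi,\kappa)\;=\;\bigcup_{\vv q\in\Z^n\setminus\{\vv 0\}}\cW(\vv q,\kappa\Psi(\vv q))\cap\cM,
\]
so the task reduces to bounding the $\mu$-measure of this union by $\delta$. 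Since $\mu$ is absolutely continuous with respect to $|\cdot|_{\cM}$, I first reduce to an estimate on the induced Lebesgue measure: truncating the density $f$ of $\mu$ to a sub-level set $\{f\leq K\}\subset\cM$ carrying $\mu$-mass at least $1-\delta/2$, a Lebesgue estimate of the form $|\cM\setminus\cB_n(\Psi,\kappa)|_{\cM}\leq C$ yields $\mu(\cM\setminus\cB_n(\Psi,\kappa))\leq KC+\delta/2$.

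The central step is to split the union at a threshold $T\geq 1$ into a \emph{main} part over $0<|\vv q|\leq T$ and a \emph{tail} part over $|\vv q|>T$. For the tail I shall use an elementary transversality bound of the shape $|\cW(\vv q,\varepsilon)\cap\cM|_{\cM}\leq C_{\cM}\varepsilon$, valid uniformly in $\vv q\neq\vv 0$ and $\varepsilon\leq\varepsilon_0(\cM)$ on a compact smooth submanifold (obtained by projecting $\cM$ in the direction $\vv q/|\vv q|$: each of the $\lesssim|\vv q|\cdot\mathrm{diam}(\cM)$ integer slabs has width $2\varepsilon/|\vv q|$ in that direction, after controlling tangencies through non-degeneracy). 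Summed over the tail this gives
\[
\Big|\bigcup_{|\vv q|>T}\cW(\vv q,\kappa\Psi(\vv q))\cap\cM\Big|_{\cM}\;\leq\;C_{\cM}\kappa\sum_{|\vv q|>T}\Psi(\vv q)\;\leq\;C_{\cM}\kappa\Sigma_\Psi,
\]
and requiring this to be at most $\delta/4$ furnishes the middle term $C_0\Sigma_\Psi^{-1}\delta$ in \eqref{kappavb}.

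For the main part I invoke the quantitative estimate of \cite[Theorem~1.4]{Bernik-Kleinbock-Margulis-01:MR1829381}: for a compact $l$-non-degenerate $\cM$ of dimension $d$ in $\R^n$, there exist $C_{\cM},\kappa_0>0$ such that for all $T\geq 1$ and $0<\eta\leq\kappa_0$,
\[
\Big|\{\vv x\in\cM:\exists\,\vv q\in\Z^n\setminus\{\vv 0\},\ |\vv q|\leq T,\ \|\vv q\cdot\vv x\|<\eta\}\Big|_{\cM}\;\leq\;C_{\cM}\,(\eta T^n)^{1/(d(n+1)(2l-1))}.
\]
Because $\Psi$ is monotonically decreasing in each variable, $\Psi(\vv q)$ is bounded on $\Z^n\setminus\{\vv 0\}$ by a constant $\Psi_*<\infty$ (finite by convergence of $\Sigma_\Psi$); absorbing $\Psi_*$ into the constant, the main part is bounded by $C'_{\cM}(\kappa T^n)^{1/(d(n+1)(2l-1))}$. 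Fixing $T$ equal to a constant depending only on $\cM$ and requiring this bound to be at most $\delta/4$ forces $\kappa\leq C_1\delta^{d(n+1)(2l-1)}$, the last term in \eqref{kappavb}; the constraint $\kappa\leq\kappa_0$ then legitimises the use of the BKM estimate.

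The principal technical obstacle is the careful extraction of the BKM estimate in the uniform form above, with the precise exponent $1/(d(n+1)(2l-1))$, and the tracking of the constants $\kappa_0$, $C_0$, $C_1$ in terms of local $C^{l+1}$ derivative bounds on $\vv f$ and a uniform lower bound on the non-degeneracy determinant over $\cM$. This requires unpacking the proof in \cite{Bernik-Kleinbock-Margulis-01:MR1829381}, where the Kleinbock--Margulis quantitative non-divergence lemma on $SL_{n+1}(\R)/SL_{n+1}(\Z)$ is applied after a Besicovitch covering of $\cM$ by balls on which $\vv f$ is well-approximated by its order-$l$ Taylor polynomial. Precisely this quantitative refinement is the content of Theorem~\ref{theorem_expicit_1_1}, from which Theorem~\ref{soft} then follows.
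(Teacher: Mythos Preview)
Your proposal has the right overall architecture --- two regimes, one yielding the linear term $C_0\Sigma_\Psi^{-1}\delta$ and one yielding the power $C_1\delta^{d(n+1)(2l-1)}$ --- and you correctly identify Theorem~\ref{theorem_expicit_1_1} as the technical engine. But the decomposition you propose is the wrong one, and as written the argument does not go through.

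The split in the paper is \emph{not} by the size of $|\vv q|$ but by the size of the gradient $\|\nabla(\vv f(\vv x)\cdot\vv q)\|_\infty$ relative to the threshold $\sqrt{ndL\|\vv q\|_\infty}$. Your ``elementary transversality bound'' $|\cW(\vv q,\varepsilon)\cap\cM|_{\cM}\leq C_{\cM}\varepsilon$ is precisely what Theorem~\ref{theo_explicit_1_3} supplies, but \emph{only} on the set where the gradient is large; it fails uniformly over all $\vv q$, because $\vv q\cdot\vv f$ can have critical points on $\cM$ regardless of how large $|\vv q|$ is (take $\vv q$ normal to $\cM$ at a point). Your parenthetical ``after controlling tangencies through non-degeneracy'' hides exactly the hard case. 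Likewise, the version of \cite[Theorem~1.4]{Bernik-Kleinbock-Margulis-01:MR1829381} you quote omits the small-gradient hypothesis $\|\nabla\vv f(\vv x)\vv q\|_\infty<K$, which is essential to that statement; without it the conclusion is false.

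In the paper's proof (see \S\ref{6}), the big-gradient sets $A(\kappa;\vv q)$ are summed over \emph{all} $\vv q$ using Theorem~\ref{theo_explicit_1_3} to give $K_d\Sigma_\Psi\kappa|\cU|_d$, producing the $C_0\Sigma_\Psi^{-1}\delta$ term. The small-gradient sets $A^c(\kappa;\vv q)$ are grouped dyadically in each coordinate $|q_i|\sim 2^{t_i}$ and handled by Theorem~\ref{theo_explicit_1_4} (the genuine Kleinbock--Margulis input), with the summable factor $2^{-\|\vv t\|_\infty/(2d(2l-1)(n+1))}$ arising from the interplay of $\delta'$, $K$ and $T_i$ in \eqref{theo_explicit_1_4_restrictions}--\eqref{theo_explicit_1_4_def_e1}; this produces the $C_1\delta^{d(n+1)(2l-1)}$ term. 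The constant $\kappa_0$ comes from the constraint \eqref{theo_explicit_1_4_restrictions} needed to apply Theorem~\ref{theo_explicit_1_4}. So the two terms in \eqref{kappavb} correspond to the gradient dichotomy, not to a main/tail split in $|\vv q|$.
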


\section{Diophantine approximation on manifolds and wireless technology}

In short, interference alignment is a linear precoding technique that attempts to align signals in time, frequency, or space.  The following exposition is an attempt to illustrate at a basic level the role of Diophantine approximation in implementing this technique. We stress that this section is not meant for the ``electronics'' experts.  We consider two examples.  The first basic example  brings into play the theory of Diophantine approximation while the second slightly more complicated example also  brings into play the manifold theory.


\medskip

\noindent {\small EXAMPLE} 1. \, There are two people (\emph{users})  $S_1$ and $S_2$ who wish to send (\emph{transmit}) a  message  (\emph{signal}) $u \in \{ 0,1\} $ and $v \in \{ 0,1\}$ respectively  along a single communication channel (could be a cable or radio channel) to a person (\emph{receiver}) $R$. Suppose there is a certain degree of fading (\emph{channel coefficients}) associated with the messages during transmission along the channel.  This for instance could be dependent on the distance of the users to the receiver and in the case of a radio channel, the reflection caused by obstacles such as  buildings in the path of the signal. It is worth stressing that this aspect of ``fading'' associated with a signal  should not be confused with the more familiar aspect of a signal being corrupted by ``noise''  that will be discussed  a little later.    Let $h_1$ and $h_2$  denote the fading factors associated with the messages being sent by  $S_1$ and $S_2$ respectively. These are strictly positive numbers and assume their sum is one.  Also, assume that the channel is additive.  That is to say that  $R$  receives the  message:
\begin{equation} \label{nonoise}
y = h_1 x_1  +  h_2 x_2   \qquad {\rm where \  }  \  \   x_1= u    \quad {\rm and } \quad x_2= v  \, .
\end{equation}
Specifically, the outcomes of $y$ are
\begin{equation}\label{nonoiseout}
y=\begin{cases}
0 &\text{ if  } \quad  u=v=0 \\[1ex]
h_1 &\text{ if  }\quad  u=0 \text{ and } v=1\\[1ex]
h_2 &\text{ if  } \quad u=1 \text{ and } v=0\\[1ex]
1= h_1+h_2  &\text{ if  }  \quad u=v=1 \, ,
\end{cases}
\end{equation}
and if $h_1 \neq h_2$, the receiver is obviously able to recover the messages $u$ and $v$. Moreover, the greater the mutual separation of the above four outcomes in the unit interval $I=[0,1]$,  the better the tolerance for error (\emph{noise}) during the transmission of the signal. The noise can be a combinations of various factors but often the largest contributing factor is  the  interference caused by other communication channels.   If  $z$ denotes the noise, then instead of \eqref{nonoise}, in  practice $R$  receives the  message:
\begin{equation}
\label{withnoise}
y = h_1 x_1  +  h_2 x_2  + z     \qquad {\rm where \  }  \  \   x_1= u    \quad {\rm and } \quad x_2= v  \, .
\end{equation}

%
%
%
%
%
%
%
%
%
%
%
%
%
%

Now let $d$ denote the minimum distance between the four outcomes of $y\in I$ which are explicitly given by \eqref{nonoiseout}. Then as long as the absolute value $|z|$ of the noise is strictly less than $d/2$, the receiver is  able to recover the messages $u$ and $v$. This is simply due to the fact that intervals of radius  $d/2$ centered at the four outcomes of $y$ are disjoint.  In this basic example, it is easy to see that the maximum separation between the four outcomes is attained when $h_1 = 1/3$ and $h_2 = 2/3$.   In this case $d = 1/3$,  and we are able to recover the messages $u$ and $v$ as long as $|z| < 1/6$.  The upshot is that the closer the real numbers $h_1$ and $h_2$ are to $1/3$ and $2/3$ the better the tolerance for noise.  Hence, at the most fundamental level we are interested in the simultaneous approximation property of real numbers by rational numbers.  In practice, it is the  probabilistic aspect of the approximation property that is important -- knowing that  the numbers $h_1$ and $h_2$ lie within a `desirable' neighbourhood of the points $1/3$ and $2/3$ with reasonably high probability is key.  This naturally brings into  play  the  theory of metric  Diophantine approximation.

\vspace{0.5cm}

\setlength{\unitlength}{15mm}

\begin{figure}[h!]
\begin{center}
\begin{picture}(0,0)

\put(-3,0){\line(1,0){6}}
\put(-3,-0.1){\line(0,1){0.2}}
\put(3,-0.1){\line(0,1){0.2}}
\put(-1,-0.1){\line(0,1){0.2}}
\put(1,-0.1){\line(0,1){0.2}}
\put(-1.2,-0.1){\line(0,1){0.2}}
\put(1.2,-0.1){\line(0,1){0.2}}

\put(-3.025,-0.3){0}
\put(2.975,-0.3){1}
\put(-1.3,0.2){$h_1$}
\put(-1.065,-0.35){$\frac{1}{3}$}
\put(1.15,0.2){$h_2$}
\put(0.95,-0.35){$\frac{2}{3}$}

\put(-3,0.01){\line(1,0){0.5}}
\put(-3,-0.01){\line(1,0){0.5}}

\put(-1.5,0.01){\line(1,0){1}}
\put(-1.5,-0.01){\line(1,0){1}}

\put(0.5,-0.01){\line(1,0){1}}
\put(0.5,0.01){\line(1,0){1}}

\put(2.5,-0.01){\line(1,0){0.5}}
\put(2.5,0.01){\line(1,0){0.5}}

\put(-2.55,-0.05){\textbf{)}}

\put(-0.55,-0.06){\textbf{)}}
\put(-1.52,-0.06){\textbf{(}}

\put(1.45,-0.06){\textbf{)}}
\put(0.48,-0.06){\textbf{(}}

\put(2.48,-0.06){\textbf{(}}

\put(-3,0.2){\vector(1,0){0.5}}
\put(-2.5,0.2){\vector(-1,0){0.5}}

\put(-2.8,0.3){$z$}

\end{picture}
\end{center}
\end{figure}
\vspace{0.5cm}

Note that from a probabilistic point of view,  the chances that  $h_1= h_2$ is zero and is therefore insignificant. Furthermore, within the context of  this basic example,  by weighting (\emph{precoding}) the messages $u$ and $v$ appropriately before  the transmission stage  it is possible  to ensure optimal separation ($d = 1/3$) at the receiver regardless of the values of $h_1$ and $h_2$. Indeed, suppose $x_1= \frac13 h_1^{-1} u $ and  $     y_2 = \frac23 h_2^{-1} v $ are transmitted  instead of $u $ and $v$.  Then,  without taking noise into consideration,  $R$ receives the message
\begin{equation} \label{codingeg1}
y =   h_1 x_1 +  h_2 x_2  =   \textstyle{\frac13}  u + \textstyle{ \frac23}  v
\end{equation}
and so the specifics outcomes are
\begin{equation}\label{codingeg1out}
y=\begin{cases}
0 &\text{ if  } \quad  u=v=0 \\[1ex]
1/3 &\text{ if  }\quad  u=0 \text{ and } v=1\\[1ex]
2/3  &\text{ if  } \quad u=1 \text{ and } v=0\\[1ex]
1  &\text{ if  }  \quad u=v=1 \, .
\end{cases}
\end{equation}

\medskip

\noindent {\small EXAMPLE} 2. \,  There are two users  $S_1$ and $S_2$ as before but this time there are also two receivers $R_1$ and $R_2$.   Suppose $S_1$  wishes to simultaneously transmit independent signals $u_1$  and $v_1$ as a single signal, say $x_1=u_1 + v_1$ where $u_1$ is intended for  $R_1$  and $v_1$  for $R_2$. Similarly, suppose $S_2$  wishes to simultaneously transmit independent signals $u_2$  and $v_2$  as a single signal, say  $x_2=u_2 + v_2$  where $u_2$ is intended for  $R_1$  and $v_2$  for $R_2$. As in the first example, for the sake of simplicity, we can assume that the signals  $u_1, u_2, v_1, v_2 \in \{ 0,1\} $.   Now let $h_{11}$ and  $h_{21}$ denote the channel coefficients  associated with  signals  being sent by  $S_1$  to $R_1$ and $ R_2 $ respectively. Similarly, let  $h_{12}$ and  $h_{22}$ denote the channel coefficients  associated with signals being sent by  $S_2$  to $R_1$ and $R_2$.  Assume that the channel is additive and let $y_1$ (respectively $y_2$) denote the signal at receiver  $R_1 $ (respectively $R_2$).   Thus,
\begin{eqnarray} \label{2nonoise}
y_1  & = & h_{11} x_1   +  h_{12} x_2  \\ [1ex]
y_2  & = & h_{21} x_1   +  h_{22} x_2  \label{21nonoise} \,
\end{eqnarray}
where
\begin{equation} \label{2nonoiseChoice1}
 x_1= u_1 + v_1     \quad {\rm and } \quad x_2= u_2+ v_2  \, .
\end{equation}

\paragraph{}
\setlength{\unitlength}{20mm}

\vspace{0.5cm}

\begin{figure}[h!]
\begin{center}
\begin{picture}(2, 2)(-0.75,-0.75)

\put(-1.5,0.8){\line(0,1){0.25}}
\put(-1.5,1.05){\line(-1,1){0.2}}
\put(-1.5,1.05){\line(1,1){0.2}}
\put(-1.5,0.8){\line(-1,0){0.2}}
\put(-1.1,0.8){$x_1$}

\put(1.4,0.8){\line(0,1){0.25}}
\put(1.4,1.05){\line(-1,1){0.2}}
\put(1.4,1.05){\line(1,1){0.2}}
\put(1.4,0.8){\line(1,0){0.2}}
\put(0.9,0.8){$y_1$}

\put(-1.5,-1){\line(0,1){0.25}}
\put(-1.5,-0.75){\line(-1,1){0.2}}
\put(-1.5,-0.75){\line(1,1){0.2}}
\put(-1.5,-1){\line(-1,0){0.2}}
\put(-1.1,-0.9){$x_2$}

\put(1.4,-1){\line(0,1){0.25}}
\put(1.4,-0.75){\line(-1,1){0.2}}
\put(1.4,-0.75){\line(1,1){0.2}}
\put(1.4,-1){\line(1,0){0.2}}
\put(0.9,-0.9){$y_2$}

\put(0,0){\vector(1,1){0.71}}
\put(0,0){\line(-1,-1){0.71}}
\put(0,0){\line(-1,1){0.71}}
\put(0,0){\vector(1,-1){0.71}}

\put(-0.71,0.9){\vector(1,-0){1.4}}
\put(-0.71,-0.9){\vector(1,-0){1.4}}

\put(-2.1,0.9){\Large{$S_1$}}
\put(-2.1,-0.9){\Large{$S_2$}}
\put(1.7,0.9){\Large{$R_1$}}
\put(1.7,-0.9){\Large{$R_2$}}

\put(0,1.05){\large{$h_{11}$}}
\put(0,-1.15){\large{$h_{22}$}}
\put(0.4,0.22){\large{$h_{12}$}}
\put(0.4,-0.35){\large{$h_{21}$}}

\put(-2.9,0.8){{$u_1, v_1$}}
\put(-3.0,-0.9){{$u_2, v_2$}}

\end{picture}
\end{center}
\end{figure}

\paragraph{}
Recall, that $R_1$  (respectively $R_2$) only cares about recovering the signals $ u_1 $  and  $u_2 $ (respectively $ v_1 $  and  $v_2 $) from $y_1$ (respectively $y_2$).   For the moment, let us just concentrate on the signal  received by $R_1$; namely
\begin{equation*} \label{2nonoisee}
y_1  = h_{11} u_1 +  h_{12} u_2   + h_{11} v_1  +  h_{12} v_2 \, .
\end{equation*}
It is easily seen that this corresponds to a  received signal in Example 1 modified to incorporate four users and one receiver. This time there are potentially 16 different outcomes.  In short, the  more users, the  more outcomes and therefore the smaller the mutual  separation between them  and in turn the smaller the tolerance for noise.   Now there is one aspect of the setup in this example  that we have not yet exploited.  The receiver $R_1$  is not interested in the signals $ v_1 $  and  $v_2 $. So if they could be deliberately aligned via precoding into a single component  $v_1 + v_2 $,  then $y_1$ would look like a received signal associated with just 3 users rather than 4. With this in mind, suppose instead of transmitting $x_1$ and $x_2$ given by \eqref{2nonoiseChoice1}, $S_1$ and $S_2$ transmit the signals
\begin{equation} \label{2nonoiseChoice2}
 x_1= h_{22} u_1 + h_{12}v_1     \quad {\rm and } \quad x_2= h_{21} u_2+ h_{11} v_2  \,
\end{equation}
respectively.  Then, it can be verified that the received signals given by \eqref{2nonoise} and  \eqref{21nonoise} can be written as
\begin{eqnarray*} \label{2nonoise22}
y_1  & = & (h_{11} h_{22}) u_1   +  (h_{21} h_{12}) u_2  + (h_{11} h_{12})  (v_1 + v_2)  \\ [1ex]
y_2  & = & (h_{21} h_{12}) v_1   +  (h_{11} h_{22}) v_2  + (h_{21} h_{22})  (u_1 + u_2)  \, .
\end{eqnarray*}

\noindent In other words, the unwanted, interfering signals at either receiver are aligned  to a one dimensional subspace of four dimensional space. Notice that in the above equations the six coefficients are only of four variables, namely $h_{i,j}$, $i,j=1,2$, and thus represent dependent quantities. This, together with our findings from Example 1, naturally brings into  play  the manifold  theory of metric  Diophantine approximation.

\medskip

Example 2  is a simplified version of Example 3 appearing in \cite[\S III]{MOGMAK}.  For a deeper and more practical understanding of the link between interference alignment and metric  Diophantine approximation on manifolds the reader is urged to look at \cite{MOGMAK} and \cite[\S4.7]{J2010}.

\section{Preliminaries for Theorem~\ref{soft}} \label{section_preliminaries}

\subsection{Localisation and parameterisation}\label{loc}

Since $\cM$ is non--degenerate everywhere, we can restrict ourself to considering a sufficiently small neighbourhood of an arbitrary point on $\cM$. By compactness, $\cM$ then can be covered with a finite subcollection of such neighbourhoods. Therefore, in view of the finiteness of the cover, the existence of $\kappa_0$, $C_0$ and $C_1$ satisfying Theorem~\ref{soft} globally will follow from the existence of these parameters for every neighbourhood in the finite cover~: $\kappa_0$, $C_0$ and $C_1$ should be taken to be the minimum of their local values.

Now as we can work with $\cM$ locally, we can parameterize it with some map $\vv f:\cU\to\R^n$ defined on a ball $\cU$ in $\R^d$, where $d=\dim\cM$. Note that $\vv f$ must be at least $C^{2}$ in order to ensure that $\cM$ is non--degenerate. Without loss of generality we assume that
$$
\cM=\{\vv f(\vv x):\vv x\in \cU\}\,.
$$
Furthermore, using the Implicit Function Theorem if necessary, we can make $\vv f$ to be a Monge parametrisation, that is $\vv f(\vv x)=(x_1,\dots,x_d,f_{d+1}(\vv x),\dots,f_n(\vv x))$, where $\vv x=(x_1,\dots,x_d)$.
Note that $\vv f$ can be assumed to be bi--Lipschitz on $\cU$. This readily follows from the fact that $\vv f$ is $C^{1}$ but possibly requires a further shrinking of $\cU$.

Let $\mathcal{B}_n(\Psi,\kappa,\cM)$ denote the orthogonal projection of $\cB_n(\Psi,\kappa)\cap\cM$ onto the set of parameters $\cU$. Thus,
\begin{equation} \label{def_B_nanifold}
 \mathcal{B}_n(\Psi,\kappa,\cM):=\left\{\vv x \in \cU : \|\vv a. \vv f(\vv x) \|  > \kappa\Psi(\vv a)  \text{ ~for all~ } \vv a \in \Z^n,\ \vv a\neq\vv 0 \right\}.
\end{equation}
The set $\cB_n(\Psi,\kappa)\cap\cM$ and its projection $\mathcal{B}_n(\Psi,\kappa,\cM)$ are related by the bi--Lipschitz map $\vv f$. Since bi--Lipschitz maps only affect the Lebesgue measure of a set by a multiplicative constant (in this case the constant will depend on $\vv f$ only), it suffices to prove Theorem~\ref{soft} for the project set. More precisely, Theorem~\ref{soft} is equivalent to showing that there exist positive constants $\kappa_0$, $C_0$ and $C_1>0$ depending on $\Psi$ and $\vv f$ only such that for any $0<\delta<1$,
\begin{equation} \label{theorem_linear_forms_ie_statement_v2+}
|\cB_n(\Psi,\kappa,\cM)|_d\ge (1-\delta)|\cU|_d
\end{equation}
holds with $\kappa$ given by \eqref{kappavb}.

\subsection{Auxiliary statements}

We will denote the standard $L_1$ (resp. Euclidean, in\-fi\-ni\-ty) norm on $\R^d$ by $\left\|\,. \, \right\|_1$ (resp. $\left\|\, . \, \right\|_2$, $\left\|\,. \,  \right\|_{\infty}$). Also as before, given an $x\in\R$, $\|x\|$ will denote the distance of $x$ from the nearest integer. The notation $B\!\left(\bm{x}, r\right)$ will refer to the Euclidean open ball of radius $r>0$ centered at $\bm{x}$ and $\Sph^{d-1}$ will denote the unit sphere in dimension $d\ge 1$ (with respect to the Euclidean norm). Furthermore, throughout
$$
V_d    := \frac{\pi^{d/2}}{\Gamma \left(1+d/2\right)}
$$
is the volume of the $d$--dimensional unit ball and $N_d$ denotes the Besicovitch covering constant.

\begin{remark}\label{besicocst}
For further details on the Besicovitch covering constant, cf.~\cite{furedi_loeb}. We will only need in what follows the inequality $N_d\le 5^d$ satisfied by this constant.
\end{remark}

The proof of Theorem~\ref{soft} involves two separate cases that take into consideration the relative size of the gradient of $\vv f(\vv x)\cdot\vv q$, where $\vv q=(q_1,\dots,q_n)\in\Z^n\setminus\{\vv0\}$ and   $\vv f(\vv x)\cdot\vv q=f_1(\vv x)q_1+\dots+f_n(\vv x)q_n$ is the standard inner product of $\vv f(\vv x)$ and $\vv q$. The first case of `big gradient' is considered within the next result and is an adaptation of \cite[Theorem~1.3]{Bernik-Kleinbock-Margulis-01:MR1829381}.

In what follows, $\partial_\beta$ will denote partial derivation with respect to a multi--index $\beta=(\beta_1,\dots,\beta_d)\in\N_0^d$, where $\N_0$ will stand for the set of non--negative integers, that is $\N_0:=\{0,1,2,\dots\}$. Furthermore, $|\beta|$ we will mean the order of derivation, that is $|\beta|:=\beta_1+\dots+\beta_d$. Also, $\partial_i^k$ will denote the differential operator corresponding to the $k^{\textrm{th}}$ derivative with respect to the $i^{\textrm{th}}$ variable, that is, $\partial_i^k :=\partial^k/\partial x_i^k$.

\begin{theorem}\label{theo_explicit_1_3}
Let $\cU\subset\R^d$ be a ball of radius $r$ and $\vv f\in C^2(\wcU)$, where $\wcU$ is the ball with the same centre as $\cU$ and radius $2r$. Let
\begin{equation} \label{theo_explicit_1_3_def_L}
L^*:=\sup_{|\beta|=2,\ \vv x\in 2\cU}\|\partial_{\beta}\vv f(\vv x)\|_{\infty}\qquad\text{and}\qquad
L:=\max\left(L^*,\frac{1}{4r^2}\right).
\end{equation}
Then, for every $\delta'>0$ and every $\vv q\in\Z^n\setminus\{0\}$, the set of $\vv x\in \cU$ such that
 \[
 \|\vv f(\vv x)\cdot\vv q\|<\delta'
 \]
and
 \begin{equation} \label{theo_explicit_1_3_gradient_big}
 \|\nabla\vv f(\vv x)\vv q\|_\infty\ge\sqrt{ndL\|\vv q\|_\infty}
 \end{equation}
 has measure at most $K_d\delta'|\cU|_d$, where $\nabla\vv f(\vv x)\vv q$ is the gradient of $\vv f(\vv x)\cdot\vv q$ and
\begin{equation}\label{def_K_d}
 K_d:=\frac{4^{2d+1}d^{d/2}N_d}{V_d}
\end{equation}
is a constant depending on $d$ only.
\end{theorem}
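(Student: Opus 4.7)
The strategy follows Bernik--Kleinbock--Margulis: localise to a ball on which $F(\vv x):=\vv f(\vv x)\cdot\vv q$ is strictly monotonic in a fixed coordinate direction, use Fubini slicing to count how many integer levels $F$ can cross inside such a slice, and piece the local estimates together via a Besicovitch covering.

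For the setup, observe that for any multi-index $\beta$ with $|\beta|=2$ and any $\vv x\in 2\cU$,
$$
|\partial_\beta F(\vv x)|\ \le\ \|\vv q\|_1\cdot\|\partial_\beta\vv f(\vv x)\|_\infty\ \le\ n\|\vv q\|_\infty L\ =:\ H,
$$
so the hypothesis \eqref{theo_explicit_1_3_gradient_big} reads $\|\nabla F(\vv x)\|_\infty\ge G:=\sqrt{dH}$. Partition the set in question as $E=E_1\cup\dots\cup E_d$, where $E_i$ is the subset on which $|\partial_iF|$ realises that maximum; it suffices to bound $|E_i|$ for each $i$ separately.

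To every $\vv x_0\in E_i$ I would attach the adaptive radius
$$
\rho(\vv x_0):=\min\left\{r,\ \frac{m(\vv x_0)}{2\sqrt{d}\,H}\right\},\qquad m(\vv x_0):=|\partial_iF(\vv x_0)|\ge G.
$$
Since $\nabla\partial_iF$ has Euclidean norm at most $\sqrt{d}\,H$, the mean-value inequality gives $|\partial_iF(\vv x)|\in[m(\vv x_0)/2,\,3m(\vv x_0)/2]$ throughout $B(\vv x_0,\rho(\vv x_0))\subset 2\cU$. Slicing this ball by lines parallel to the $x_i$-axis and using the strict monotonicity of $F$ on every slice, the one-dimensional set $\{t:\|F(\hat{\vv x},t)\|<\delta'\}$ has $t$-measure at most $(3m(\vv x_0)\rho(\vv x_0)+1)\cdot 4\delta'/m(\vv x_0)$: the first factor counts the integers met by the range of $F$ along the slice, and the second bounds each individual preimage. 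The condition $L\ge 1/(4r^2)$ built into the definition of $L$ ensures $G\ge\sqrt{d}/(2r)$, and this is precisely what permits the integer-count term $4\delta'/m(\vv x_0)$ to be absorbed into $O(\rho(\vv x_0)\delta')$. Integrating over the $(d-1)$-dimensional projection then yields
$$
|E_i\cap B(\vv x_0,\rho(\vv x_0))|\ \le\ C_d\,\rho(\vv x_0)^d\,\delta'
$$
for an explicit dimensional constant $C_d$.

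Applying the Besicovitch covering theorem to the family $\{B(\vv x_0,\rho(\vv x_0))\}_{\vv x_0\in E_i}$ produces a subcover of multiplicity at most $N_d$ contained in $2\cU$, so summing the local estimates gives $|E_i|\le C_d\,\delta'\sum_j\rho_j^d\le C_d N_d\,2^d V_d^{-1}|\cU|_d\,\delta'$. Summing over $i\in\{1,\dots,d\}$ produces a bound of the shape $K_d\,\delta'\,|\cU|_d$ with $K_d$ as in \eqref{def_K_d}. The main obstacle is the adaptive choice of $\rho(\vv x_0)$: a constant radius $\rho=1/(2\sqrt{H})$ would keep $|\partial_iF|\ge G/2$ but leaves no upper bound on $|\partial_iF|$, so one could not count the integer levels that $F$ crosses inside a slice; letting $\rho$ scale with $m(\vv x_0)$ restores this control, at the price of having to invoke Besicovitch covering (whence the factor $N_d$) rather than a Vitali-type disjointness argument, and of having to verify that the absorption of $4\delta'/m(\vv x_0)$ genuinely holds in both cases of the minimum defining $\rho(\vv x_0)$.
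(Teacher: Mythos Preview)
Your proposal is correct and follows precisely the strategy of \cite[Lemma~2.2]{Bernik-Kleinbock-Margulis-01:MR1829381}, which is exactly what the paper invokes: the paper's own proof consists of checking that the hypotheses of that lemma are met with $M=ndL$ and then tracking the auxiliary constants $C_d',C_d'',C_d'''$ through the BKM argument to arrive at the explicit $K_d$. In effect you have sketched the proof of that lemma rather than citing it, with the same three ingredients (adaptive radius so that $\partial_iF$ is pinned between $m/2$ and $3m/2$, Fubini slicing with an integer-level count, Besicovitch reassembly).

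One minor point worth flagging: your neighbourhoods are Euclidean balls, whereas the BKM argument (as the paper's footnote about $U(x)\supset B(x,\rho/4\sqrt d)$ indicates) works with cubes; this is why the paper's $K_d$ carries a factor $d^{d/2}$ coming from the ball-in-cube comparison, whereas your ball-based bookkeeping would naturally produce only a factor $d$ from the final sum over coordinate directions. So your sketch does not literally reproduce the stated $K_d$ but rather a constant of the same type (and in fact somewhat smaller). If you want to match \eqref{def_K_d} exactly you should switch to cubes of side comparable to $\rho/\sqrt d$ for the local patches, as BKM do; otherwise the argument stands but delivers a sharper numerical constant than claimed.
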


\begin{proof}

The proof of Theorem~\ref{theo_explicit_1_3} follows on appropriately applying ~\cite[Lemma~2.2]{Bernik-Kleinbock-Margulis-01:MR1829381}. For convenience we refer to this lemma as L2.2. We take  $M$ in L2.2 to be equal to the quantity $ndL$, where $L$ is defined by~\eqref{theo_explicit_1_3_def_L}. We set $\delta$ in L2.2 to be equal to  $\delta'$ appearing in   Theorem~\ref{theo_explicit_1_3}. Then, in view of $\eqref{theo_explicit_1_3_def_L}$ and the fact that  $n,d,\|\vv q\|_\infty\geq 1$, it follows that the  hypotheses of L2.2  (namely ~\cite[Eq(2.1a) \& Eq(2.1b)]{Bernik-Kleinbock-Margulis-01:MR1829381}) are satisfied.  Thus, the conclusion of L2.2 implies Theorem~\ref{theo_explicit_1_3} with the constant $C_d$ in L2.2 equal to
$K_d$ appearing in Theorem~\ref{theo_explicit_1_3}. The explicit value of $K_d$ is calculated by `tracking' the values of the auxiliary constants $C'_d$, $C''_d$ and $C_d'''$ appearing in \cite{Bernik-Kleinbock-Margulis-01:MR1829381}.
Namely\footnote{There are two typos in the proof of L2.2 that one should be aware of when verifying the values of the constants given here. On  page~6 line~-2,  the inclusion regarding $U(x)$ is the wrong way round, it should read   $U(x)\supset B(x,\frac{\rho}{4\sqrt{d}})$.  Next, on  page~7 line 11, in the rightmost term  of the displayed set of inequalities the quantity $\delta$ is missing, it should read   $C_d'''\delta |U(x)|_d$. These typos do not affect the validity of the  proof  given in ~\cite{Bernik-Kleinbock-Margulis-01:MR1829381}.},
$$
C_d'=\frac{V_d}{2^{2d}d^{d/2}},\qquad
C_d'' = 2^{d+2},\qquad
C_d'''=\frac{C_d''}{C_d'}\,,
$$
and then
$$
K_d=2^dC_d'''N_d=\frac{2^dC_d''N_d}{C_d'} = \frac{2^{4d+2}d^{d/2}N_d}{V_d}\cdotp
$$
\end{proof}

\

Next in Theorem~\ref{theo_explicit_1_4} below we consider the case of `small gradient'.
This is an explicit version of~\cite[Theorem~1.4]{Bernik-Kleinbock-Margulis-01:MR1829381}. First we introduce auxiliary constants.

Given a $C^l$ map $\vv f:\cU\to\R^n$ defined on a ball $\cU$ in $\R^d$, the supremum of $s\in\R$ such that for any $\bm{x}\in \cU$ and any $\bm{v}\in\Sph^{n-1}$ there exists an integer $k$, $0<k\le l$, and a unit vector $\bm{u}\in\Sph^{d-1}$ satisfying
\begin{equation} \label{def_s_lb}
\left|\frac{\partial^k (\vv f\cdot \vv v)}{\partial \bm{u}^k}(\bm{x})\right|\ge s
\end{equation}
will be called \emph{the measure of  $l$--non--degeneracy of $(\bm{f},\cU)$} and will be denoted by $s(l;\vv f,\cU)$. Here and elsewhere for a unit vector $\vv u\in\Sph^{d-1}$, $\partial^k/\partial\vv u^k  $ will denote the derivative in direction $\vv u$ of order $k$.

As in Theorem~\ref{theo_explicit_1_3}, the radius of the ball $\cU$ will be denoted by $r$. Throughout, we let $\bm{x}_0$  denote the centre of $\cU$.  Also, given a real number $\lambda > 0$, we let $\lambda \cU$ denote the scaled ball of radius $\lambda \, r$ and with the same centre $\bm{x}_0$ as that of $\cU$.  With this in mind, consider the balls
\begin{equation} \label{def_tcU}
\begin{aligned}
\cUp&:=3^{d+1}\cU,\\
\tcU \, \, ~ &:=3^{n+1}\cU,\\
\tcUp&:=3^{n+d+2}\cU.
\end{aligned}
\end{equation}
For technical reasons, that will soon become apparent, in order to deal with the `small gradient' case we make the following assumption on the map $\vv f:\cU\to\R^n$.

\noindent {\bf Assumption 1. }
{ \em The map  $\vv f=(f_1,\dots,f_n)$ is an $n$--tuple of $C^{l+1}$ functions defined on the closure of $\tcUp$ which is $l$--non--degenerate everywhere on the closure of $\tcUp$.}

\medskip

 \noindent\textit{Remark.} In view of the discussion of \S\ref{loc},  there is no loss of generality in imposing Assumption~1  within the context of Theorem~\ref{soft}.

\medskip

 We denote by $s_0$ the measure of non--degeneracy of $\vv f$ on $\tcUp$. Note that
Assumption~1 ensures that
\begin{equation}\label{def_s0}
s_0:=s(l;\vv f,\tcUp)  > 0.
\end{equation}
Also, notice that it ensures the existence of a constant $M\ge 1$ such that for all $k\le l+1$ and all
$\vv u_1, \dots , \vv u_k\in\Sph^{d-1}$,
\begin{equation}
  \label{eqn:def_N}
\underset{\vv x\in \tcUp}{\sup} \left\| \frac{\partial^k \vv f}{\partial \vv u_1 \dots \partial\vv u_k}(\vv x) \right\|_{2} \le M\,,
\end{equation}
where $\partial\vv u_i$ means differentiation in direction $\vv u_i$.  Note that the left--hand side of \eqref{def_s_lb} is the length of the projection of $\partial^k\vv f(\vv x)/\partial\vv u^k$ on the line passing through $\vv v$ and hence it is no bigger than $M$. This implies that
$$
s_0  \ \leq  \  M \, .
$$

Without loss of generality,   we will assume that the radius $r$ of the ball $\cU$  satisfies
\begin{equation}\label{slv2}
  r\le \min\left\{\frac{s_0 \cdot \sigma(l,d)}{2\cdot 3^{n+d+2}\sqrt{d}M}, \frac{\eta s_0}{4\cdot10^73^{n+d+2}\, d M l^{l+2}(l+1)!}\right\},
\end{equation}
where
\begin{equation}\label{def_r_V}
\eta:= \min\left\{\frac{1}{16} , \left( \frac{V_d}{2^{d+2}dl(l+1)^{1/l}5^d}\right)^{d(2l-1)(2l-2)}\right\}
\end{equation}
and where
\begin{equation}\label{def_sigma_l_d}
\sigma(l,d):= \frac{1}{2^{3l(d-1)/2}}\cdot \phi\left(\left( \sqrt{2}\cdot(2l)^{2+l(l-1)/2}\cdot(l+1)!\right)^{-1}, 2, l \right)^{d-1}
\end{equation}
with the quantity $\phi(\omega, B, k)$ defined as
\begin{equation}\label{def_phi_deltaBk}
\phi(\omega, B, k):= \frac{\omega^{k(k-1)/2}}{2\sqrt{2}\cdot B^k\cdot (k+1)!}
\end{equation}
for any integer $k\ge 1$ and any real numbers $\omega, B>0$.

Furthermore, define the following constants determined by $\vv f$ and $\cU$:
\begin{equation} \label{def_rho_one_first}
\rho_1:=\frac{s_0}{4l^l(l+1)!\sqrt{d}}(2r)^l\,,
\end{equation}
$$
\tau:= \frac{r^l s_0}{4l^l (l+1)!},
$$
and
\begin{equation} \label{def_rho_two_first}
\rho_2:=\frac{s_0}{4l^l(l+1)!}\left( \frac{\tau}{M}\right)^{l-1}\frac{\left(\tau\left( 1-1/\sqrt{2}\right)\right)^2}{\sqrt{\left(\frac{s_0}{4l^l (l+1)!}\left(\frac{\tau}{M}\right)^l \right)^2+\left(\tau\left(1+\frac{1}{\sqrt{2}}\right)\right)^2}}\,\cdotp
\end{equation}
Finally, let
\begin{equation}\label{deffinalerho_first}
\rho:=\frac{\rho_1\rho_2}{\sqrt{\rho_1^2+(\rho_2+2 M^2)^2}}\cdotp
\end{equation}

\medskip

\begin{theorem}\label{theo_explicit_1_4}
Let $\cU\subset\R^d$ be a ball and  $\vv f=(f_1,\dots,f_n)$ be an $n$--tuple of $C^{l+1}$ functions satisfying  Assumption 1. 
Then, for any $0<\delta'\le{}1$, any $n$-tuple $\vv T=(T_1, \dots, T_n)$ of real numbers $\ge{}1$ and any $K>0$ satisfying
\begin{equation} \label{theo_explicit_1_4_restrictions}
\frac{\delta' K T_1\cdot\dots\cdot T_n}{\max_{i=1,\dots,n}T_i}\le 1,
\end{equation}
define the set $A(\delta',K,\vv T)$ to be
\begin{equation} \label{theo_explicit_1_4_set}
A(\delta',K, \vv T):=\left\{
\vv x\in \cU : \exists \ \vv q\in\Z^n\setminus\{0\} \text{ such that }
\begin{cases}
	&\|\vv f(\vv x)\cdot\vv q\|<\delta'\\
  &\|\nabla\vv f(\vv x){}\vv q\|_\infty<K\\
  &|q_i|<T_i,i=1,\dots,n
\end{cases}
\right\}.
\end{equation}
Then
\[ 
\left| A(\delta', K, \vv T)\right|_d \le E\left(\sqrt{n+d+1}\cdot\varepsilon_1\right)^{1/d(2l-1)}|\cU|_d \, ,
\]
where
\begin{equation} \label{theo_explicit_1_4_def_e1}
\varepsilon_1:=\max\left(\delta',\left(\frac{\delta' K T_1\cdot\dots\cdot T_n}{\max\limits_{1\le i \le n}T_i}\right)^{\frac{1}{n+1}}\right)\,
\end{equation}
and
\begin{equation} \label{theo_explicit_1_4_def_E}
E:=C(n+1)(3^dN_d)^{n+1}\rho^{-1/d(2l-1)}\,,
\end{equation}

\noindent in which  $\rho$ is given by~\eqref{deffinalerho_first} and $C$ is the constant explicitly given by ~\eqref{def_C} below.
\end{theorem}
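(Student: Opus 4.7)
The strategy follows the template of \cite[Theorem~1.4]{Bernik-Kleinbock-Margulis-01:MR1829381}, but with every implicit constant tracked so that the exponent $1/d(2l-1)$ and the prefactor $E$ in \eqref{theo_explicit_1_4_def_E} appear in closed form. The first step is \emph{localisation in $\vv q$}: I would write $A(\delta',K,\vv T)$ as the union over $\vv q\in\Z^n\setminus\{0\}$ with $|q_i|<T_i$ of the slices $A_{\vv q}$ on which the three conditions of \eqref{theo_explicit_1_4_set} hold simultaneously for that particular $\vv q$. The point of $\varepsilon_1$ as defined by \eqref{theo_explicit_1_4_def_e1} is that it is exactly the quantity that makes the Borel--Cantelli style summation $\sum_{\vv q}|A_{\vv q}|_d$ come out as an $\varepsilon_1$-power times $|\cU|_d$ once the restriction \eqref{theo_explicit_1_4_restrictions} is invoked.

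The second step is to use quantitative non-degeneracy to control a single $A_{\vv q}$. Writing $\vv v := \vv q/\|\vv q\|_2$ and $g_{\vv v}(\vv x) := \vv f(\vv x)\cdot\vv v$, Assumption~1 together with \eqref{def_s0} produces an integer $k\le l$ and $\vv u\in\Sph^{d-1}$ with $|\partial^k g_{\vv v}/\partial\vv u^k(\vv x_0)|\ge s_0$ at the centre of $\cU$; the upper bound \eqref{eqn:def_N} on the $(l{+}1)$-th derivative together with the radius restriction \eqref{slv2} then propagates this lower bound to the whole of $\tcUp$ via Taylor's theorem. This is exactly the setting in which the auxiliary parameters $\tau,\rho_1,\rho_2,\rho$ appearing in \eqref{def_rho_one_first}--\eqref{deffinalerho_first} arise: $\tau$ bounds the maximal oscillation of $g_{\vv v}$ on $\cU$, $\rho_1$ controls the measure of its sublevel sets through the inverse-function theorem applied to a $k$-fold antiderivative, and $\rho_2,\rho$ encode the compatibility of this estimate with the small-gradient hypothesis. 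Plugging these into the $(C,\alpha)$-good formalism of Kleinbock--Margulis yields the bound $|A_{\vv q}|_d\le C_1\,\rho^{-1/d(2l-1)}\,\varepsilon_1^{1/d(2l-1)}|\cU|_d$ with an explicit $C_1$.

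The third step is to combine these slice estimates into a global one. I would cover $\cU$ with a Besicovitch family of balls (cost $N_d\le 5^d$) on each of which the $\vv q$ witnessing membership in $A(\delta',K,\vv T)$ can be taken from the shifted lattice $\vv q_0+\Z^n$ for a single $\vv q_0$; this is the $(3^d N_d)^{n+1}$ factor in \eqref{theo_explicit_1_4_def_E}, one power of $3^d N_d$ for each of the $n$ coordinates of $\vv q$ together with an additional power absorbing the $\|\vv f\cdot\vv q\|<\delta'$ localisation modulo $\Z$. Summing over the $\prod T_i$ admissible $\vv q$ and using \eqref{theo_explicit_1_4_restrictions} to turn the $\delta'\prod T_i/\max T_i$ factor into $\varepsilon_1^{n+1}$, then taking the $1/d(2l-1)$-th root that arises from the sublevel bound above, gives the stated estimate with $E$ as in \eqref{theo_explicit_1_4_def_E}.

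The main obstacle will be the explicit bookkeeping: the composition of the $(C,\alpha)$-good sublevel bound with the $(n+1)$-fold covering argument multiplies exponents, and the precise form of $\rho$ in \eqref{deffinalerho_first} is the unique choice that simultaneously makes (i) the Taylor remainder on a ball of radius $r$ satisfying \eqref{slv2} negligible, (ii) the overlap factor in the Besicovitch decomposition fit with the power of $\varepsilon_1$, and (iii) the small-gradient constant $K$ enter the final estimate only through the product $\delta' K\prod T_i/\max T_i$ hidden in $\varepsilon_1$. Any looser choice of $\rho,\rho_1,\rho_2$ would either spoil the exponent $1/d(2l-1)$ or force an additional dependence on $K$ in $E$; tracing the sharp dependence through the auxiliary constants $C_d',C_d'',C_d'''$ of L2.2, as already done in the proof of Theorem~\ref{theo_explicit_1_3}, is the part that requires the most care.
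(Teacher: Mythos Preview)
Your plan has a genuine structural gap: it treats the small-gradient case by decomposing $A(\delta',K,\vv T)=\bigcup_{\vv q}A_{\vv q}$ and then summing the slice bounds over the $\prod_i T_i$ admissible $\vv q$. That scheme works for the big-gradient case (Theorem~\ref{theo_explicit_1_3}), where each $|A_{\vv q}|_d$ is $\lesssim\delta'|\cU|_d$ and the subsequent summation over $\vv q$ occurs later in \S\ref{6} using $\sum_{\vv q}\Psi(\vv q)<\infty$. Here, however, the gradient hypothesis $\|\nabla\vv f(\vv x)\vv q\|_\infty<K$ gives no extra smallness in $\vv q$, so a bound of the shape $|A_{\vv q}|_d\le C_1\rho^{-1/d(2l-1)}\varepsilon_1^{1/d(2l-1)}|\cU|_d$ summed over $\asymp\prod_iT_i$ vectors blows up: there is no mechanism in your Step~3 that legitimately converts $\prod_iT_i$ into a factor already absorbed in $\varepsilon_1$. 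Your sentence ``on each of which the $\vv q$ witnessing membership can be taken from the shifted lattice $\vv q_0+\Z^n$ for a single $\vv q_0$'' is not meaningful as written (any such coset equals $\Z^n$), and your reading of $(3^dN_d)^{n+1}$ as ``one power per coordinate of $\vv q$ plus one for the mod-$\Z$ localisation'' is not a valid accounting.

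The paper's proof does \emph{not} decompose over $\vv q$ at all. Instead it embeds the entire system of inequalities into a single lattice-point condition: one sets $W=\R^{n+d+1}$, takes the rank-$(n{+}1)$ lattice $\Lambda$ of \eqref{def_Lambda}, the map $H(\vv x)=DU_{\vv x}$ of \eqref{def_H}--\eqref{def_D}, and observes that $A(\delta',K,\vv T)$ is contained in $\{\vv x\in\cU:\exists\,\vv v\in\Lambda\setminus\{0\}\text{ with }\nu_*(H(\vv x)\vv v)<\varepsilon_1\sqrt{n+d+1}\}$. The bound then comes in one shot from the quantitative nondivergence Theorem~\ref{theo_BKM} (= \cite[Theorem~6.2]{Bernik-Kleinbock-Margulis-01:MR1829381}); the factor $(n+1)(3^dN_d)^{n+1}$ is precisely $k(3^dN_d)^k$ with $k=\operatorname{rank}\Lambda=n+1$, and arises from the rank-by-rank induction inside that theorem, not from any covering indexed by $\vv q$. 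The constants $\rho_1,\rho_2,\rho$ are the explicit lower bounds needed to verify condition~(ii) of Theorem~\ref{theo_BKM} (lower bounds for $\nu_*(H(\vv x)\Gamma)$ over primitive subgroups $\Gamma$), while the $(C,\alpha)$-good verification for condition~(i) splits into the rank-one case (Corollary~\ref{explicoro3.5}) and the higher-rank case via the skew-gradient Proposition~\ref{proposition_explicit_4_1}. Finally, the constants $C_d',C_d'',C_d'''$ of L2.2 that you invoke belong to the big-gradient Theorem~\ref{theo_explicit_1_3} and play no role here.
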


At first glance the statement of Theorem~\ref{theo_explicit_1_4} looks very similar to \cite[Theorem~1.4]{Bernik-Kleinbock-Margulis-01:MR1829381}. We stress that the key difference is that in our statement the constants are made fully explicit.
The  proof of Theorem~\ref{theo_explicit_1_4} is rather involved and will be the subject of \S\ref{xmas}.

\section{A strengthening and proof of Theorem~\ref{soft} \label{6}}

In view of the discussion of \S\ref{loc}, Theorem~\ref{soft} will follow immediately on establishing a stronger result
(Theorem~\ref{theorem_expicit_1_1} below), which explicitly characterizes the dependence on $\Psi$ and $\cM$ of the constants $\kappa_0$, $C_0$ and $C_1$  appearing within the statement of Theorem~\ref{soft}. In the case that the function $\vv f$ defining the manifold under consideration is explicitly given, the values of these constants may be improved by following the methodology of the proof of Theorem~\ref{theorem_expicit_1_1} as many computations will then be made simpler.

Let
\begin{equation}\label{slv3}
C_{\Psi}:=\sup_{\vv q=(q_1,\dots,q_n)\in\Z^n\setminus\{\vv0\}}\Psi(\vv q)\prod_{i=1}^nq_i^+,\qquad\text{where }q_i^+:=\max\{1,|q_i|\}\,.
\end{equation}
It is a well known fact that, under the assumption that $\Psi$ is monotonically decreasing in each variable, relation~\eqref{def_S_Psi} implies that $0<C_\Psi<\infty$.
Also define the constant
\[
S_n:=\sum_{\vv t\in \Z^n}2^{-\frac{\|\vv t\|_{\infty}}{2d(2l-1)(n+1)}},
\]
which is clearly finite and positive as the sum converges.

\begin{theorem}\label{theorem_expicit_1_1}
Let $\cU\subset\R^d$ be a ball whose radius satisfies~\eqref{slv2} and let  $\vv f=(f_1,\dots,f_n)$ be an $n$--tuple of $C^{l+1}$ functions satisfying  Assumption 1.
Let $\Sigma_{\Psi}$, $L$, $K_d$ and $E$ be given by\eqref{def_S_Psi}, \eqref{theo_explicit_1_3_def_L}, \eqref{def_K_d} and~\eqref{theo_explicit_1_4_def_E} respectively and let
$$
K_0=E\left(\sqrt{n+d+1}\cdot
\left(C_{\Psi}2^{n-1/2}\sqrt{ndL}\right)^{\frac{1}{n+1}}\right)^{1/d(2l-1)}\,.
$$

Given any $\delta>0$, let
\[
\kappa:=\min\left\{\frac{1}{C_{\Psi}2^{n-1/2}\sqrt{ndL}},\ \frac{\delta}{2K_d  \Sigma_{\Psi}},\left(\frac{\delta}{2K_0S_n}\right)^{d(n+1)(2l-1)}\right\}.
\]
Then
\begin{equation}  \label{theorem_expicit_1_1_conclusion_Lebesgue++}
\left|\mathcal{B}_n(\Psi,\kappa,\cM)\right|_d\ge (1-\delta)|\cU|_d.
\end{equation}
\end{theorem}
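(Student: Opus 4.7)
I would estimate the Lebesgue measure of
\[
\cU\setminus\mathcal{B}_n(\Psi,\kappa,\cM)=\bigcup_{\vv q\in\Z^n\setminus\{\vv 0\}}W(\vv q),
\qquad
W(\vv q):=\{\vv x\in\cU:\|\vv f(\vv x)\cdot\vv q\|\le \kappa\Psi(\vv q)\},
\]
and show it is at most $\delta|\cU|_d$. For each $\vv q$ I would split $W(\vv q)=W^{b}(\vv q)\cup W^{s}(\vv q)$ according to whether $\|\nabla\vv f(\vv x)\vv q\|_\infty\ge \sqrt{ndL\|\vv q\|_\infty}$ (``big-gradient'' part) or the reverse inequality holds (``small-gradient'' part). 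The two parts are then handled by Theorem~\ref{theo_explicit_1_3} and Theorem~\ref{theo_explicit_1_4} respectively, with the three entries of the defining minimum for $\kappa$ each ensuring a distinct piece of the estimate.

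For the big-gradient contribution, I would apply Theorem~\ref{theo_explicit_1_3} with $\delta'=\kappa\Psi(\vv q)$ for each fixed $\vv q$, obtaining $|W^{b}(\vv q)|_d\le K_d\kappa\Psi(\vv q)|\cU|_d$, and then sum over $\vv q\in\Z^n\setminus\{\vv 0\}$ to get $K_d\kappa\Sigma_\Psi|\cU|_d$. The second term $\delta/(2K_d\Sigma_\Psi)$ in the defining minimum for $\kappa$ makes this at most $\tfrac{\delta}{2}|\cU|_d$.

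The small-gradient contribution requires a dyadic decomposition of $\Z^n\setminus\{\vv 0\}$ into cells indexed by a multi-index $\vv t$ so that $T_i:=2^{|t_i|}$ controls the dyadic scale of $q_i$ within the cell (with $\|\vv t\|_\infty$ recording the largest active scale). On such a cell, the bound $\Psi(\vv q)\le C_\Psi/\prod_i q_i^+$ yields the uniform majorant $\kappa\Psi(\vv q)\le \kappa\cdot 2^n C_\Psi/\prod_iT_i=:\delta'_{\vv t}$; taking $K_{\vv t}:=\sqrt{ndL\max_iT_i}$, the union $\bigcup_{\vv q\in\text{cell}}W^{s}(\vv q)$ is contained in the set $A(\delta'_{\vv t},K_{\vv t},\vv T)$ of Theorem~\ref{theo_explicit_1_4}. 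A short computation shows that hypothesis~\eqref{theo_explicit_1_4_restrictions} reduces, after substitution, to an inequality of the shape $\kappa C_\Psi 2^n\sqrt{ndL/\max_iT_i}\le 1$, which (since $\max_iT_i\ge 1$) is secured uniformly in $\vv t$ by the first term $(C_\Psi 2^{n-1/2}\sqrt{ndL})^{-1}$ in the defining minimum for $\kappa$.

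The output of Theorem~\ref{theo_explicit_1_4} then yields, for each cell, a bound of $E(\sqrt{n+d+1}\,\varepsilon_1)^{1/d(2l-1)}|\cU|_d$ with $\varepsilon_1\le (\kappa C_\Psi 2^n\sqrt{ndL}/\sqrt{\max_iT_i})^{1/(n+1)}$. Raising to the power $1/d(2l-1)$ produces a cell-dependent factor $2^{-\|\vv t\|_\infty/(2d(2l-1)(n+1))}$, which is exactly the exponent appearing in $S_n$. Summing over $\vv t$ and gathering the $\vv t$-independent constants into $K_0$ gives a total small-gradient bound of $K_0 S_n\,\kappa^{1/(d(2l-1)(n+1))}|\cU|_d$, which the third term in the defining minimum for $\kappa$ forces to be at most $\tfrac{\delta}{2}|\cU|_d$. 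Adding the two contributions proves~\eqref{theorem_expicit_1_1_conclusion_Lebesgue++}. The main obstacle is the dyadic bookkeeping: one must choose the cells so that constraint~\eqref{theo_explicit_1_4_restrictions} is uniformly verified by a single condition on $\kappa$, the uniform majorant $\delta'_{\vv t}$ is tight enough not to destroy summability of the $S_n$-series, and all the exponents in $\varepsilon_1^{1/d(2l-1)}$ combine exactly to reproduce the polynomial rate $\kappa\sim \delta^{d(n+1)(2l-1)}$ claimed in the theorem.
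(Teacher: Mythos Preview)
Your proposal is correct and follows essentially the same approach as the paper: the big/small gradient split handled by Theorems~\ref{theo_explicit_1_3} and~\ref{theo_explicit_1_4}, the dyadic cell decomposition of $\Z^n\setminus\{\vv 0\}$ via $q_i^+$, the use of $C_\Psi$ to get a uniform $\delta'$ on each cell, the verification of~\eqref{theo_explicit_1_4_restrictions} via the first entry of the minimum, and the summation producing $S_n$ all match the paper's argument (the paper indexes cells by $\vv t\in\N_0^n$ with $2^{t_i}\le q_i^+<2^{t_i+1}$ and $T_i=2^{t_i+1}$, which differs from your $T_i=2^{|t_i|}$ only by harmless factors of $2$).
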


Clearly the above is an explicit version of Theorem~\ref{soft} in the case when $\mu$ is Lebesgue measure. The arguments given in the proof of Theorem~\ref{theorem_linear_forms+} are easily adapted to deal with the general situation.

\subsection{Proof of Theorem~\ref{theorem_expicit_1_1} modulo Theorem~\ref{theo_explicit_1_4}}

For $\kappa>0$ and any $\vv q\in\Z^n$, define

$$
A(\kappa;\vv q):=\left\{ \vv x\in \cU :  \|\vv f(\vv x)\cdot\vv q\|<\kappa \Psi(\vv q) ~ \& ~\eqref{theo_explicit_1_3_gradient_big}\text{ holds}\right\}
$$
and
$$
A^c(\kappa;\vv q):=\left\{ \vv x\in \cU :  \|\vv f(\vv x)\cdot\vv q\|<\kappa \Psi(\vv q) ~ \& ~\eqref{theo_explicit_1_3_gradient_big}\text{ does not hold}\right\}\,.
$$
Clearly it suffices to prove that
\begin{equation}  \label{theorem_expicit_1_1_conclusion_Lebesgue}
\left|\bigcup_{\vv q\in\Z^n\setminus\{0\}}A(\kappa;\vv q)\right|_d\le \frac\delta2|\cU|_d\qquad\text{and}\qquad
\left|\bigcup_{\vv q\in\Z^n\setminus\{0\}}A^c(\kappa;\vv q)\right|_d\le \frac\delta2|\cU|_d\,.
\end{equation}

By Theorem~\ref{theo_explicit_1_3} with $\delta'=\kappa\Psi(\vv q)$, we immediately have that $|A(\kappa;\vv q)|_d\le K_d\kappa\Psi(\vv q)|\cU|_d$. Then, summing over all $\vv q\in\Z^n\setminus\{0\}$ gives
\begin{equation}  \label{theorem_expicit_1_1_first_case_result}
\left|\bigcup_{\vv q\in\Z^n\setminus\{0\}}A(\kappa;\vv q)\right|_d\le K_d\Sigma_\Psi\kappa|\cU|_d\le\frac{\delta}{2}\,|\cU|_d.
\end{equation}
Now to establish the second inequality in \eqref{theorem_expicit_1_1_conclusion_Lebesgue}, given an $n$--tuple $\vv t=(t_1,\dots,t_n)\in\N_0^n$, define the set
\begin{equation} \label{theorem_expicit_1_1_union}
A^c_{\vv t}:=\bigcup_{\substack{\vv q=(q_1,\dots,q_n)\in \Z^n\setminus\{\vv0\}\\[1ex] 2^{t_i}\le q_i^+< 2^{t_i+1}}}  \!\!\! A^c(\kappa;\vv q)\,,
\end{equation}
where $q_i^+=\max\{1,|q_i|\}$. Observe that
\begin{equation} \label{theorem_expicit_1_1_union_two}
\bigcup_{\vv q\in \Z^n}A^c(\kappa;\vv q)=\bigcup_{\vv t\in\N_0^n}A^c_{\vv t}\,.
\end{equation}
By \eqref{slv3} and the monotonicity of $\Psi$ in each variable, for every $\vv q=(q_1,\dots,q_n)\in \Z^n\setminus\{\vv0\}$ satisfying the inequalities $2^{t_i}\le q_i^+< 2^{t_i+1}$, we have that
$$
\Psi(\vv q)\le C_\Psi \left(\prod_{i=1}^nq_i^+\right)^{-1}\le C_\Psi \prod_{i=1}^n2^{-t_i}=C_\Psi 2^{-\sum_{i=1}^nt_i}
$$
and
$$
\|\vv q\|_\infty\le 2^{\max_it_i+1}\,.
$$
Now let
$$
\delta'=\kappa C_{\Psi}2^{-\sum_{i=1}^n t_i},\quad K=\sqrt{ndL 2^{\max_it_i+1}}\quad\text{and}\quad T_i=2^{t_i+1}\ (1\le i\le n)\,.
$$
Then, $A^c_{\vv t}$ is easily seen to be contained in the set $A(\delta', K, \vv T)$ defined within Theorem~\ref{theo_explicit_1_4}.
Clearly $T_1, \dots, T_n\ge 1$ and $K>0$. Since $\kappa<C_{\Psi}^{-1}$, we have that $0<\delta'<1$. Finally, \eqref{theo_explicit_1_4_restrictions} is satisfied, since
\begin{align}
\nonumber \frac{\delta' K T_1\cdot\dots\cdot T_n}{\max_{i=1,\dots,n}T_i}
&=\frac{\kappa C_{\Psi}2^{-\sum_{i=1}^n t_i}\sqrt{ndL 2^{\max_it_i+1}} \prod_i2^{t_i+1}}{2^{\max_i t_i+1}}\\[1ex]
\label{slv4}&=\frac{\kappa C_{\Psi}2^{n}\sqrt{ndL}}{2^{(\max_i t_i+1)/2}}
 \; =  \; \frac{\kappa C_{\Psi}2^{n-1/2}\sqrt{ndL}}{2^{\|\vv t\|_\infty/2}}  \; <  \;1\,,
\end{align}
where the last inequality follows from the definition of $\kappa$. Therefore,  Theorem~\ref{theo_explicit_1_4} is applicable and it follows that
$$
|A^c_{\vv t}|_d\le |A(\delta', K, \vv T)|_d\le
E\left(\sqrt{n+d+1}\cdot\varepsilon_1\right)^{1/d(2l-1)}|\cU|_d\,,
$$
where $E$ is given by \eqref{theo_explicit_1_4_def_E} and where, from~\eqref{slv4}, the definition of $\delta'$ and the fact that $\kappa C_{\Psi}<1$,
$$
\varepsilon_1=\max\left(\kappa C_{\Psi}2^{-\sum_{i=1}^n t_i},\left(\frac{\kappa C_{\Psi}2^{n-1}\sqrt{ndL}}{2^{\|\vv t\|_\infty/2}}\right)^{\frac{1}{n+1}}\right)=
\left(\frac{\kappa C_{\Psi}2^{n-1}\sqrt{ndL}}{2^{\|\vv t\|_\infty/2}}\right)^{\frac{1}{n+1}}.
$$
Then, using \eqref{theorem_expicit_1_1_union_two} and summing over all $\vv t\in\N_0^n$, we find that
\begin{eqnarray*}
\left|\bigcup_{\vv q\in \Z^n}A^c(\kappa;\vv q)\right|_d  & \le  &
\sum_{\vv t\in\N_0^n}E\left(\sqrt{n+d+1}\cdot
\left(\frac{\kappa C_{\Psi}2^{n-1}\sqrt{ndL}}{2^{\|\vv t\|_\infty/2}}\right)^{\frac{1}{n+1}}\right)^{1/d(2l-1)}\!\!\! |\cU|_d  \\[1ex]
& =  &  K_0S_n\kappa^{1/d(n+1)(2l-1)} |\cU|_d   \ \le  \ \frac\delta2|\cU|_d\, ,
\end{eqnarray*}
where the latter inequality follows from the definition of $\kappa$. This establishes the secound inequality in \eqref{theorem_expicit_1_1_conclusion_Lebesgue} and thus completes the proof of Theorem~\ref{theorem_expicit_1_1} modulo Theorem \ref{theo_explicit_1_4}.

\section{Proof of Theorem~\ref{theo_explicit_1_4} \label{xmas}}
\label{section_quantitative_BKM}

To establish Theorem~\ref{theo_explicit_1_4}, we will follow the basic strategy set out in the proof of~\cite[Theorem 1.4]{Bernik-Kleinbock-Margulis-01:MR1829381}. We stress that non--trivial modifications and additions are required to make the constants explicit.
To begin with, we state a simplified form of~\cite[Theorem~6.2]{Bernik-Kleinbock-Margulis-01:MR1829381} and, to this end, various notions are now introduced.

Given a finite dimensional real vector space $W$, $\nu$ will denote a submultiplicative function on the exterior algebra $\bigwedge W$;  that is, $\nu$ is  a continuous function from $\bigwedge W$ to $\Rp$ such that
\begin{equation}\label{newnew}
\nu(t\vv w)=|t|  \, \nu(\vv w)   \quad \rm{ and} \quad \nu(\vv u\wedge\vv w)\le \nu(\vv u)\nu(\vv w)
\end{equation}
for any $t\in\R$ and  $\vv u,\vv w\in\bigwedge W$. Given a discrete subgroup $\Lambda$ of $W$ of rank $k\ge 1$, let $\nu(\Lambda):=\nu(\vv v_1\wedge\dots\wedge\vv v_k)$, where $\vv v_1,\dots,\vv v_k$ is a basis of $\Lambda$ (this definition makes sense from the first equation in~\eqref{newnew}). Also, $\cL(\Lambda)$ will denote the set of all non--zero primitive subgroups of $\Lambda$. Furthermore, given $C,\alpha>0$ and $V\subset\R^d$, a function $f: \vv x\in V\mapsto{}f(\vv x)\in \R$ is said to be $(C,\alpha)$--good on $V$ if for any open ball $B\subset{}V$ and any $\epsilon>0$,
\[
	|\{ \vv x\in{}B: |f(\vv x)| < \epsilon \sup_{\vv x\in{}B} | f(\vv x) |\}|_d \ \le\ C\epsilon^\alpha|B|_d.
\]

\begin{theorem}(\cite[Theorem~6.2]{Bernik-Kleinbock-Margulis-01:MR1829381}) \label{theo_BKM}
Let $W$ be a $d+n+1$ dimensional real vector space and $\Lambda$ be a discrete subgroup of $W$ of rank $k$. Let a ball $B=B(\vv{x_0},r)\subset\R^d$ and a map $H:\hcB\rightarrow \mathrm{GL}(W)$ be given, where $\hcB:= 3^{k}B$. Take $C,\alpha>0$, $0<\tilde{\rho}<1$ and let $\nu$ be a submultiplicative function on $\bigwedge W$. Assume that for any $\Gamma\in\cL(\Lambda)$,
\begin{enumerate}
\item[(i)] \label{theo_BKM_condition_Ca} the function $\vv x\to\nu(H(\vv x)\Gamma)$ is $(C,\alpha)$--good on $\hcB$,\\[-1ex]
\item[(ii)] \label{theo_BKM_condition_rho} there exists $\vv x\in B$ such that $\nu(H(\vv x)\Gamma)\ge\tilde{\rho}$, and\\[-1ex]
\item[(iii)] \label{theo_BKM_condition_finite} for all $\vv x\in\hcB$, $\#\{\Gamma\in\cL(\Lambda)\mid\nu(H(\vv x)\Gamma)<\tilde{\rho}\}<\infty$.
\end{enumerate}
Then, for any positive $\varepsilon\le\rho$, one has
\begin{equation}\label{slv1}
\left|\left\{\vv x\in B:\exists~ \vv v\in\Lambda\setminus\{0\} \text{ such that }\nu(H(\vv x)\vv v)<\varepsilon\right\}\right|_d~\le~ k(3^dN_d)^kC\left(\frac{\varepsilon}{\rho}\right)^{\alpha}|B|_d.
\end{equation}
\end{theorem}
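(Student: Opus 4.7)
Plan: The conclusion is the Kleinbock--Margulis quantitative non-divergence theorem in the form adapted by~\cite{Bernik-Kleinbock-Margulis-01:MR1829381}. I would prove it by an induction-on-rank covering argument whose only analytic input is the $(C,\alpha)$-good hypothesis~(i), with~(ii)--(iii) controlling the combinatorics of subgroup selection. The target set to control is
\[
\mathcal{B}_\varepsilon := \bigl\{x\in B : \exists\,\vv v\in\Lambda\setminus\{0\},\ \nu(H(x)\vv v)<\varepsilon\bigr\},
\]
and the workhorse estimate is that for any $\Gamma\in\cL(\Lambda)$ and any ball $B'\subset\hcB$ on which $\sup_{y\in B'}\nu(H(y)\Gamma)\ge\rho$, hypothesis~(i) applied to $\varphi_\Gamma(y):=\nu(H(y)\Gamma)$ immediately yields $|\{y\in B':\varphi_\Gamma(y)<\varepsilon\}|_d\le C(\varepsilon/\rho)^\alpha|B'|_d$. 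The rest of the proof is organising $\mathcal{B}_\varepsilon$ into pieces on which this local estimate applies.

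At each $x\in\mathcal{B}_\varepsilon$, a witness $\vv v$ generates a primitive rank-one subgroup $\langle\vv v\rangle\in\cL(\Lambda)$ with $\varphi_{\langle\vv v\rangle}(x)\le\nu(H(x)\vv v)<\varepsilon\le\rho$ by submultiplicativity. Hypothesis~(iii) makes the family of all $\Gamma\in\cL(\Lambda)$ containing $\langle\vv v\rangle$ and satisfying $\varphi_\Gamma(x)<\rho$ finite, so I can select one of \textbf{maximal rank} $r(x)\in\{1,\dots,k\}$ and call it $\Gamma(x)$. Hypothesis~(ii) then forbids $\varphi_{\Gamma(x)}$ from remaining below $\rho$ on all of $B$, so there is a smallest radius $\tau_x>0$ such that $\sup_{y\in B(x,\tau_x)}\varphi_{\Gamma(x)}(y)\ge\rho$; the prescribed dilation $\hcB=3^kB$ is precisely the safety margin so that concentric triplings of $B(x,\tau_x)$, repeated up to $k$ times in the rank induction, remain inside $\hcB$. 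Two crucial facts can then be checked on $B(x,\tau_x)$: the workhorse estimate bounds the part of $\mathcal{B}_\varepsilon\cap B(x,\tau_x)$ corresponding to vectors trapped inside $\Gamma(x)$ by $C(\varepsilon/\rho)^\alpha|B(x,\tau_x)|_d$, while the maximality of $r(x)$ combined with submultiplicativity forces every witness vector at points close to $x$ to lie inside $\Gamma(x)$ (otherwise adjoining the primitive hull of a rogue witness would produce a strictly larger primitive subgroup still satisfying $\varphi<\rho$, contradicting maximality).

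To globalise, I would apply Besicovitch's covering lemma to $\{B(x,\tau_x):x\in\mathcal{B}_\varepsilon\}$ to extract a subfamily of multiplicity at most $N_d$, stratify by the rank $r(x)\in\{1,\dots,k\}$, and sum. Each rank-raising step in the recursive selection of $\Gamma(x)$ costs one volume-tripling and one Besicovitch application, contributing a factor of $3^dN_d$; since at most $k$ such steps occur the total geometric overhead is $(3^dN_d)^k$, and the outer sum over the $k$ possible values of $r(x)$ contributes the factor $k$, yielding the advertised bound $k(3^dN_d)^kC(\varepsilon/\rho)^\alpha|B|_d$. The principal obstacle will be the maximality--submultiplicativity bookkeeping of the second paragraph: one must verify rigorously that the selection $x\mapsto\Gamma(x)$ is locally stable, so that witnesses stay trapped inside $\Gamma(x)$ across the whole of $B(x,\tau_x)$ rather than merely at $x$, and that iterated Besicovitch extractions at different rank strata do not blow the overlap beyond $(3^dN_d)^k$. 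This is precisely the content of the combinatorial flag lemmas in~\cite[\S6]{Bernik-Kleinbock-Margulis-01:MR1829381}, ultimately going back to~\cite{Kleinbock-Margulis-98:MR1652916}: each ingredient is conceptually clear but the full bookkeeping is delicate.
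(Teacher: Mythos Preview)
The paper does not prove this theorem: it is quoted verbatim as a simplified form of \cite[Theorem~6.2]{Bernik-Kleinbock-Margulis-01:MR1829381} and used as a black box in the deduction of Theorem~\ref{theo_explicit_1_4}. There is therefore no ``paper's own proof'' to compare against.

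That said, your outline is a faithful high-level description of the Kleinbock--Margulis quantitative non-divergence argument as it appears in \cite[\S5--6]{Bernik-Kleinbock-Margulis-01:MR1829381} and originally in \cite{Kleinbock-Margulis-98:MR1652916}: the selection at each bad point of a primitive subgroup of maximal rank with small $\nu$-value, the use of~(ii) to grow a ball until the supremum reaches $\tilde\rho$, the $(C,\alpha)$-good estimate on that ball, and the Besicovitch extraction stratified by rank. You have also correctly identified where the constants $k$, $3^d$ and $N_d$ enter. The one place where your sketch is slightly loose is the claim that maximality plus submultiplicativity forces \emph{all} witnesses on the whole of $B(x,\tau_x)$ to lie in $\Gamma(x)$: in the actual argument this is not literally true, and the induction is organised instead through the notion of \emph{marked points} and a nested family of balls indexed by flags of primitive subgroups, so that at each step one either traps the witness or passes to a strictly larger subgroup on a tripled ball. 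This is exactly the ``delicate bookkeeping'' you flag at the end, and it is the substantive content of \cite[\S6]{Bernik-Kleinbock-Margulis-01:MR1829381}.
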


Theorem~\ref{theo_explicit_1_4} is now deduced from Theorem~\ref{theo_BKM} in the following manner. With respect to the parameters appearing in Theorem~\ref{theo_BKM}, we let  $$W= \R^{n+d+1}$$ and  $$ \nu_{*} \mbox{ be the submultiplicative function introduced in~\cite[\S7]{Bernik-Kleinbock-Margulis-01:MR1829381}}. $$

\noindent There is nothing to gain in formally recalling the definition of $\nu_{*}$.  All we need to know is that $\nu_{*}$ as given in \cite{Bernik-Kleinbock-Margulis-01:MR1829381} has the property that
\begin{equation} \label{property_nu}
\nu_{*}(\vv w)\le\|\vv w\|_2  \qquad \forall  \ \ \vv w\in \textstyle{\bigwedge W}
\end{equation}
and that its restriction to $W$ coincides with the Euclidean norm. Next, the discrete subgroup $\Lambda$ appearing in Theorem~\ref{theo_BKM} is defined as
\begin{equation} \label{def_Lambda}
\Lambda:=\left\{\begin{pmatrix}p\\\bm{0}\\\vv q\end{pmatrix}\in \R^{n+d+1}  :  p\in\Z, \vv q \in \Z^n \right\}.
\end{equation}
Note that it has rank $k=n+1$, therefore the ball $\hcB$ appearing in the statement of Theorem~\ref{theo_BKM} coincides with the ball $\tcU$  defined by~\eqref{def_tcU}. Finally, we let the map $H$ send $\vv x\in \tcU$ to the product of matrices
\begin{equation} \label{def_H}
H({\vv x}):=DU_{\vv x},
\end{equation}
where
\begin{equation} \label{def_Ux}
U_{\vv x}:=\begin{pmatrix}1&0&\vv f( \vv x)\\0&I_d&\nabla\vv f(\vv x)\\0&0&I_n\end{pmatrix}\in \mathrm{SL}_{n+d+1}(\R)
\end{equation}
and  $D$ is the diagonal matrix
\begin{equation} \label{def_D}
D:=\diag\Big(\frac{\varepsilon_1}{\delta'},\underbrace{\frac{\varepsilon_1}{K},\dots,\frac{\varepsilon_1}{K}}_{d \textrm{ times}},
\frac{\varepsilon_1}{T_1},\dots,\frac{\varepsilon_1}{T_n}\Big)
\end{equation}
 defined via  the constants $\delta'$, $K$, $T_1,\dots,T_n$, and $\varepsilon_1$ appearing in Theorem~\ref{theo_explicit_1_4}.

With the above choice of parameters,  on using~\eqref{property_nu}, it is  easily verified that the set $A(\delta',K, \vv T)$ defined by~\eqref{theo_explicit_1_4_set} within the context of Theorem~\ref{theo_explicit_1_4}  is contained in the set on the left--hand side of~\eqref{slv1} with
$\varepsilon:=\varepsilon_1\sqrt{n+d+1}$. The upshot is that Theorem~\ref{theo_explicit_1_4}  follows from Theorem~\ref{theo_BKM} on verifying conditions (i), (ii) and (iii)  therein with appropriate constants $C$, $\alpha$ and $\rho$.  With this in mind, we note that condition (iii) is already established in~\cite[\S7]{Bernik-Kleinbock-Margulis-01:MR1829381} for any $\tilde{\rho}\le1$.  In~\S\ref{cond1-3} below, we will verify the remaining conditions (i) \& (ii) with the following explicit constants~:
\begin{equation} \label{def_C}
C:=\left(\frac{d(n+2)(d(n+1)+2)}{2}\right)^{\alpha/2}\max\left(C_1^*,2C_{d,l}\right),
\end{equation}
where 
\begin{equation}
	C_1^*:=\max\left(\frac{2 M}{s_0 \cdot\sigma(l,d)} ,\,  \frac{2^{d+2}}{V_d} \cdot dl(l+1)\cdot\frac{M}{s_0}\cdot\left(\frac{2l^l+1}{\sigma(l,d)}\right)^{1/l}\right)
\label{def_C_3.5}
\end{equation}
(here, $\sigma(l,d)$ is the quantity defined in~\eqref{def_sigma_l_d}),
\begin{equation} \label{def_Cdl}
C_{d,l}:= \frac{2^{d+1}dl(l+1)^{1/l}}{V_d},
\end{equation}
\begin{equation} \label{def_alpha}
\alpha:= \frac{1}{d(2l-1)}
\end{equation}
and $\tilde{\rho} = \rho$ as defined by~\eqref{deffinalerho_first} (note that $\rho<1$). This will establish Theorem~\ref{theo_explicit_1_4}.

\section{Verifying conditions (i) \& (ii) of Theorem~\ref{theo_BKM}\label{cond1-3}}

Unless stated otherwise, throughout this section,  $\Lambda$ will be  the discrete subgroup given  by~\eqref{def_Lambda} and  $\Gamma\in\cL(\Lambda)$ will be a primitive subgroup of $\Lambda$.  Verifying condition (i)  of Theorem~\ref{theo_BKM} is based on two separate cases~: one when the rank of $\Gamma$ is one and the other case of rank $\ge 2$.

\subsection{ Rank one case of condition (i) \label{900}}

The key to verifying condition (i)  in the  case that $\Gamma$ is of rank one is  the following explicit version of~\cite[Proposition 3.4]{Bernik-Kleinbock-Margulis-01:MR1829381}.  Notice that it and its corollary are themselves independent of rank  and indeed $\Gamma$.

\begin{proposition}
\label{proposition_explicit_3_4}
Let $\cU\subset\R^d$ be a ball, $\mathcal{F}\subset \mathcal{C}^{l+1}\left( \tcUp\right)$ be a family of real valued functions and $\lambda$ and $\gamma$ be positive real numbers such that~:
\begin{itemize}
\item[(1)] the set $\left\{ \nabla{f}\, : \, f\in\mathcal{F}\right\}$ is compact in $\mathcal{C}^{l-1}\left( \tcUp\right)$,\\[0ex]
\item[(2)] $\sup\limits_{f\in\mathcal{F}}\underset{\bm{x}\in \tcUp}{\sup}\left|\partial_{\vv\beta}f(\bm{x})\right| \le \lambda$ for any multi--index $\vv\beta \in \N_0^d$ with $1\le \left|\vv\beta\right| \le l+1$,\\[0ex]
\item[(3)]
$\underset{f\in\mathcal{F}}{\inf} \;\underset{\vv u \in \Sph^{d-1}}{\sup} \; \underset{1\le k \le l}{\sup} \;\left| \frac{\partial^k f}{\partial\vv u^k}(\vv x_0) \right| \ge \gamma$, where $\vv x_0$ is the centre of $\cU$,\\[1ex]
\item[(4)]
$
\displaystyle\frac{\gamma\cdot \sigma(l,d)}{2\cdot3^{n+d+2}\sqrt{d}\lambda}\ge r\,,
$
where $r$ is the radius of $\cU$ as defined in~\eqref{slv2} and $\sigma(l,d)$ is defined in~\eqref{def_sigma_l_d}.
\end{itemize}
Then, for any $f\in\mathcal{F}$, we have that\\
\begin{itemize}
\item[(a)] ~~$f$ is $\left(C_1, \frac{1}{dl} \right)$--good on $\tcU$,
\item[]
\item[(b)] ~~$\left\|\nabla{f}\right\|_{\infty}$ is $\left(C_1, \frac{1}{d(l-1)} \right)$--good on $\tcU$,
\end{itemize}
where
\begin{equation}
	\label{C_explicit_3.4}
C_1=C_1(\gamma,\lambda):=\max\left(\frac{2\lambda}{\gamma\cdot\sigma(l,d)} ,  \frac{2^{d+2}}{V_d} \, dl(l+1)\frac{\lambda}{\gamma}\left(\frac{2l^l+1}{\sigma(l,d)}\right)^{1/l}\right).
\end{equation}
\end{proposition}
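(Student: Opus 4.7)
The plan is to follow the basic strategy of Proposition 3.4 in BKM, but retain quantitative control at every step so that the resulting constant ends up in the explicit form~\eqref{C_explicit_3.4}. First I would use the compactness in $\mathcal{C}^{l-1}(\tcUp)$ from hypothesis~(1) to reduce the claims~(a) and~(b) to an individual function $f\in\mathcal{F}$: because the $(C,\alpha)$-good property is preserved under $\mathcal{C}^{l-1}$-limits once $C$ is made explicit in terms of derivative bounds, there is no loss of uniformity.

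The substantive step is to upgrade the pointwise non-degeneracy at $\vv x_0$ in~(3) into a uniform lower bound over $\tcU$. Let $\vv u_0\in\Sph^{d-1}$ and $k_0\in\{1,\dots,l\}$ realize the supremum in~(3), so that $|\partial^{k_0}f/\partial\vv u_0^{k_0}(\vv x_0)|\ge\gamma$. Hypothesis~(2) bounds the $(k_0{+}1)$-st derivative in direction $\vv u_0$ by $\lambda$, and a single-variable Taylor expansion along $\vv u_0$ shows this large directional derivative persists on a neighbourhood of $\vv x_0$. The role of the constant $\sigma(l,d)$ of~\eqref{def_sigma_l_d} is that of a Markov/Remez-type constant arising from the discretization of $\Sph^{d-1}$: it quantifies how much the direction $\vv u$ may be perturbed on the sphere while keeping some directional derivative of order $\le l$ bounded below. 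The radius constraint~(4) is precisely tuned so that the union of these neighbourhoods covers $\tcUp$, yielding a uniform estimate of the form $|\partial^{k_0}f/\partial\vv u_0^{k_0}(\vv x)|\ge c\,\gamma\,\sigma(l,d)$ for every $\vv x\in\tcU$.

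With such a uniform lower bound in hand, I would invoke a one-dimensional $(C,\alpha)$-good lemma along the lines parallel to $\vv u_0$. The restriction of $f$ to any such line is a univariate function whose $k_0$-th derivative is bounded below by a constant multiple of $\gamma\sigma(l,d)$ and whose $(l{+}1)$-st derivative is bounded above by $\lambda$; the standard one-dimensional good-function lemma (originating in Kleinbock--Margulis) then yields the $(C^{(1)},1/l)$-good property on the line, with $C^{(1)}$ depending explicitly on $\lambda/(\gamma\sigma(l,d))$ and $l$. Integrating via Fubini in the $d-1$ transverse directions, together with a Besicovitch-type covering argument (whence the factor $V_d^{-1}$ and the polynomial dependence on $d,l$), reduces the exponent from $1/l$ to $1/(dl)$ and produces conclusion~(a). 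The two competing regimes—the direct lower-bound estimate of Step~2 versus the one-dimensional good-function estimate—account for the maximum in the definition of $C_1$. Conclusion~(b) then follows by applying~(a) componentwise to the functions $\partial_i f$, which form a family satisfying the hypotheses of~(a) with $l$ replaced by $l-1$; the maximum of finitely many $(C,\alpha)$-good functions is $(C,\alpha)$-good with the same constants.

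The principal obstacle is Step~2: tracking how the Taylor remainder, the spherical-covering constant $\sigma(l,d)$, and the radius bound~(4) combine to preserve a definite fraction of $\gamma$, and then feeding that fraction through the one-dimensional good-function lemma in a form that yields the precise expression for $C_1(\gamma,\lambda)$ stated in~\eqref{C_explicit_3.4}. Essentially all of the combinatorial complexity of the final constant is accumulated at this propagation step; the rest of the proof is routine once explicit one-dimensional estimates are available.
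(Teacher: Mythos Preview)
Your outline misidentifies both the role of $\sigma(l,d)$ and the passage from one dimension to $d$ dimensions, and this is where the argument would break down.

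In the paper, $\sigma(l,d)$ is not a sphere--discretisation or perturbation constant. It is the degradation factor in a rotation lemma (Lemma~\ref{cornerstoneproofprop34}): given a single direction $\vv u_0$ with $|\partial^k f/\partial\vv u_0^k(\vv x_0)|\ge\gamma$, one can find a rotation $S$ of $\R^d$ after which \emph{every} coordinate direction satisfies $|\partial_i^k(f\circ S)(\vv x_0)|\ge\gamma\,\sigma(k,d)$ simultaneously. This lemma is the main new technical ingredient of the section; it is proved by an inductive two--dimensions--at--a--time argument (Lemma~\ref{mainsubslemmacornerstone}) which in turn rests on a Vandermonde--type separation estimate (Lemma~\ref{lemma_intermed}) and the cube--slicing theorem of Ball. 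Once all coordinate partials are bounded below at $\vv x_0$, hypothesis~(4) together with a first--order Taylor bound propagates this to all of $\tcUp$, and then \cite[Lemma~3.3]{Bernik-Kleinbock-Margulis-01:MR1829381} --- which is already a $d$--dimensional statement requiring uniform lower bounds on \emph{all} the $\partial_i^k f$ --- delivers the $(C',1/(dk))$--good property directly. There is no Fubini step, and the factor $V_d^{-1}$ comes from that multidimensional lemma, not from a covering argument.

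Your proposed route of fixing one good direction $\vv u_0$, applying a one--dimensional good--function lemma along the lines parallel to $\vv u_0$, and then integrating in the transverse variables does not close: on each line $L_y$ the one--dimensional estimate compares $\epsilon$ to $\sup_{L_y\cap B}|f|$, which may be much smaller than $\sup_B|f|$ and which varies with $y$, so the line estimates do not recombine into a $(C,1/(dl))$--good bound on $B$. This is exactly why \cite[Lemma~3.3]{Bernik-Kleinbock-Margulis-01:MR1829381} requires control in all coordinate directions, and why the rotation lemma that produces $\sigma(l,d)$ is the genuine crux of the proof rather than a peripheral covering constant.
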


\medskip

\noindent\textit{Remark.}  Hypothesis (2) is  additional to those made in~\cite[Proposition 3.4]{Bernik-Kleinbock-Margulis-01:MR1829381}. In short, it is this ``extra'' hypothesis that yields an explicit formula for the constant~$C_1$. Note that by the  definition of $C_1^* $ as  given by~\eqref{def_C_3.5},  we have that  $$C_1^* =C_1(s_0,M) \, . $$

\medskip

Using the explicit constant $C_1$  appearing in Proposition~\ref{proposition_explicit_3_4}, it is possible to adapt the proof of \cite[Corollary 3.5]{Bernik-Kleinbock-Margulis-01:MR1829381} to give the following statement.

\begin{corollary}
\label{explicoro3.5}
Let $\cU\subset\R^d$ be a ball and  $\vv f=(f_1,\dots,f_n)$ be an $n$--tuple of $C^{l+1}$ functions satisfying  Assumption 1. 
With reference to Proposition~\ref{proposition_explicit_3_4}, let
$$
\gamma:= s_0\qquad \textrm{and }\qquad \lambda:=M.
$$
Then, for any linear combination $f=c_0+\sum_{i=1}^nc_i f_i$ with $c_0,\dots,c_n\in\R$, we have that
\begin{itemize}
\item[(a)] $f$ is $\left(C_1^*, \frac{1}{dl} \right)$--good on $\tcU$,
\item[]
\item[(b)] $\|\nabla f\|_\infty$ is $\left(C_1^*, \frac{1}{d(l-1)} \right)$--good on $\tcU$.
\end{itemize}
\end{corollary}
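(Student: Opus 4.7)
The plan is to apply Proposition~\ref{proposition_explicit_3_4} to the family
\[
\mathcal{F} := \left\{c_0 + \vv f \cdot \vv c : c_0 \in \R,\ \vv c \in \Sph^{n-1}\right\} \subset C^{l+1}(\tcUp),
\]
with $\gamma := s_0$ and $\lambda := M$. Since $(C,\alpha)$-goodness of a function $g$ is preserved under multiplication by any non-zero scalar (both sides of the defining inequality scale by $|t|$), a general linear combination $f = c_0 + \sum c_i f_i$ with $\vv c := (c_1,\dots,c_n) \neq \vv 0$ can be reduced to a member of $\mathcal{F}$ upon dividing by $\|\vv c\|_2$. The trivial case $\vv c = \vv 0$ is handled separately: then $f \equiv c_0$ is constant, so $\{|f|<\ve\sup|f|\}=\emptyset$ for $\ve<1$, and both $f$ and $\|\nabla f\|_\infty\equiv 0$ are $(C,\alpha)$-good for any $C,\alpha>0$.

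Next I would verify the four hypotheses of Proposition~\ref{proposition_explicit_3_4}. For (1), the gradient $\nabla f = \nabla(\vv f \cdot \vv c)$ depends continuously on $\vv c \in \Sph^{n-1}$ and not on $c_0$, so $\{\nabla f : f \in \mathcal{F}\}$ is the image of the compact sphere under a continuous map into $C^{l-1}(\tcUp)$ and is hence compact. For (2), note that for any multi-index $\beta$ with $1 \leq |\beta| \leq l+1$, the partial derivative $\partial_{\vv\beta} f = \sum c_i \partial_{\vv\beta} f_i$ is independent of $c_0$, and by Cauchy--Schwarz together with \eqref{eqn:def_N},
\[
\bigl|\partial_{\vv\beta} f(\vv x)\bigr| \leq \bigl\|\partial_{\vv\beta} \vv f(\vv x)\bigr\|_2 \cdot \|\vv c\|_2 \leq M.
\]
For (3), since $\partial^k f / \partial \vv u^k = \partial^k(\vv f \cdot \vv c) / \partial \vv u^k$ for every $k \geq 1$ and since $\vv x_0 \in \cU \subset \tcUp$, the required lower bound at $\vv x_0$ follows immediately from the definition \eqref{def_s0} of $s_0$ as the measure of $l$-non-degeneracy of $\vv f$ on $\tcUp$. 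Finally, hypothesis (4) with $\gamma = s_0$, $\lambda = M$ is exactly the first bound imposed on $r$ in \eqref{slv2}.

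Having verified the hypotheses, Proposition~\ref{proposition_explicit_3_4} yields that every $f \in \mathcal{F}$ is $\bigl(C_1(s_0, M), 1/(dl)\bigr)$-good and that $\|\nabla f\|_\infty$ is $\bigl(C_1(s_0, M), 1/(d(l-1))\bigr)$-good on $\tcU$. Since $C_1^* = C_1(s_0, M)$ by definition~\eqref{def_C_3.5}, and since $(C,\alpha)$-goodness is unaffected by scaling $f$ by $1/\|\vv c\|_2$ (and similarly for $\|\nabla f\|_\infty$), these conclusions transfer to an arbitrary linear combination $f = c_0 + \sum c_i f_i$, completing the proof. I do not anticipate any substantive obstacle here: the argument is essentially bookkeeping, and the only minor conceptual point is that although the constant $c_0$ is invisible to every derivative-based hypothesis of Proposition~\ref{proposition_explicit_3_4}, the conclusion of $(C,\alpha)$-goodness for $f$ itself (not merely $f - c_0$) is nevertheless a valid output of that proposition.
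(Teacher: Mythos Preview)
Your proof is correct and follows essentially the same approach as the paper: reduce to the normalized family $\mathcal{F}=\{c_0+\vv f\cdot\vv c: c_0\in\R,\ \vv c\in\Sph^{n-1}\}$ by scaling invariance of $(C,\alpha)$-goodness, then verify hypotheses (1)--(4) of Proposition~\ref{proposition_explicit_3_4} via compactness of $\Sph^{n-1}$, Cauchy--Schwarz with \eqref{eqn:def_N}, the definition \eqref{def_s0} of $s_0$, and the radius bound \eqref{slv2}, respectively. The paper cites \cite[Lemma~3.1(a)]{Bernik-Kleinbock-Margulis-01:MR1829381} for the scaling step, whereas you spell it out directly and handle the degenerate case $\vv c=\vv 0$ explicitly; otherwise the arguments coincide.
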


Corollary~\ref{explicoro3.5} allows us to verify condition~(i) of Theorem~\ref{theo_BKM} in the case that $\Gamma$ is a primitive subgroup of $\Lambda$ of rank 1. Indeed, in view of \eqref{def_H} and of the discussions following equations~\eqref{newnew} and~\eqref{property_nu}, $\nu_*(H(\vv x)\Gamma)$ is the Euclidean norm of $H(\vv x)\vv w=DU_{\vv x}\vv w$, where $\vv w$ is a basis vector of $\Gamma$. It is readily seen that the coordinate functions of $H(\vv x)\vv w$ are either constants, or $f(\vv x)$, or $\partial f(\vv x)/\partial x_i$ for some $f=c_0+\sum_{i=1}^nc_i f_i$ with $c_0,\dots,c_n\in\R$. Hence, by Corollary~\ref{explicoro3.5} and ~\cite[Lemma 3.1~(b,d)]{Bernik-Kleinbock-Margulis-01:MR1829381} we obtain that the function $\|H(\cdot)\Gamma\|_\infty$ is $\left(C_1^*, \alpha\right)$--good on $\tcU$, where $\alpha$ is given by \eqref{def_alpha}. In turn,  on using~\cite[Lemma~3.1(c)]{Bernik-Kleinbock-Margulis-01:MR1829381} and the fact that
$$
\frac{1}{\sqrt{n+d+1}}\leq\frac{\|H(\vv x)\vv w\|_{\infty}}{\|H(\vv x)\vv w\|_2}\leq 1,
$$
we have that $ \nu_{*} \left(H(\, . \,  )\Gamma\right)  \  {\rm  \ is \ }  \left(C_1^*(n+d+1)^{\alpha/2},\alpha\right)-{\rm good \ on\ }\tilde\cU.
$ It then follows from~\cite[Lemma~3.1(d)]{Bernik-Kleinbock-Margulis-01:MR1829381} that $$ \nu_{*} \left(H(\, . \,  )\Gamma\right)  \  {\rm  \ is \ }  \left(C,\alpha\right)-{\rm good \ on\ }\tilde\cU.$$

\begin{proof}[Proof of Corollary \ref{explicoro3.5}]
In view of \cite[Lemma 3.1.a]{Bernik-Kleinbock-Margulis-01:MR1829381}, it suffices to prove the corollary under the assumption that  $\left\|\left(c_1, \dots, c_n\right)\right\|_2 =1$. Thus, with reference to Proposition~\ref{proposition_explicit_3_4}, define
$$\mathcal{F}:= \left\{ c_0+\sum_{i=1}^{n}c_i f_i \, : \, \left\|\left(c_1, \dots, c_n\right)\right\|_2 =1\right\}  \, . $$ The corollary will follow on verifying  the four hypotheses of  Proposition~\ref{proposition_explicit_3_4}.  Thus, hypothesis (1) is easily seen to be satisfied. Hypothesis~(2) is a consequence of \eqref{eqn:def_N} and of the Cauchy--Schwarz inequality while  hypothesis (3) follows straightforwardly from the definition of the measure of non--degeneracy $s_0$ in~\eqref{def_s_lb} and~\eqref{def_s0}. Finally, hypothesis (4) is guaranteed by~\eqref{slv2} and the choices of  $\gamma$ and $\lambda$.
\end{proof}

\subsection{Proof of Proposition~\ref{proposition_explicit_3_4}}

The proof of Proposition~\ref{proposition_explicit_3_4} relies on the following lemma~:

\begin{lemma}\label{cornerstoneproofprop34}
Let $f$ be a real--valued function of class $C^k$ ($k\ge 1$) defined in a neighbourhood of $\vv x\in \R^d$ ($d\ge 1$). Assume that there exists an index $1\le i_0\le d$ and a real number $\mu>0$ such that $$\left|\frac{\partial^k f}{\partial x_{i_0}^k}(\vv x)\right|\ge \mu.$$
Then there exists a rotation $S : \R^d\rightarrow\R^d$ such that
$$\left|\frac{\partial^k \left(f\circ S\right)}{\partial x_i^k}(\vv x)\right|\,\ge \, \mu\cdot \sigma(k,d)$$ for all indices $1\le i\le d$, where the quantity $\sigma(k,d)$ is defined in\eqref{def_sigma_l_d}.
\end{lemma}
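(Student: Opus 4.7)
The plan is to build the rotation $S$ as a composition of $d-1$ two-dimensional rotations, one per coordinate direction other than $e_{i_0}$. Without loss of generality assume $i_0=1$. First, apply a rotation $S_2$ in the $(e_1,e_2)$-plane to drive $|\partial_1^k(f\circ S_2)|$ \emph{and} $|\partial_2^k(f\circ S_2)|$ above a multiple of $\mu$. Next, apply a rotation $S_3$ in the $(e_1,e_3)$-plane; this fixes the $e_2$-axis, so the large $\partial_2^k$ is preserved, and at the same time we can drive $|\partial_3^k|$ up (at the cost of shrinking $|\partial_1^k|$ by a further multiplicative factor). Iterating $d-1$ times gives $S:=S_d\circ\dots\circ S_2$, and since each step only inserts one new shrinkage factor while preserving earlier ones, the worst bound among $|\partial_i^k(f\circ S)|$ ends up at least $\mu$ times the $(d-1)$-st power of a single two-dimensional constant $c(k)$. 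Matching $c(k)^{d-1}$ with $\sigma(k,d)=2^{-3k(d-1)/2}\phi(\omega_0,2,k)^{d-1}$ forces $c(k)=2^{-3k/2}\phi(\omega_0,2,k)$, which is the target for the 2D step.

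The heart of the proof is therefore the two-dimensional case: given smooth $\tilde g:\R^2\to\R$ with $|\partial_1^k\tilde g(\vv y_0)|\ge\nu$, find a planar rotation $R_{\theta^*}$ such that both pure $k$-th derivatives of $\tilde g\circ R_{\theta^*}$ at $R_{\theta^*}^{-1}\vv y_0$ have modulus $\ge\nu\cdot c(k)$. Setting $a_j:=\partial_1^j\partial_2^{k-j}\tilde g(\vv y_0)$ and, for $\vv u_\theta=(\cos\theta,\sin\theta)$,
\[
g(\theta):=\frac{\partial^k\tilde g}{\partial\vv u_\theta^k}(\vv y_0)=\sum_{j=0}^k\binom{k}{j}(\cos\theta)^j(\sin\theta)^{k-j}a_j,
\]
the target is $|g(\theta^*)|,|g(\theta^*+\pi/2)|\ge\nu\cdot c(k)$, with $|a_k|\ge\nu$ known. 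The substitution $t=\tan\theta$ on $(-\pi/2,\pi/2)$ gives
\[
g(\theta)=\frac{Q(t)}{(1+t^2)^{k/2}},\qquad g(\theta+\pi/2)=\frac{R(t)}{(1+t^2)^{k/2}},
\]
where $Q(t):=\sum_j\binom{k}{j}a_jt^{k-j}$ and a direct computation gives the reverse-polynomial identity $R(t)=(-t)^kQ(-1/t)$. Thus $Q$ and $R$ are polynomials of degree $\le k$ with $|Q(0)|=|a_k|\ge\nu$ and leading coefficient of $R$ of modulus $\ge\nu$. Restricting to $t\in[-1,1]$ bounds the common denominator by $2^{k/2}$, so it suffices to exhibit $t^*\in[-1,1]$ at which both $|Q(t^*)|$ and $|R(t^*)|$ are $\ge\nu\cdot 2^{k/2}\cdot c(k)$.

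To find such a $t^*$, use a polynomial anti-concentration argument. A Chebyshev-type bound yields $\|R\|_{\infty,[-1,1]}\ge\nu/2^{k-1}$ from the lower bound on its leading coefficient, while $\|Q\|_{\infty,[-1,1]}\ge\nu$ is immediate. For any polynomial $p$ of degree $\le k$ on $[-1,1]$, Lagrange interpolation at $k+1$ points with minimum spacing $\omega$ inside $[-B,B]$, $B=2$, controls the set where $|p|$ is small: the Lagrange kernels produce a Vandermonde factor $\prod_{i<j}|t_i-t_j|$, and inverting the interpolation yields an estimate of the form $|\{t\in[-1,1]:|p(t)|<\|p\|_\infty\cdot\phi(\omega,2,k)\}|\lesssim k\omega$, where $\phi(\omega,2,k)=\omega^{k(k-1)/2}/(2\sqrt{2}\cdot 2^k\cdot(k+1)!)$ is precisely the Vandermonde-exponent constant appearing in~\eqref{def_phi_deltaBk}. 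Applying this to $Q$ and $R$ simultaneously and selecting $\omega=\omega_0$ small enough that the union of the two "small" sets has measure less than $2$ produces the desired $t^*$; tracking the $2^{-3k/2}$ loss (the denominator $(1+t^2)^{k/2}\le 2^{k/2}$ together with the $2^{-(k-1)}$ Chebyshev factor for $R$) gives $c(k)=2^{-3k/2}\phi(\omega_0,2,k)$.

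The main obstacle is the joint anti-concentration step: a single $t^*\in[-1,1]$ must simultaneously avoid the "small" level sets of both $Q$ and of its reverse $R$, and the calibration of $\omega_0$ and of the Lagrange-interpolation bound has to be tight enough that the two bad sets do not exhaust the interval. Once this 2D statement is established with the advertised explicit constant, the $d$-dimensional lemma follows immediately by iterating the 2D rotations in the coordinate planes $\langle e_1,e_m\rangle$ for $m=2,\dots,d$, multiplying the single-step factor $(d-1)$ times to produce $\sigma(k,d)$.
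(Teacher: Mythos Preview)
Your iterative scheme---compose $d-1$ planar rotations in the coordinate planes $\langle e_1,e_m\rangle$---is exactly the induction the paper carries out, and the identification $\sigma(k,d)=\sigma(k,2)^{d-1}$ is correct. The reduction of the $2$D step to a pair of polynomials $Q(t)=\sum_j\binom{k}{j}a_jt^{k-j}$ and its reverse $R(t)=(-t)^kQ(-1/t)$ is also the same idea the paper uses (the paper works with $s=u_2/u_1\in[1/2,1]$ and the pair $\sum w_is^i$, $\sum(-1)^iw_is^{-i}$).

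The gap is in your joint anti--concentration step. The asserted estimate
\[
\big|\{t\in[-1,1]:|p(t)|<\|p\|_\infty\cdot\phi(\omega,2,k)\}\big|\ \lesssim\ k\omega
\]
is false as stated. Take $k=2$ and $p(t)=t^2$; then $\phi(\omega,2,2)=\omega/(48\sqrt2)$, so the sublevel set has measure of order $\omega^{1/2}$, not $O(\omega)$. For $k=1$ the level $\phi(\omega,2,1)$ does not even depend on $\omega$, so the claimed bound cannot hold for small $\omega$. The underlying issue is that the Vandermonde/Lagrange argument (the paper's Lemma~4) gives a pointwise lower bound at $k+1$ \emph{well--spaced} nodes in terms of the coefficient norm $\|\vv w\|_2$, not a measure bound on sublevel sets in terms of $\|p\|_\infty$; a set of measure larger than $k\omega$ need not contain $k+1$ points at pairwise distance $\ge\omega$.

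The paper avoids this by a two--pass argument rather than a union bound. First it applies the Vandermonde lemma to $Q$ on a fixed grid in $[1/2,1]$ to find a single point $t_j$ with $|Q(t_j)|\ge\|\vv w\|_2\,\phi(1/(2k),1,k)$, and then uses a derivative bound to upgrade this point to an \emph{interval} $[t_j-\epsilon,t_j+\epsilon]$ on which $|Q|$ stays at least half as large, with $\epsilon=(1/2k)\phi(1/(2k),1,k)$. Only then, inside (the image under $t\mapsto1/t$ of) this good interval, is the Vandermonde lemma applied a second time, now to the reverse polynomial, with spacing $\omega_0=\epsilon/k$. It is precisely this ``interval first, then point'' structure that forces
\[
\omega_0=\frac{1}{2k^2}\,\phi\!\left(\tfrac{1}{2k},1,k\right)=\big(\sqrt2\,(2k)^{2+k(k-1)/2}(k+1)!\big)^{-1},
\]
which is the specific value appearing in $\sigma(k,d)$. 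Your single--pass measure argument, even if repaired (e.g.\ via a Remez--type sublevel estimate), would produce a different explicit constant and so would not prove the lemma with the stated $\sigma(k,d)$.
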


As the proof of Lemma~\ref{cornerstoneproofprop34} is lengthy, before given it, we show how to deduce Proposition~\ref{proposition_explicit_3_4} from it.

\noindent\textit{Deduction of Proposition~\ref{proposition_explicit_3_4} from Lemma~\ref{cornerstoneproofprop34}.} Let $\vv x_0=(v_1, v_2, \dots, v_d )$ denote the centre of $\cU$. Hypothesis (3) of Proposition~\ref{proposition_explicit_3_4} implies that for any $f\in\mathcal{F}$, there exists a
unit vector $\vv u\in\Sph^{d-1}$ and an index $1\le k \le l$ such that $$\left|\frac{\partial^k f}{\partial \vv u^k}(\vv x_0)\right|\ge \gamma.$$
Even if it means applying a first rotation to the coordinate system that brings the $x_1$ axis onto the line spanned by the vector $\vv u$, it may be assumed, without loss of generality, that the above inequality reads as $$\left| \partial_1^k f (\vv x_0)\right|\ge \gamma.$$
From Lemma~\ref{cornerstoneproofprop34}, up to another rotation of the coordinate system, one can guarantee that $$\left| \partial_i^k f (\vv x_0)\right|\ge \gamma\cdot \sigma(k,d)\,:=\, C_2$$ for all indices $1\le i \le d$.
Now, for a fixed index $i$,  it follows from a Taylor expansion at $ \vv x_0$ that, for any $\bm{x}=(x_1, \dots, x_d ) \in \tcUp$,
\[
\partial_i^k f\left(\bm{x}\right) = \partial_i^k f\left(\vv x_0\right) + \sum_{j=1}^{d} R_j(\vv x;\vv x_0) \left(x_j - v_{j}\right),
\]
where, by hypothesis (2),
$R_j(\vv x;\vv x_0)$ satisfies the inequality
\[
\left|R_j(\vv x;\vv x_0)\right| \, \le\, \underset{\bm{x}\in \tcUp}{\sup}\left|\left(\partial_{j}\circ \partial_i^k\right)f(\bm{x})\right| \le \lambda.
\]
In view of hypothesis~(4), we have furthermore that
$$
\left\| \bm{x} - \vv x_0\right\|_2\leq 3^{n+d+2}r\leq \frac{\gamma\cdot \sigma(l,d)}{2\sqrt{d}\lambda}\leq\frac{C_2}{2\sqrt{d}\lambda}\cdotp
$$
Thus, for all indices $1\le i\le d$,
\begin{align}
\left|\partial_i^k f\left(\bm{x}\right)\right| \, &\ge \, C_2 - \sum_{i=1}^{d}\left|x_j - u_j\right| \lambda \, = \, C_2-\lambda\left\|\bm{x} - \vv x_0\right\|_1 \nonumber\\
&\ge \, C_2 - \lambda \sqrt{d} \left\| \bm{x} - \vv x_0\right\|_2 \
 \ge \, \frac{C_2}{2}\cdotp\label{vb1001}
\end{align}

\noindent Next, observe that any cube circumscribed about $\tcU$ lies inside of $\tcUp$. It then follows on applying~\cite[Lemma 3.3]{Bernik-Kleinbock-Margulis-01:MR1829381} with $A_1 =\lambda$ and $A_2= C_2/2$ that the function $f$ is $\left(C', \frac{1}{dk}\right)$--good on $\tcU$, where
\begin{align*}
C' &:= \frac{2^d}{V_d}dk (k+1)\left(\frac{2\lambda}{\gamma\cdot \sigma(k,d)}(k+1)\left(2k^k+1\right) \right)^{1/k} \\[1ex]
&\le\, \frac{2^{d+2}}{V_d}dk(k+1)\frac{\lambda}{\gamma}\left(\frac{2k^k+1}{\sigma(k,d)} \right)^{1/k}.
\end{align*}
A computation then shows that
\begin{align*}
C' \le\, \frac{2^{d+2}}{V_d}dl(l+1)\frac{\lambda}{\gamma}\left(\frac{2l^l+1}{\sigma(l,d)} \right)^{1/l}.
\end{align*}
Part (a) of Proposition~\ref{proposition_explicit_3_4} is now a consequence of~\cite[Lemma 3.1.d]{Bernik-Kleinbock-Margulis-01:MR1829381}. Regarding  part (b), the proof is  essentially the same as that of~\cite[Proposition~3.4.b]{Bernik-Kleinbock-Margulis-01:MR1829381} with the constant $C$ replaced with the explicit constant $C_1$ given by~\eqref{C_explicit_3.4}.\\[-1ex]
\hspace*{\fill} $\square$

We now proceed with the proof of Lemma~\ref{cornerstoneproofprop34} which requires several intermediate results. The first one is rather intuitive.

\begin{lemma}\label{cubeslice}
Let $C>0$ be a real number and $p\ge 1$ be an integer. Then every section of the cube $[0, C]^p$ with a $(p-1)$--dimensional subspace of $\R^p$ has a volume at most $\sqrt{2}C^{p-1}$.
\end{lemma}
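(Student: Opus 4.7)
The plan is to realize the section as a graph over a coordinate hyperplane and bound its $(p-1)$-volume via the area formula. Write the subspace as $H = \{x \in \R^p : \langle n, x\rangle = 0\}$ for a unit normal $n = (n_1, \dots, n_p) \in \Sph^{p-1}$. After relabelling coordinates and possibly flipping the sign of $n$, one may assume that $|n_p|$ is the largest among $|n_1|, \dots, |n_p|$ and that $n_p > 0$; in particular $n_p \geq 1/\sqrt{p}$. The subspace $H$ is then the graph of $x_p = -\sum_{i=1}^{p-1}(n_i/n_p)\,x_i$.

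The section $H \cap [0,C]^p$ is parametrized over the set
\[
P := \Bigl\{ (x_1, \dots, x_{p-1}) \in [0,C]^{p-1} : 0 \leq -\sum_{i=1}^{p-1} (n_i/n_p)\,x_i \leq C \Bigr\} \;\subseteq\; [0,C]^{p-1}.
\]
A direct Jacobian computation (the matrix determinant lemma yields Gram determinant $1/n_p^2$) shows that this parametrization scales $(p-1)$-volumes by the factor $1/n_p$; equivalently, the cosine of the angle between $H$ and the coordinate hyperplane $\{x_p = 0\}$ equals $n_p$. Therefore
\[
\operatorname{vol}_{p-1}(H \cap [0,C]^p) \;=\; \frac{\operatorname{vol}_{p-1}(P)}{n_p} \;\leq\; \frac{C^{p-1}}{n_p}.
\]
The trivial lower bound $n_p \geq 1/\sqrt{p}$ already gives $\operatorname{vol}_{p-1}(H \cap [0,C]^p) \leq \sqrt{p}\,C^{p-1}$, which settles the cases $p \leq 2$ and establishes the lemma up to the size of the constant.

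The main obstacle is sharpening this to the stated $\sqrt{2}\,C^{p-1}$ in dimension $p \geq 3$ and in the regime $n_p < 1/\sqrt{2}$ (the complementary regime $n_p \geq 1/\sqrt{2}$ being already handled by the projection bound). In that regime the constraint defining $P$ strictly restricts $(x_1, \dots, x_{p-1})$ to a slab of width proportional to $n_p$ inside $[0,C]^{p-1}$, and one has to show $\operatorname{vol}_{p-1}(P) \leq n_p \sqrt{2}\,C^{p-1}$. I would attack this by partitioning the indices $\{1, \dots, p-1\}$ according to the sign of $n_i$, so that $P$ becomes the intersection of the cube with a symmetric double hyperplane slab whose width admits an explicit bound; combined with the Brunn concavity principle for the section function of a convex body, the volume is maximised precisely when only two of the $|n_i|$ are significant, which corresponds to the diagonal planes $\{x_i = x_j\}$ achieving equality $\sqrt{2}\,C^{p-1}$. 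Should this direct argument prove too delicate, the sharp constant $\sqrt{2}$ may alternatively be quoted from Ball's cube-slicing theorem applied to $[0,C]^p$.
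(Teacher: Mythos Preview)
The paper's own ``proof'' is a one-line citation of Ball's cube-slicing theorem \cite{cubeslicing}; no independent argument is given. Your fallback option of quoting Ball is therefore exactly what the paper does.

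Your direct attempt is correct as far as the projection bound $\operatorname{vol}_{p-1}(H\cap[0,C]^p)\le C^{p-1}/n_p$ goes, and this indeed disposes of $p\le 2$ and of the regime $n_p\ge 1/\sqrt{2}$. The gap is in the remaining regime: the sketch via sign-partitioning and ``Brunn concavity'' does not yield the sharp constant without substantially more work. Ball's actual argument is of a different nature --- one writes the section volume as a product of sinc integrals,
\[
\operatorname{vol}_{p-1}(H\cap[0,1]^p)=\frac{1}{\pi}\int_{-\infty}^{\infty}\prod_{j=1}^{p}\frac{\sin(n_j t)}{n_j t}\,\dd t,
\]
and then applies a sharp $L^p$-type inequality for the sinc function to bound this by $\sqrt{2}$. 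The Brunn--Minkowski/concavity route does not by itself isolate the diagonal extremiser; the extremal analysis is genuinely analytic rather than convex-geometric. So your direct approach, as written, does not close, and the honest resolution is the one you already name: cite Ball. That is precisely the paper's proof.
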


\begin{proof}
See~\cite[Theorem 4]{cubeslicing}.
\end{proof}

\begin{lemma}\label{lemma_intermed}
Let $k\ge 1$ denote an integer and let $\vv w:= (w_0, \dots, w_k)\in\R^{k+1}$. Let $\omega, B>0$ be real numbers. Furthermore, assume that the $k+1$ real numbers $0<t_0<\dots<t_k$ satisfy the following two assumptions~:
\begin{itemize}
\item[(1)] $\underset{0\le i\neq j\le k}{\min}\, \left|t_i - t_j\right|\,\ge\, \omega$,\\[0ex]
\item[(2)] $\underset{0\le i \le k}{\max}\, |t_i|^k\,\le\, B$.\\[0ex]
\end{itemize}
Then, there exist an index $0\le j \le k$ such that $$\left|\sum_{i=0}^{k} w_i t_j^i\right|\,\geq\, \phi(\omega, B; k)\cdot\left\|\vv w\right\|_2,$$ where $\phi(\omega, B; k)$ is the quantity defined in~\eqref{def_phi_deltaBk}.
\end{lemma}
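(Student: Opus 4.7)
The plan is to view $P(t) := \sum_{i=0}^{k} w_i t^i$ as a polynomial of degree at most $k$, set $M := \max_{0 \le j \le k} |P(t_j)|$, and derive an upper bound on $\|\vv w\|_2$ in terms of $M$ from which the stated lower bound on $M$ will follow. Since the $k+1$ points $t_0, \dots, t_k$ are distinct, $P$ is uniquely determined by the values $P(t_0), \dots, P(t_k)$, and the Lagrange interpolation formula
\[
P(t) \;=\; \sum_{j=0}^k P(t_j)\, L_j(t), \qquad L_j(t) := \prod_{s \ne j} \frac{t-t_s}{t_j-t_s},
\]
expresses each coefficient $w_i$ as a linear combination $w_i = \sum_j P(t_j)\,\ell_{i,j}$, where $\ell_{i,j}$ denotes the coefficient of $t^i$ in $L_j(t)$. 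Hence $|w_i| \le M \sum_j |\ell_{i,j}|$, and ultimately $\|\vv w\|_2 \le \sqrt{k+1}\,\max_i |w_i|$.

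First I would bound the denominator $\big|\prod_{s \ne j}(t_j - t_s)\big|$ from below. Hypothesis~(1) combined with the sortedness $0 < t_0 < \cdots < t_k$ forces every consecutive gap to be at least $\omega$, and therefore $|t_r - t_s| \ge |r - s|\,\omega$ for all $r \ne s$; this yields
\[
\Big|\prod_{s \ne j}(t_j - t_s)\Big| \;\ge\; \omega^k \cdot j!\,(k-j)!\,.
\]
Next I would bound the coefficients of the numerator by expanding $\prod_{s \ne j}(t - t_s) = \sum_m (-1)^{k-m} e_{k-m}\!\left(\{t_s\}_{s \ne j}\right) t^m$ and invoking hypothesis~(2) in the form $\max_s |t_s| \le B^{1/k}$: the elementary symmetric polynomial $e_{k-m}$ in $k$ variables each of modulus at most $B^{1/k}$ satisfies $|e_{k-m}(\cdots)| \le \binom{k}{k-m} B^{(k-m)/k}$.

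Combining these two estimates produces an explicit upper bound on each $|\ell_{i,j}|$, hence on $|w_i|$ and finally on $\|\vv w\|_2$; after summing over $j$, inverting, and simplifying, one recovers the desired lower bound $M \ge \phi(\omega, B; k)\,\|\vv w\|_2$. An alternative route that may yield cleaner constants is to rewrite the identity $V\vv w = (P(t_0), \dots, P(t_k))^{\top}$ for the Vandermonde matrix $V_{ji} := t_j^i$ and apply Cramer's rule, using the sharp Vandermonde bound $|\det V| = \prod_{s<r}|t_r - t_s| \ge \omega^{k(k+1)/2} \prod_{s<r}(r-s)$ together with Leibniz-type bounds on the cofactor determinants.

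The main obstacle is bookkeeping the constants carefully enough to recover the precise form $\phi(\omega, B; k) = \omega^{k(k-1)/2}/\bigl(2\sqrt{2}\,(k+1)!\,B^k\bigr)$. The naive Lagrange estimate yields an $\omega^{-k}$ dependence in each individual $|\ell_{i,j}|$, so arriving at the stated exponent requires either pooling the $L_j$ estimates using the Vandermonde determinant globally, or an induction on $k$ in which one first controls the leading coefficient via the divided-difference representation $w_k = \sum_j P(t_j)/\prod_{s \ne j}(t_j - t_s)$, which gives $|w_k| \le 2^k M/(\omega^k\, k!)$, and then applies the case $k-1$ to the polynomial $P(t) - w_k t^k$ restricted to the first $k$ nodes; the geometric-like accumulation of $\omega$-losses along the induction is what produces the triangular exponent $k(k-1)/2$.
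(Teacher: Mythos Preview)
Your approach is genuinely different from the paper's. You proceed algebraically via Lagrange interpolation (or equivalently Cramer's rule on the Vandermonde system), aiming to bound each coefficient $w_i$ in terms of $M=\max_j|P(t_j)|$ and then invert. The paper instead gives a short geometric argument: it regards the $k+1$ points $\vv x_j:=(1,t_j,\dots,t_j^k)^T\in\R^{k+1}$ together with the origin as the vertices of a simplex, bounds the simplex volume from below by the Vandermonde determinant (hence by a power of $\omega$ over $(k+1)!$), and bounds every hyperplane section of the simplex from above by Ball's cube--slicing theorem (the $\vv x_j$ lie in $[0,B]^{k+1}$, so any hyperplane slice has $(k)$--volume at most $\sqrt{2}\,B^k$). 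Comparing volume to maximal slice forces some vertex to sit at distance at least $\phi(\omega,B;k)$ from any hyperplane, in particular from $\vv w^{\perp}$; since $\mathrm{dist}(\vv x_j,\vv w^{\perp})=|\sum_i w_i t_j^i|/\|\vv w\|_2$, the lemma follows.

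The trade--off is this. The paper's route is essentially two lines once Ball's theorem is quoted, and it produces the specific constant $\phi$ directly: the $2\sqrt{2}$ and the $B^k$ come straight from the cube--slicing bound, and the $(k+1)!$ from the simplex volume formula. Your route is more elementary (no external theorem) and is the natural thing to try, but as you yourself flag, the bookkeeping is the whole difficulty: the naive per--coefficient Lagrange bound loses $\omega^{-k}$ at each step, and neither the Cramer/Hadamard estimate on the cofactors nor the induction you sketch obviously collapses to the clean form $\omega^{k(k-1)/2}/(2\sqrt{2}(k+1)!B^k)$. In particular, the induction accumulates multiplicative factors of the type $1+2^kB/(\omega^k k!)$ at each stage, which will typically overshoot the stated $\phi$. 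So your plan would establish \emph{a} bound of the right qualitative shape, but matching the paper's explicit constant by this route would require substantially more care than you indicate; the geometric argument buys you that constant for free.
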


The following notation will be used in the course of the proof of Lemma~\ref{lemma_intermed}~: given a point $\vv x\in \R^n$ and a set $A \subset\R^n $, $\textrm{dist}(\vv x,A)$ will denote the quantity
\begin{equation}\label{def_dist}
\textrm{dist}(\vv x,A) := \inf \{ \|\vv x-\vv a\|_2 : \vv a \in A \}.
\end{equation}

\begin{proof}
Let $\vv X:=(\vv x_1, \cdots, \vv x_{k+1})\in M_{k+1, k+1}$ denote the matrix defined by the following $k+1$ column vectors in $\R^{k+1}$~:
\begin{align*}
\vv x_1\, &:=\, (1, t_0, \dots, t_0^k)^T, \\[1ex]
&\vdots\\[1ex]
\vv x_{k+1}\, &:=\, (1, t_k, \dots, t_k^k)^T.
\end{align*}
Together with the origin, these points form a simplex $\mathcal{S}(\vv X)$ in $\R^{k+1}$ whose volume $\left|\mathcal{S}(\vv X)\right|_{k+1}$ satisfies the well--known equation $$\left|\mathcal{S}(\vv X)\right|_{k+1}\,=\,\frac{1}{(k+1)!}\det \begin{pmatrix}\vv x_1 & \dots & \vv x_{k+1} & \vv 0 \\ 1&\dots&1&1\end{pmatrix}.$$
The formula for the determinant of a Vandermonde matrix together with hypothesis (1) then yields the inequality $$\left|\mathcal{S}(\vv X)\right|_{k+1}\, \ge \, \frac{\omega^{k(k-1)/2}}{(k+1)!}\cdotp$$
Note that hypothesis (2) implies that all the vectors $\vv x_1, \cdots, \vv x_{k+1}$ lie in the hypercube $\mathcal{B}:=[0,B]^{k+1}$. As a consequence, the volume of the section of the simplex $\mathcal{S}(\vv X)$ with any hyperplane does not exceed the volume of the section of $\mathcal{S}(\vv X)$ with $\mathcal{B}$ which, from Lemma~\ref{cubeslice}, is at most $\sqrt{2}B^k$. Also, given a hyperplane $\mathcal{P}$, it should be clear that $$\left|\mathcal{S}(\vv X)\right|_{k+1}\,\le \, 2\cdot\underset{1\le j \le k+1}{\max}\textrm{dist}\left(\vv x_j, \mathcal{P}\right)\cdot\left|\mathcal{P}\cap\mathcal{S}(\vv X)\right|_{k}.$$
The upshot of this discussion is that the following inequality holds~:
\begin{align}\label{maxdist}
\underset{1\le j \le k}{\max}\, \textrm{dist}\left(\vv x_j, \mathcal{P}\right) \, \ge \, \frac{\omega^{k(k-1)/2}}{2\sqrt{2}B^k(k+1)!}:= \phi(\omega, B, k).
\end{align}
Consider now the hyperplane $\mathcal{P}=\vv w^{\perp}$ and let $j$ be one of the indices realizing the maximum in~\eqref{maxdist}. The conclusion of the lemma is then a direct consequence of the equation $$\textrm{dist}\left(\vv x_j, \vv w^{\perp}\right)\,=\,\frac{\left|\sum_{i=0}^{k}w_i t_j^i\right|}{\left\|\vv w\right\|_2}\cdotp$$
\end{proof}

The next result contains the main substance of the proof of Lemma~\ref{cornerstoneproofprop34}.

\begin{lemma}\label{mainsubslemmacornerstone}
Let $f$ be a real valued function of class $C^k$ ($k\ge 1$) defined in a neighbourhood of $(x_0, y_0)\in\R^2$. Let $c>0$ be a real number such that $$\left|\frac{\partial^k f}{\partial x^k}(x_0, y_0)\right|\ge c.$$ Then, there exist two orthonormal vectors $\vv u, \vv v \in \Sph^{1}$ such that
\begin{align*}
\min\left\{\left|\frac{\partial^k f}{\partial \vv u^k}(x_0, y_0)\right|, \left|\frac{\partial^k f}{\partial \vv v^k}(x_0, y_0)\right| \right\}\, &\ge \, \frac{c}{2^{3k/2}}\cdot \phi\left(\left( \sqrt{2}(2k)^{2+k(k-1)/2}(k+1)!\right)^{-1}, 2, k \right)\\[1ex]
& = \, c\cdot \sigma(k,2).
\end{align*}
\end{lemma}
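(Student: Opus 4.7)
The plan is to convert the problem to a polynomial one amenable to Lemma~\ref{lemma_intermed} and then to close the argument by a pigeonhole step across $2k+1$ carefully chosen directions. Parameterise unit vectors in $\R^2$ by $\vv u(\theta):=(\cos\theta,\sin\theta)$, so that the orthogonal companion is $\vv v(\theta):=\vv u(\theta+\pi/2)=(-\sin\theta,\cos\theta)$. Setting $b_i:=\partial_x^{k-i}\partial_y^{i}f(x_0,y_0)$ and $c_i:=\binom{k}{i}b_i$, the hypothesis gives $|c_0|=|b_0|\ge c$ and therefore $\|(c_0,\dots,c_k)\|_2\ge c$. The multinomial expansion of the $k$-th directional derivative yields
\[
g(\theta):=\frac{\partial^k f}{\partial\vv u(\theta)^k}(x_0,y_0)=\cos^k\theta\cdot P(\tan\theta),\qquad P(t):=\sum_{i=0}^k c_i\,t^i,
\]
and the same expansion at $\theta+\pi/2$, using $\cos(\theta+\pi/2)=-\sin\theta$ and $\sin(\theta+\pi/2)=\cos\theta$, produces $g(\theta+\pi/2)=\cos^k\theta\cdot\widetilde{Q}(\tan\theta)$, where the coefficient vector of $\widetilde{Q}$ is a signed reversal of $(c_0,\dots,c_k)$ and therefore has the same Euclidean norm, at least $c$.

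Next I would choose $2k+1$ equally spaced angles $\theta_0,\dots,\theta_{2k}$ for which the tangents $t_j:=\tan\theta_j$ lie in $[-2^{1/k},2^{1/k}]$. This enforces hypothesis~(2) of Lemma~\ref{lemma_intermed} with $B=2$, and the equal spacing gives a minimum separation $\omega$ of the $t_j$ that is comfortably larger than the tiny quantity $(\sqrt{2}(2k)^{2+k(k-1)/2}(k+1)!)^{-1}$ appearing in the target lower bound; moreover $|\theta_j|\le\arctan 2$ gives $|\cos\theta_j|^k\ge 2^{-3k/2}$. Now the pigeonhole step: call $t_j$ \emph{bad for $P$} if $|P(t_j)|<\phi(\omega,2,k)\cdot\|(c_0,\dots,c_k)\|_2$. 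If $k+1$ of the $t_j$ were bad, applying Lemma~\ref{lemma_intermed} to that sub-family (which inherits both the separation $\omega$ and the bound $B=2$) would produce an abscissa satisfying the opposite inequality, a contradiction. Hence at most $k$ of the $2k+1$ abscissae are bad for $P$, and by the symmetric argument at most $k$ are bad for $\widetilde{Q}$, so at least one index $j^*$ is good for both. Setting $\vv u:=\vv u(\theta_{j^*})$ and $\vv v:=\vv u(\theta_{j^*}+\pi/2)$, one obtains
\[
\min\bigl\{|g(\theta_{j^*})|,\,|g(\theta_{j^*}+\pi/2)|\bigr\}\ge 2^{-3k/2}\cdot\phi(\omega,2,k)\cdot c,
\]
which, by monotonicity of $\phi$ in its first argument and the definition of $\sigma(k,2)$ in~\eqref{def_sigma_l_d}, matches the required bound.

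The main obstacle is not the polynomial reformulation nor the pigeonhole step as such, but the careful numerical bookkeeping needed in the second paragraph to match the explicit constants. One must verify simultaneously that (a) the tangent bound forcing $B=2$, (b) the cosine bound forcing $|\cos\theta_j|^k\ge 2^{-3k/2}$, and (c) the mutual separation of the $t_j$ being at least the specific value fed into $\phi$ in $\sigma(k,2)$, are jointly realisable. This is feasible because $2^{1/k}>1$ leaves ample room near $\theta=\pi/4$ and the required separation is sub-exponentially small in $k$, but the arithmetic matching the prescribed constants is where the bulk of the careful computation occurs.
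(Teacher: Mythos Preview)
Your approach is sound and takes a genuinely different route from the paper. Both arguments start from the same polynomial reformulation, writing the directional derivative as $\cos^k\theta\cdot P(\tan\theta)$ and observing that the orthogonal direction corresponds to a polynomial whose coefficient vector is a signed reversal of that of $P$. The divergence is in how a common good abscissa is located. The paper applies Lemma~\ref{lemma_intermed} to $P$ on the grid $t_i=1/2+i/(2k)$, thickens the resulting good point to an interval via a derivative bound, pushes that interval through the map $t\mapsto 1/t$ into $[1,2]$, and applies Lemma~\ref{lemma_intermed} a second time to the reversed polynomial on a grid inside the image. Your pigeonhole step --- at most $k$ of $2k+1$ test abscissae can fail for $P$, at most $k$ for $\widetilde Q$, so one survives both --- dispenses with the continuity bridge and the reciprocal map altogether, and since both polynomials are evaluated at the \emph{same} $t=\tan\theta$, the bookkeeping is arguably lighter. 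The cost is that the separation, the bound $B=2$, and the cosine factor must be realised simultaneously on one grid, whereas the paper decouples these across two stages; as you note, the target separation is so small that this is no obstacle.

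One technical slip: Lemma~\ref{lemma_intermed} as stated requires $0<t_0<\dots<t_k$, because its proof places the Vandermonde vectors $(1,t_i,\dots,t_i^k)$ inside the cube $[0,B]^{k+1}$ before invoking Lemma~\ref{cubeslice}. Your choice $t_j\in[-2^{1/k},2^{1/k}]$ therefore does not feed directly into the lemma. The repair is immediate: place all $2k+1$ abscissae in $(0,2^{1/k}]$ (for instance equally spaced as tangents rather than as angles). The required pairwise separation $\bigl(\sqrt{2}(2k)^{2+k(k-1)/2}(k+1)!\bigr)^{-1}$ is so small that $(2k+1)$ such points fit with room to spare, the positivity hypothesis is then met, and the cosine bound $|\cos\theta_j|^k\ge 2^{-3k/2}$ only improves since $0<\theta_j\le\arctan 2$ forces $\cos\theta_j\ge 1/\sqrt{5}>2^{-3/2}$.
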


\begin{proof}
Set $$\vv w= (w_0, \dots, w_k):= \left(\binom{k}{j} \frac{\partial^k f}{\partial x^{k-j}\partial y^j}(x_0, y_0)\right)_{0\le j\le k}\in \R^{k+1}.$$ It readily follows from the assumptions of the lemma that $$\left\|\vv w\right\|_2\ge c.$$
Let $\lambda>0$ be a real number such that, for all indices $0\le j\le k$, $$\left|\frac{\partial^k f}{\partial x^{k-j}\partial y^j}(x_0, y_0)\right|\le \lambda.$$ We thus have the inequality
\begin{align}\label{lambdaauxiliarum}
\left\|\vv w\right\|_2 \le 2^k (k+1)\lambda.
\end{align}

Define now $k+1$ real numbers $t_0, \dots, t_k$ as follows~: $$t_i := \frac{1}{2}+\frac{i}{2k},$$ where $i=0,\dots, k$.

With the choices of the parameters $\omega:=1/(2k)$ and $B=1$, Lemma~\ref{lemma_intermed} applied to the vector $\vv w$ and to the system of points $(t_i)_{0\le i \le k}$ yields the existence of a point $t_j\in [1/2, 1]$ such that
\begin{align}\label{linterinproof}
\left|\sum_{i=0}^{k}w_i t_j^i\right|\, \ge \, \left\|\vv w\right\|_2\cdot \phi\left( \frac{1}{2k}, 1, k\right).
\end{align}

Let
\begin{align}\label{linterinproofbis}
\epsilon := \frac{1}{2k}\cdot \phi\left( \frac{1}{2k}, 1, k\right)\, \le \, \frac{1}{2k}
\end{align}
denote  a constant and $$g : t\in [0,1 ]\mapsto \sum_{i=0}^{k}w_i t^i$$ a function. Note that for all $t\in [0, 1]$, $$\left|�g'(t)\right| = \left|�\sum_{i=1}^{k}w_i i t^{i-1}\right|\, \le \, 2^k\lambda \cdot \frac{k(k+1)}{2}\, = \, 2^{k-1}\lambda k(k+1).$$

This implies that for all $t\in [t_j-\epsilon, t_j+\epsilon]$, where $t_j$ is the constant appearing in~\eqref{linterinproof}, the following inequalities hold~:
\begin{align*}
\left|g(t)\right| = \left|\sum_{i=0}^{k}w_i t^i�\right| &\ge\left|g(t_j)\right| - \left|g(t)-g(t_j)\right|\\[1ex]
&\ge\left|g(t_j)\right| -\epsilon \cdot 2^{k-1}\lambda k(k+1)\\[1ex]
&\underset{\eqref{linterinproof} \& \eqref{linterinproofbis}}{\ge} \phi\left( \frac{1}{2k}, 1, k\right)\cdot \left( \left\|\vv w\right\|_2 - 2^{k-2}\lambda (k+1)\right)\\[1ex]
&\underset{\eqref{lambdaauxiliarum}}{\ge} \frac{\left\|\vv w\right\|_2}{2}\cdot \phi\left( \frac{1}{2k}, 1, k\right).
\end{align*}

Consider now the image $[a, b]\subset [1, 2]$ of the interval $[t_j-\epsilon, t_j+\epsilon]\cap [1/2,1]$ under the map $t\mapsto 1/t$. It is then readily verified that $$|b-a|\,\ge \,\epsilon.$$ With the choices of the parameters $$\omega := \frac{\epsilon}{k}\, =\, \frac{1}{2k^2}\cdot\phi\left(\frac{1}{2k}, 1, k \right)$$ and $B=2$, apply once more Lemma~\ref{lemma_intermed}, this time to the vector $\left( (-1)^i w_i \right)_{0\le i \le k}$ and to the set of points $$\tilde{t}_i = a+\frac{b-a}{k}\cdot i,  \qquad 0\le i \le k.$$ This yields the existence of $\tilde{t}_j\in [a, b]$ such that $$\left|\sum_{i=0}^{k} (-1)^i w_i \tilde{t}_j^i�\right|\, \ge \, \left\|\vv w\right\|_2\cdot \phi\left(\frac{\epsilon}{k}, 2, k\right).$$

The upshot of this is that, when considering the point $s:= 1/\tilde{t}_j$, the following two inequa\-li\-ties hold simultaneously~:
\begin{align*}
 \left|\sum_{i=0}^{k}w_i s^i�\right|\, &\ge \,  \frac{\left\|\vv w\right\|_2}{2}\cdot \phi\left( \frac{1}{2k}, 1, k\right)
\end{align*}
and
\begin{align*}
\left|\sum_{i=0}^{k} (-1)^i w_i s^{-i}�\right|\, &\ge \, \left\|\vv w\right\|_2\cdot \phi\left(\frac{1}{2k^2}\cdot\phi\left(\frac{1}{2k}, 1, k\right), 2, k\right).
\end{align*}
Since $s\in [1/2,1]$, it is easily seen that one can find a unit vector $(u_1, u_2)\in\Sph^1$ such that $s=u_2/u_1$ and $u_1, u_2\in [1/(2\sqrt{2}), 1]$. Let $\vv u\in\Sph^1$ and $\vv v\in\Sph^1$ denote the two orthonormal vectors defined as $\vv u:=(u_1, u_2)$ and $\vv v:=(u_2, -u_1)$.

Note then that
\begin{align*}
\left|\sum_{i=0}^{k}w_i u_1^{k-i}u_2^i�\right|\, = \, u_1^k\left|\sum_{i=0}^{k}w_is^i\right|\, &\ge \, \frac{1}{2^{1+3k/2}}\cdot\left\|\vv w\right\|_2\cdot \phi\left(\frac{1}{2k}, 1, k\right)\\[1ex]
&\ge \, \frac{c}{2^{1+3k/2}}\cdot \phi\left(\frac{1}{2k}, 1, k\right)
\end{align*}
and, similarly,
\begin{align*}
\left|\sum_{i=0}^{k}(-1)^iw_i u_2^{k-i}u_1^i�\right|\, = \, u_2^k\left|\sum_{i=0}^{k}(-1)^i w_i s^{-i}\right|\, &\ge \, \frac{c}{2^{3k/2}}\cdot\phi\left(\frac{1}{2k^2}\cdot\phi\left(\frac{1}{2k}, 1, k\right), 2, k\right).
\end{align*}
Since, from the definition of the vector $\vv w$, $$\frac{\partial^k f}{\partial \vv u^k}(x_0,y_0)\,=\,\sum_{i=0}^{k}w_i u_1^{k-i}u_2^i�$$ and $$\frac{\partial^k f}{\partial \vv v^k}(x_0,y_0)\,=\, \sum_{i=0}^{k}(-1)^iw_i u_2^{k-i}u_1^i,$$ this completes the proof of the lemma from the definition of  $\phi$ in~\eqref{def_phi_deltaBk}.

\end{proof}

We now have all the ingredients at our disposal to prove Lemma~\ref{cornerstoneproofprop34}.

\noindent\textit{Proof of Lemma~\ref{cornerstoneproofprop34}.} Denote the coordinates of the vector $\vv x\in\R^d$ as $\vv x = (x_1, \dots, x_d)$. Even if it means relabeling the axes, assume furthermore without loss of generality that $i_0=1$ in the statement of the lemma. The proof then goes by induction on $d\ge 1$, the conclusion being trivial when $d=1$. When $d=2$, Lemma~\ref{cornerstoneproofprop34} reduces to Lemma~\ref{mainsubslemmacornerstone}. Assume therefore that $d\ge 3$. It then readily follows from the induction hypothesis applied to the function $(x_1, \dots, x_{d-1})\in\R^{d-1}\mapsto f(x_1, \dots, x_{d-1}, x_d)$ that there exists a rotation $S_1 : \R^d \rightarrow \R^d$ such that $$\left|\frac{\partial^k (f\circ S_1)}{\partial x_i^k}(\vv x)\right|\,\ge \, \mu\cdot \sigma(k, d-1).$$
Consider now the function $(x_1, x_d)\in\R^2\mapsto f(x_1, \dots, x_{d-1}, x_d)$. Applying Lemma~\ref{mainsubslemmacornerstone} to this function with $c=\mu\cdot\sigma(k,d-1)$ therein provides the existence of a rotation $S_2 : \R^d\mapsto \R^d$ acting on the plane $(x_1, x_d)$ and leaving its orthogonal unchanged such that $$\min \left\{\frac{\partial (f\circ S_1 \circ S_2)}{\partial x_1^k}(\vv x), \, \frac{\partial (f\circ S_1 \circ S_2)}{\partial x_d^k}(\vv x) \right\}\,\ge \, \mu\cdot \sigma(k,d-1)\cdot\sigma(k,2)\,=\,\mu\cdot\sigma(k,d).$$
The lemma follows upon setting $S=S_1\circ S_2$.\hspace*{\fill} $\square$

\subsection{Higher rank case of condition (i)}  The key to verifying condition (i) of Theorem~\ref{theo_BKM} in the  case when $\Gamma$ is of rank greater than one is  Proposition \ref{proposition_explicit_4_1} below. In short, it is  an explicit version of~\cite[Proposition 4.1]{Bernik-Kleinbock-Margulis-01:MR1829381} in the particular case when the set $\mathcal{G}$ appearing therein is given by
\begin{equation}\label{defg}
\mathcal{G}:=\left\{\left(\vv u_1 \cdot \vv f , \vv u_2 \cdot \vv f+u_0\right) \; : \; u_0\in\R,\, \bm{u}_1, \bm{u}_2\in\R^n, \, \bm{u}_1\perp\bm{u}_2 \right\}.
\end{equation}
The statement is concerned with the \emph{skew gradient} of a map as defined  in~\cite[\S4]{Bernik-Kleinbock-Margulis-01:MR1829381}. We recall the definition.
Let $\vv g=(g_1,g_2):\tcUp\rightarrow\R^2$ be a differentiable function. The skew gradient $\widetilde{\nabla}\vv g:\tcUp\rightarrow\R^2 $ is defined by
$$
\widetilde{\nabla}\vv g(\vv x):=g_1(\vv x)\nabla g_2( \vv x)-g_2(\vv x)\nabla g_1(\vv x).
$$
If we write   $\vv g(\vv x)$ in terms of polar coordinates; i.e.~via the usual functions $\rho(\vv x)$ and $\theta(\vv x)$, it is then readily verified that
\begin{align}\label{latest}
\widetilde{\nabla}\vv g(\vv x) = \rho^2(\vv x) \nabla \theta(\vv x).
\end{align}
Essentially, the skew gradient measures how different  the pair of  functions $g_1 $ and $g_2$ are from being proportional to each other.

\begin{proposition}\label{proposition_explicit_4_1}
Let $\cU\subset\R^d$ be a ball and  $\vv f=(f_1,\dots,f_n)$ be an $n$--tuple of $C^{l+1}$ functions satisfying  Assumption 1. Let $\rho_2$, $C_{d,l}$ and $\cG$  be  given by $\eqref{def_rho_two_first} $, $ \eqref{def_Cdl}$ and $ \eqref{defg} $ respectively.
Then,
\begin{itemize}

\item[(a)~] for all $\bm{g}\in\mathcal{G}$, $$\|\tilde{\nabla}\bm{g}\|_2  \quad  is \quad  \left(2C_{d,l}, \frac{1}{d(2l-1)} \right)-good \ on \ \tcU$$  \\[2ex]
\item[(b)~] for all $\bm{g}\in\mathcal{G}$,
\begin{equation}\label{inegrhogb}
\underset{\bm{x}\in \cU}{\sup} \; \| \tilde{\nabla}\bm{g}(\bm{x})\|_2 \, \ge \, \rho_2 \, .
\end{equation}

\end{itemize}
\end{proposition}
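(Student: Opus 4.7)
The plan is to prove Proposition~\ref{proposition_explicit_4_1} by adapting the argument of~\cite[Proposition~4.1]{Bernik-Kleinbock-Margulis-01:MR1829381} while keeping explicit track of all constants. Observe first that for $\vv g=(g_1,g_2)\in\cG$, both $g_1=\vv u_1\cdot\vv f$ and $g_2=\vv u_2\cdot\vv f+u_0$ are affine combinations of $1,f_1,\dots,f_n$, so each coordinate of the skew gradient, namely $\tilde{\nabla}_i\vv g=g_1\partial_i g_2-g_2\partial_i g_1$, is a bilinear expression in the $f_j$'s and their first partials.

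For part~(a), the idea is to show that each such scalar coordinate function is $(C_{d,l},1/d(2l-1))$-good on $\tcU$, and then to pass to the Euclidean norm $\|\tilde{\nabla}\vv g\|_2$ at the cost of a factor of $2$ via \cite[Lemma~3.1(c,d)]{Bernik-Kleinbock-Margulis-01:MR1829381} together with the comparison $\|\cdot\|_\infty\le\|\cdot\|_2\le\sqrt{d}\,\|\cdot\|_\infty$. To establish goodness of each coordinate, I would apply \cite[Lemma~3.3]{Bernik-Kleinbock-Margulis-01:MR1829381}: it suffices to exhibit a multi-index of order at most $2l-1$ along which some directional derivative of $g_1\partial_i g_2-g_2\partial_i g_1$ is bounded below in absolute value on $\tcUp$. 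The non-degeneracy assumption, recorded through \eqref{def_s0}--\eqref{eqn:def_N}, applied jointly to $g_1$ and $g_2$ produces such a lower bound: after $k_1+k_2+1$ suitable derivatives, where $k_1,k_2\le l$ are the orders at which $g_1,g_2$ are non-degenerate, one term dominates and is controlled by $s_0$ and $M$. The critical point is that the total order of differentiation needed never exceeds $2l-1$, which accounts for the exponent $1/d(2l-1)$ in the goodness conclusion. The constant $C_{d,l}$ in \eqref{def_Cdl} is then extracted by tracking the constants in the proof of \cite[Lemma~3.3]{Bernik-Kleinbock-Margulis-01:MR1829381} with the polynomial degree replaced by $2l-1$.

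For part~(b), I would exploit the polar-coordinate identity \eqref{latest}, which says $\tilde{\nabla}\vv g=\rho^2\nabla\theta$ where $(\rho,\theta)$ are the polar coordinates of $\vv g$. Following \cite[\S4]{Bernik-Kleinbock-Margulis-01:MR1829381}, I would fix the centre $\vv x_0$ of $\vv U$ and use Assumption~1 to produce a unit direction $\vv u\in\Sph^{d-1}$ and an integer $k\le l$ for which a suitable directional derivative of $g_1$ (or of $g_2$) at $\vv x_0$ has modulus at least of order $s_0$. A Taylor expansion of $g_2$ (respectively $g_1$) along $\vv u$, truncated at order $l$ and controlled using the bound $M$ from~\eqref{eqn:def_N}, shows that the polar angle $\theta$ of $\vv g$ must change significantly on a small subinterval of $\vv U$, yielding a point where $\rho^2|\nabla\theta|$, and hence $\|\tilde{\nabla}\vv g\|_2$, is bounded below by $\rho_2$. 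The exact value~\eqref{def_rho_two_first} emerges from combining the intermediate quantity $\tau=r^ls_0/(4l^l(l+1)!)$ arising from the Taylor lower bound, the bound $(\tau/M)^{l-1}$ on the lower-order derivative of $g_2$, and the geometric factor extracted from the definition of the skew gradient.

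The main obstacle will be part~(b): the delicate balancing act of extracting the precise expression~\eqref{def_rho_two_first} for $\rho_2$ from the Taylor expansion, and ensuring that the radius condition~\eqref{slv2} on $r$ is compatible with the estimates being maintained throughout~$\tcUp$ (rather than only at the centre). Part~(a) is conceptually more routine once \cite[Lemma~3.3]{Bernik-Kleinbock-Margulis-01:MR1829381} is invoked with explicit constants, although the combinatorics of passing from each coordinate $\tilde{\nabla}_i\vv g$ to the Euclidean norm $\|\tilde{\nabla}\vv g\|_2$ while preserving the factor $2C_{d,l}$ also requires careful bookkeeping.
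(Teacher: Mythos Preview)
Your approach to part~(a) has a genuine gap. You propose to show directly that each coordinate $\tilde{\nabla}_i\vv g = g_1\partial_i g_2 - g_2\partial_i g_1$ is $(C_{d,l},1/d(2l-1))$-good by applying \cite[Lemma~3.3]{Bernik-Kleinbock-Margulis-01:MR1829381}, i.e.\ by exhibiting a directional derivative of order at most $2l-1$ that is bounded below on $\tcUp$. But the claim that ``after $k_1+k_2+1$ suitable derivatives \dots\ one term dominates'' is not justified and is, in fact, the whole difficulty. Differentiating the bilinear expression $g_1\partial_i g_2 - g_2\partial_i g_1$ to high order produces a large sum of cross terms of the form $(\partial^a g_1)(\partial^b g_2)$, and non-degeneracy of $g_1$ and $g_2$ separately does not, by itself, prevent cancellation among these. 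There is no obvious mechanism forcing a single term to dominate uniformly over $\tcUp$, so \cite[Lemma~3.3]{Bernik-Kleinbock-Margulis-01:MR1829381} cannot be applied in this direct manner.

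The paper takes a completely different route for part~(a), following the architecture of \cite[Proposition~4.1(a)]{Bernik-Kleinbock-Margulis-01:MR1829381}. Rather than differentiating the skew gradient directly, it establishes the two conditions \cite[Eq.~(4.5a) \& (4.5b)]{Bernik-Kleinbock-Margulis-01:MR1829381} with explicit constants $c=s_0/2$, $\delta=\eta$, $\alpha=32/(10^7\sqrt d)$, using the radius restriction~\eqref{slv2}. The argument in \cite{Bernik-Kleinbock-Margulis-01:MR1829381} then proceeds via a geometric dichotomy for the polynomial approximation $\vv p$ of $\vv g$: either the image $\vv p(B)$ stays close to some line (and one reduces to the one-function goodness already handled), or it does not, in which case an explicit version of \cite[Lemma~4.3]{Bernik-Kleinbock-Margulis-01:MR1829381} (Lemma~\ref{explilem4.3} in the paper) provides a direct lower bound on $\|\tilde{\nabla}\vv p\|_2$. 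It is this second branch, not a derivative count, that produces the exponent $1/d(2l-1)$ and the constant $C_{d,l}$.

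For part~(b) your sketch points in the right direction but omits the two-step structure actually used: first locate $\vv y\in\tfrac12\cU$ with $\|\vv g(\vv y)\|_2\ge\tau$ via \cite[Lemma~3.6]{Bernik-Kleinbock-Margulis-01:MR1829381}; then, on the smaller ball $B'$ of radius $\tau/(2M)$ about $\vv y$, apply the same lemma to $\vv v\cdot\vv g$ with $\vv v\perp\vv g(\vv y)$; finally invoke \cite[Lemma~4.2]{Bernik-Kleinbock-Margulis-01:MR1829381}, whose explicit output is exactly the expression~\eqref{def_rho_two_first}. Your informal ``the polar angle must change significantly'' is precisely what Lemma~4.2 quantifies, but without naming it you have not indicated how $\rho_2$ is actually obtained.
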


\bigskip

This proposition together with  Corollary~\ref{explicoro3.5} and the basic properties of $(C, \alpha)$--good functions given in \cite[Lemma~3.1]{Bernik-Kleinbock-Margulis-01:MR1829381}  enables us to deduce the following statement, which establishes condition (i)  in the higher rank case.

\begin{corollary}
Let $\cU\subset\R^d$ be a ball and  $\vv f=(f_1,\dots,f_n)$ be an $n$--tuple of $C^{l+1}$ functions satisfying  Assumption 1.
Let $\Lambda$ be the discrete subgroup given  by~\eqref{def_Lambda} and  $\Gamma\in\cL(\Lambda)$ be a primitive subgroup of $\Lambda$.    Furthermore, let  $H$ be the map given by~\eqref{def_H}. Then, the function
\begin{equation} \label{ez1}
\vv x \mapsto \nu_{*} \left(H(\vv x)\Gamma\right)
\end{equation}
is $\left(C, \alpha\right)$--good on the ball $\tcU$ with constants $C$ and $\alpha$ given by~\eqref{def_C} and~\eqref{def_alpha} respectively.
\end{corollary}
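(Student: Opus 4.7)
The plan is to split the argument according to the rank $k$ of the primitive subgroup $\Gamma\in\cL(\Lambda)$, which must satisfy $1\le k\le n+1$. In both cases, we reduce the verification of the $(C,\alpha)$-goodness of $\vv x\mapsto\nu_*(H(\vv x)\Gamma)$ to explicit goodness statements for auxiliary scalar functions on $\tcU$, and then assemble them through the structural properties of $(C,\alpha)$-good functions collected in \cite[Lemma~3.1]{Bernik-Kleinbock-Margulis-01:MR1829381}.

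For the rank one case ($k=1$), the argument has already been sketched in~\S\ref{900}. Writing the generator of $\Gamma$ as $\vv w=(p,\vv 0,\vv q)^{T}\in\Lambda$, the coordinates of $H(\vv x)\vv w=DU_{\vv x}\vv w$ are either constants, or scalar multiples of a function of the form $f=c_0+\sum c_if_i$, or one of its partial derivatives $\partial_j f$. Corollary~\ref{explicoro3.5} supplies the $(C_1^*,1/(dl))$- and $(C_1^*,1/(d(l-1)))$-goodness of $f$ and $\|\nabla f\|_\infty$ respectively on $\tcU$, so \cite[Lemma~3.1(b,d)]{Bernik-Kleinbock-Margulis-01:MR1829381} together with the identification of $\nu_*$ with the Euclidean norm on $W$ yields that $\|H(\cdot)\vv w\|_\infty$ is $(C_1^*,\alpha)$-good. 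Passing from the infinity norm to the Euclidean norm via the equivalence $\|\cdot\|_\infty\le\|\cdot\|_2\le\sqrt{n+d+1}\|\cdot\|_\infty$ and \cite[Lemma~3.1(c)]{Bernik-Kleinbock-Margulis-01:MR1829381} gives the $(C_1^*(n+d+1)^{\alpha/2},\alpha)$-goodness of $\nu_*(H(\cdot)\Gamma)$ on $\tcU$, which is dominated by the constant $C$ in~\eqref{def_C}.

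For $2\le k\le n+1$, we follow the BKM blueprint from \cite[Proposition~4.1]{Bernik-Kleinbock-Margulis-01:MR1829381}. In this regime the quantity $\nu_*(H(\vv x)\vv v_1\wedge\cdots\wedge H(\vv x)\vv v_k)$, for a basis $\vv v_1,\dots,\vv v_k$ of $\Gamma$, is governed (via the construction of $\nu_*$ in \cite[\S7]{Bernik-Kleinbock-Margulis-01:MR1829381}) by Euclidean norms of skew gradients $\|\widetilde\nabla\vv g\|_2$ for pairs $\vv g\in\cG$ built from $\vv v_1,\dots,\vv v_k$, multiplied by lower-order scalar quantities of the type already treated in the rank one case. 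Proposition~\ref{proposition_explicit_4_1}(a) provides the crucial $(2C_{d,l},\alpha)$-goodness of every such $\|\widetilde\nabla\vv g\|_2$ on $\tcU$, while Corollary~\ref{explicoro3.5} handles the remaining factors with exponent at least $\alpha$. Combining finitely many $(C,\alpha)$-good functions via the sum/max/scaling rules of \cite[Lemma~3.1]{Bernik-Kleinbock-Margulis-01:MR1829381} and exploiting the submultiplicativity of $\nu_*$, one bounds $\nu_*(H(\vv x)\Gamma)$ by a fixed number of products of the scalar ingredients above; the combinatorial factor arising from this bookkeeping is absorbed into the prefactor $(d(n+2)(d(n+1)+2)/2)^{\alpha/2}$ in~\eqref{def_C}. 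Taking the overall constant to be the maximum of the two rank-case constants then gives the stated value of~$C$.

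The main obstacle is the bookkeeping in the higher-rank case: one must carefully propagate submultiplicativity of $\nu_*$ across all exterior products $H(\vv x)\vv v_{i_1}\wedge\cdots\wedge H(\vv x)\vv v_{i_k}$ while simultaneously uniformising the goodness exponents to the common value $\alpha=1/(d(2l-1))$ dictated by the skew gradient contribution (the slowest exponent in play). Once every scalar ingredient has been upgraded to this common $\alpha$ via \cite[Lemma~3.1(d)]{Bernik-Kleinbock-Margulis-01:MR1829381}, a single application of the max/sum rules yields the desired $(C,\alpha)$-goodness of $\nu_*(H(\cdot)\Gamma)$ on $\tcU$.
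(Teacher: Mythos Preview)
Your rank-one argument matches the paper's and is fine. The gap is in the higher-rank case, in the mechanism you invoke to pass from the scalar building blocks to $\nu_*(H(\vv x)\Gamma)$.

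You write that, ``exploiting the submultiplicativity of $\nu_*$, one bounds $\nu_*(H(\vv x)\Gamma)$ by a fixed number of products of the scalar ingredients''. This does not work. Submultiplicativity only gives an \emph{upper} bound $\nu_*(\vv u\wedge\vv w)\le\nu_*(\vv u)\nu_*(\vv w)$, and an upper bound does not transfer $(C,\alpha)$-goodness; moreover, products of $(C,\alpha)$-good functions are not in general $(C,\alpha)$-good, so even a two-sided comparison with products could not be assembled through \cite[Lemma~3.1]{Bernik-Kleinbock-Margulis-01:MR1829381}. What the paper uses instead is the precise structural identity from \cite[\S7, Eq.~(7.3)]{Bernik-Kleinbock-Margulis-01:MR1829381}: for $k\ge2$ one has $\nu_*(H(\vv x)\Gamma)=\|\vv w(\vv x)\|_2$, where $\vv w(\vv x)$ has $L_d(k):=(k+1)(dk+2)/2$ coordinates in a fixed orthonormal basis, each coordinate being \emph{exactly} one of six explicit types: a constant, $a+b\,\vv f\cdot\vv v_0$, $b\,\vv f\cdot\vv v_i$, $b\mu\,\partial_s(\vv f\cdot\vv v_i)$, or a single skew-gradient component $\mu X(i,s)$ or $b\mu Y(i,j,s)$. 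These are coordinates, not factors. Corollary~\ref{explicoro3.5} and Proposition~\ref{proposition_explicit_4_1}(a) make the appropriate sup over $s$ of each type $(C',\alpha)$-good on $\tcU$; Lemma~3.1(b) then handles the max of finitely many such, and two applications of Lemma~3.1(c) convert between $\|\cdot\|_\infty$ and $\|\cdot\|_2$, first on $\R^d$ (cost $d^{\alpha/2}$) and then on $\R^{L_d(k)}$ (cost $L_d(k)^{\alpha/2}$). Since $k\le n+1$, this produces exactly the prefactor $\big(d(n+2)(d(n+1)+2)/2\big)^{\alpha/2}$ in~\eqref{def_C}. No products and no submultiplicativity enter.
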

\begin{proof}
Let $k$ denote the rank of $\Gamma $.   The case $k=1$ has already been established as a consequence of Corollary~\ref{explicoro3.5} in \S\ref{900}.  Assume therefore that $ k \geq 2$. It is shown  in~\cite[\S7 Eq(7.3)]{Bernik-Kleinbock-Margulis-01:MR1829381}  that there exist real numbers $a,b,\mu\in\R$ such that, for all $\vv x\in \tcU$,  $\nu_{*} \left(H(\vv x  )\Gamma\right)  $ given by  \eqref{ez1} can be expressed as the Euclidean norm of a vector $\vv w (\vv x)$. Furthermore, there exists an orthonormal system of vectors of the form $\mathcal{S}=\left\{\vv e_0, \vv e_1^*, \dots, \vv e_d^*, \vv v_1, \dots, \vv v_{k-1} \right\}$ when $k\le n$ or of the form $\mathcal{S}=\left\{\vv e_0, \vv e_1^*, \dots, \vv e_d^*, \vv v_0, \dots, \vv v_{k-1} \right\}$ when $k=n+1$ such that $\vv w (\vv x)$ is a linear combination of $$L_d(k):= \frac{(k+1)(dk+2)}{2}$$ skew products of elements of $\mathcal{S}$ whose coefficients are of any of the following form~:
\begin{align}
\label{ez_class_1} a+b\vv f\cdot \vv v_0 \qquad \\[1ex]
\label{ez_class_2} b \qquad \\[1ex]
\label{ez_class_3} b\,\vv f\cdot \vv v_i\qquad  & (1\le i\le k-1)\\[1ex]
\label{ez_class_4} b\,\mu\,\partial_s(\vv f\cdot\vv v_i)\qquad  & (1\le i \le k, \; 1\le s \le d)\\[1ex]
\label{ez_class_5} \mu \, X(i,s)  \qquad &(1\le i\le k-1,\; 1\le s \le d)\\[1ex]
\label{ez_class_6} b\,\mu \,Y(i,j,s)  \qquad &(1\leq i < j\leq k-1,\; 1\le s \le d),
\end{align}
where $$X(i,s):= \left(\vv f\cdot\vv v_i\right) \partial_s\left(a+b\vv f\cdot\vv v_0\right) - \left(a+b\vv f\cdot\vv v_0\right) \partial_s \left(\vv f\cdot\vv v_i\right)$$ and $$Y(i,j,s):= \left(\vv f\cdot\vv v_i\right) \partial_s\left(\vv f\cdot\vv v_j\right) -  \left(\vv f\cdot\vv v_j\right) \partial_s\left(\vv f\cdot\vv v_i\right).$$

\medskip

\begin{enumerate}
\item[$\bullet$]
It follows from part (a) of Corollary~\ref{explicoro3.5} and \cite[Lemma~3.1(a,d)]{Bernik-Kleinbock-Margulis-01:MR1829381} that the  coordinate functions given by ~\eqref{ez_class_1},~\eqref{ez_class_2} and~\eqref{ez_class_3}   are $(C',\alpha)$--good, where
$$
C':= \frac{C}{\left(L_d(n+1)\cdot d\right)^{\alpha/2}}\cdotp
$$
\item[]
\item[$\bullet$]It follows from part (b) of  Corollary~\ref{explicoro3.5} and  \cite[Lemma~3.1(a,d)]{Bernik-Kleinbock-Margulis-01:MR1829381} that, when the index $i$ is fixed, the maximum over $s$ of the coordinate functions  given by~\eqref{ez_class_4}, that is, the quantity $\|b\, \mu\, \nabla(\vv f\cdot\vv v_i)\|_{\infty}$,  is  $(C',\alpha)$--good.
\item[]
\item[$\bullet$] It  follows from Proposition~\ref{proposition_explicit_4_1} and \cite[Lemma~3.1(a,d)]{Bernik-Kleinbock-Margulis-01:MR1829381} that, for fixed indices $i$ and $j$, the Euclidean norm over $s$ of the coordinate functions  given by~\eqref{ez_class_5} and~\eqref{ez_class_6}, that is, the quantities $\| \mu\, \tilde{\nabla}(\vv f\cdot\vv v_i,  a+b\vv f\cdot\vv v_0)\|_{2}$ and $\|b\, \mu\, \tilde{\nabla}(\vv f\cdot\vv v_i, \vv f\cdot\vv v_j)\|_{2}$ respectively, are $(C',\alpha)$--good. On using the relation $$\frac{1}{\sqrt{d}}\leq\frac{\|\cdot\|_{\infty}}{\|\cdot\|_2}\leq 1$$ valid in $\R^d$ and~\cite[Lemma~3.1(c)]{Bernik-Kleinbock-Margulis-01:MR1829381}, it follows that $\| \mu\, \tilde{\nabla}(\vv f\cdot\vv v_i,  a+b\vv f\cdot\vv v_0)\|_{\infty}$ and $\|b\, \mu\, \tilde{\nabla}(\vv f\cdot\vv v_i, \vv f\cdot\vv v_j)\|_{\infty}$ are $(d^{\alpha/2}C',\alpha)$--good.
\end{enumerate}

The upshot of the above together with \cite[Lemma~3.1(b)]{Bernik-Kleinbock-Margulis-01:MR1829381} is that the maximum of the coordinate functions \eqref{ez_class_1}--\eqref{ez_class_6} is $(d^{\alpha/2}C',\alpha)$--good. In turn,  on using the relation
$$
\frac{1}{\sqrt{L_d(k)}}\leq\frac{\|\cdot\|_{\infty}}{\|\cdot\|_2}\leq 1
$$
valid in $\R^{L_d(k)}$ and~\cite[Lemma~3.1(c)]{Bernik-Kleinbock-Margulis-01:MR1829381}, we have that
$$ \nu_{*} \left(H(\, . \,  )\Gamma\right)  \  {\rm  \ is \ }  \left(C'(d\cdot L_d(k))^{\alpha/2},\alpha\right)-{\rm good}.
$$
As $k\leq n+1$, the desired statement follows.

\end{proof}

Modulo the proof of Proposition~\ref{proposition_explicit_4_1}, we have completed the task of verifying
condition (i) of Theorem~\ref{theo_BKM}.  The proof of the proposition  is rather lengthy and therefore is  postponed till after we have verified condition (ii) of Theorem~\ref{theo_BKM}.

\subsection{Verifying  condition (ii) of Theorem~\ref{theo_BKM}  modulo Proposition~\ref{proposition_explicit_4_1}}

The following lemma, which although not explicitly stated, is essentially proved in~\cite[\S7]{Bernik-Kleinbock-Margulis-01:MR1829381}, see \cite[Eq(7.5)]{Bernik-Kleinbock-Margulis-01:MR1829381} and onwards.
The key difference is that we make use of Proposition~\ref{proposition_explicit_4_1} in place of \cite[Proposition~4.1]{Bernik-Kleinbock-Margulis-01:MR1829381} and so  are able to give explicit values of $\rho_1$ and $\rho$.

\begin{lemma} \label{tech_lemma_one}
Let $\cU\subset\R^d$ be a ball and  $\vv f=(f_1,\dots,f_n)$ be an $n$--tuple of $C^{l+1}$ functions satisfying  Assumption 1. 
 Let $\rho_1, \rho >0$ be given  by~\eqref{def_rho_one_first} and ~\eqref{deffinalerho_first} respectively
and assume that for any $\vv v \in \Sph^{n-1}$ and $p\in\R$ we have that
\begin{equation} \label{tech_lemma_one_def_rho_one}
\sup_{\vv x\in\cU}|\vv f(\vv{x})\vv\cdot\vv v+p|\ge\rho_1\; \text{ and } \;\sup_{\vv x\in\cU}\left\|\nabla\left(\vv f(\vv{x})\vv\cdot\vv v\right)\right\|_{\infty}\ge\rho_1 .
\end{equation}
Furthermore, let $\Lambda$ be the discrete subgroup given  by~\eqref{def_Lambda},  $\Gamma\in\cL(\Lambda)$ be a primitive subgroup of $\Lambda$ and   $H$ be the map given by~\eqref{def_H}.
Then
\[
\sup_{\vv x\in\cU}\nu_{*}(H(\vv x)\Gamma)\ge\rho .
\]

\end{lemma}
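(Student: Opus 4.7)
The plan is to follow the strategy of \cite[\S7]{Bernik-Kleinbock-Margulis-01:MR1829381}, but to track constants explicitly and to substitute Proposition~\ref{proposition_explicit_4_1} for \cite[Proposition~4.1]{Bernik-Kleinbock-Margulis-01:MR1829381}. The starting point is the explicit description, recalled above, of $\nu_{*}(H(\vv x)\Gamma)$ as the Euclidean norm of a vector $\vv w(\vv x)$ whose coordinates are the combinations \eqref{ez_class_1}--\eqref{ez_class_6} of the scalar parameters $a,b,\mu$ and of orthonormal auxiliary vectors $\vv v_0,\dots,\vv v_{k-1}$, where $k$ denotes the rank of $\Gamma$.

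First I would dispatch the rank--one case. When $k=1$, the primitive generator of $\Gamma$ has the form $(p,\vv0,\vv q)$, and $H(\vv x)$ applied to it has non-trivial coordinates given, up to universal scalars, by $p+\vv f(\vv x)\cdot\vv q$ and the components of $\nabla(\vv f(\vv x)\cdot\vv q)$. Setting $\vv v=\vv q/\|\vv q\|_2\in\Sph^{n-1}$, the two hypotheses in \eqref{tech_lemma_one_def_rho_one} immediately furnish $\sup_{\vv x\in\cU}\nu_{*}(H(\vv x)\Gamma)\ge\rho_1\ge\rho$.

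For the higher rank case $k\ge 2$, I would normalise so that $(a,b,\mu)$ sit on a unit sphere in the appropriate parameter space and then split the analysis in two complementary regimes depending on whether the dominant contribution to $\vv w(\vv x)$ comes from the ``linear'' coordinates \eqref{ez_class_1}--\eqref{ez_class_4} or the ``skew'' coordinates \eqref{ez_class_5}--\eqref{ez_class_6}. In the first regime, hypothesis \eqref{tech_lemma_one_def_rho_one} applied to suitable unit vectors $\vv v$ and constants $p$ yields a contribution of size at least $\rho_1$ in the supremum, while \eqref{eqn:def_N} provides the quantitative control on gradients needed to transfer the inequality. In the second regime, Proposition~\ref{proposition_explicit_4_1}(b) applied to the relevant element $(\vv u_1\cdot\vv f,\ \vv u_2\cdot\vv f+u_0)\in\cG$ produces, in the supremum over $\cU$, a skew--gradient of Euclidean norm at least $\rho_2$, which forces one of the skew coordinates in \eqref{ez_class_5}--\eqref{ez_class_6} to be of that size.

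Finally, the explicit value of $\rho$ in \eqref{deffinalerho_first} arises from a geometric optimisation that combines these two regimes: writing $\vv w(\vv x)$ in the plane spanned by its linear and skew parts, with cross--term controlled by $2M^2$ coming from \eqref{eqn:def_N}, a direct elementary estimate gives
\[
\sup_{\vv x\in\cU}\|\vv w(\vv x)\|_2 \;\ge\; \frac{\rho_1\rho_2}{\sqrt{\rho_1^{\,2}+(\rho_2+2M^2)^2}} \;=\; \rho,
\]
which is the required conclusion. The main obstacle, and the reason the argument is not purely formal, is the careful bookkeeping of the normalisations of $a$, $b$, $\mu$ and of the angular configuration between the linear and the skew contributions; this is precisely the point at which the combination formula defining $\rho$ emerges, and where one has to revisit and make quantitative the more qualitative bounds used in \cite[\S7]{Bernik-Kleinbock-Margulis-01:MR1829381}.
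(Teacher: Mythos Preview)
Your proposal takes essentially the same approach as the paper: the paper does not supply a detailed proof of this lemma but simply records that it is ``essentially proved in \cite[\S7]{Bernik-Kleinbock-Margulis-01:MR1829381}, see \cite[Eq(7.5)]{Bernik-Kleinbock-Margulis-01:MR1829381} and onwards,'' with Proposition~\ref{proposition_explicit_4_1} substituted for \cite[Proposition~4.1]{Bernik-Kleinbock-Margulis-01:MR1829381} to make $\rho_1$ and $\rho$ explicit --- precisely your plan. Your sketch of the split into the rank-one case (handled via \eqref{tech_lemma_one_def_rho_one}) and the higher-rank case (handled via the skew-gradient bound $\rho_2$ from Proposition~\ref{proposition_explicit_4_1}(b), combined with the $\rho_1$ and $M$ bounds to produce \eqref{deffinalerho_first}) correctly captures the mechanics of the argument in \cite[\S7]{Bernik-Kleinbock-Margulis-01:MR1829381}.
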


The following statement immediately verifies condition (ii) of Theorem~\ref{theo_BKM}. It is the above lemma without the  assumptions made in (\ref{tech_lemma_one_def_rho_one}).

\medskip

\begin{corollary} \label{tech_corollary_one}
Let $\cU\subset\R^d$ be a ball and  $\vv f=(f_1,\dots,f_n)$ be an $n$--tuple of $C^{l+1}$ functions satisfying  Assumption 1.
Let $\Lambda$ be the discrete subgroup given  by~\eqref{def_Lambda} and  $\Gamma\in\cL(\Lambda)$ be a primitive subgroup of $\Lambda$.    Furthermore, let  $H$ be the map given by~\eqref{def_H}.   Then
\begin{equation} \label{tech_corollary_one_result}
\sup_{\vv x\in \cU}\nu_{*}(H(\vv x)\Gamma)\ge\rho,
\end{equation}
where $\rho$ is given by~\eqref{deffinalerho_first}. 
\end{corollary}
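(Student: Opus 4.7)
The plan is to derive Corollary~\ref{tech_corollary_one} as an immediate consequence of Lemma~\ref{tech_lemma_one}, by showing that the auxiliary hypothesis \eqref{tech_lemma_one_def_rho_one} of that lemma holds \emph{unconditionally} under Assumption~1, with the explicit $\rho_1$ prescribed in~\eqref{def_rho_one_first}. In other words, the only task is to verify the two inequalities
\[
\sup_{\vv x\in\cU}|\vv f(\vv x)\cdot\vv v+p|\ \ge\ \rho_1,\qquad \sup_{\vv x\in\cU}\|\nabla(\vv f(\vv x)\cdot\vv v)\|_{\infty}\ \ge\ \rho_1
\]
for every $\vv v\in\Sph^{n-1}$ and every $p\in\R$, so that Lemma~\ref{tech_lemma_one} can be applied verbatim.

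The first step is to fix $\vv v\in\Sph^{n-1}$ and invoke the definition of the measure of non--degeneracy~\eqref{def_s0}: there exist $k\in\{1,\dots,l\}$ and $\vv u\in\Sph^{d-1}$ such that
\[
\left|\frac{\partial^k(\vv f\cdot\vv v)}{\partial\vv u^k}(\vv x_0)\right|\ \ge\ s_0,
\]
where $\vv x_0$ is the centre of $\cU$. Restrict attention to the one--variable function $h(t):=\vv f(\vv x_0+t\vv u)\cdot\vv v+p$ defined on $[-r,r]$; the line segment $\{\vv x_0+t\vv u:t\in[-r,r]\}$ lies in $\cU$. Then $|h^{(k)}(0)|\ge s_0$, and $|h^{(k+1)}(t)|\le M$ throughout $[-r,r]$ by~\eqref{eqn:def_N}. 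Crucially, $h^{(k)}(0)$ and $h^{(j)}(0)$ for $j\ge1$ do not depend on $p$; changing $p$ only shifts $h(0)$.

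The bulk of the work is a one--variable divided--difference argument. Choose $k+1$ equally--spaced nodes $t_0<\dots<t_k$ in $[-r,r]$ (say $t_j=-r+2rj/k$). Then the $k$th divided difference satisfies
\[
[t_0,\dots,t_k;h]=\frac{h^{(k)}(\xi)}{k!}
\]
for some $\xi\in[-r,r]$, and by the bound on $h^{(k+1)}$ together with the smallness condition~\eqref{slv2} on $r$ we may force $|h^{(k)}(\xi)-h^{(k)}(0)|\le Mr\le s_0/2$, giving $|h^{(k)}(\xi)|\ge s_0/2$. Writing out the divided difference as a linear combination of the $h(t_j)$ and estimating the Vandermonde denominators yields
\[
\max_{0\le j\le k}|h(t_j)|\ \ge\ \frac{s_0\,r^k}{4\,k^k\,(k+1)!}\,.
\]
Since $k\le l$ and $r\le1$ is ensured by~\eqref{slv2}, bounding the right--hand side from below by $s_0(2r)^l/(4\,l^l(l+1)!)$ yields the first inequality in~\eqref{tech_lemma_one_def_rho_one}; the extra factor $\sqrt{d}$ in~\eqref{def_rho_one_first} will appear in the gradient case.

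For the second inequality, when $k=1$ we have $|\partial(\vv f\cdot\vv v)/\partial\vv u(\vv x_0)|\ge s_0$ directly, and since the directional derivative is bounded by $\|\nabla(\vv f\cdot\vv v)(\vv x_0)\|_2\le\sqrt{d}\,\|\nabla(\vv f\cdot\vv v)(\vv x_0)\|_\infty$, we conclude $\|\nabla(\vv f\cdot\vv v)(\vv x_0)\|_\infty\ge s_0/\sqrt{d}\ge\rho_1$. When $k\ge 2$, apply exactly the same divided--difference argument as above to the one--variable function $\tilde h(t):=\partial(\vv f\cdot\vv v)/\partial\vv u(\vv x_0+t\vv u)$, whose $(k-1)$st derivative at $0$ has absolute value $\ge s_0$, obtaining $\sup_{t\in[-r,r]}|\tilde h(t)|\ge s_0(2r)^{l-1}/(4l^l(l+1)!)$, and then divide by $\sqrt{d}$ to pass from a directional derivative to the $\ell^\infty$ gradient norm. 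The factor $2r$ in~\eqref{def_rho_one_first} absorbs the worst case between $r^l$ and $r^{l-1}$. The main obstacle is keeping track of the explicit constants in the divided--difference bound and ensuring they fit into the precise value of $\rho_1$ prescribed by~\eqref{def_rho_one_first}; once both inequalities are established, the corollary follows immediately from Lemma~\ref{tech_lemma_one}.
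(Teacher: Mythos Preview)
Your overall strategy coincides with the paper's: reduce the corollary to Lemma~\ref{tech_lemma_one} by verifying both inequalities in~\eqref{tech_lemma_one_def_rho_one}, treating the gradient estimate via the case split $k=1$ versus $k\ge2$. The only difference is cosmetic: the paper first extends the bound $|\partial^k(\vv f\cdot\vv v)/\partial\vv u^k|\ge s_0/2$ from the centre $\vv x_0$ to all of $\cU$ and then invokes~\cite[Lemma~3.6]{Bernik-Kleinbock-Margulis-01:MR1829381} as a black box, whereas you carry out an equivalent divided--difference argument inline.

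There is, however, a genuine slip in your constant bookkeeping. You assert that
\[
\frac{s_0\,r^k}{4\,k^k\,(k+1)!}\ \ge\ \frac{s_0\,(2r)^l}{4\,l^l\,(l+1)!}
\]
once $k\le l$ and $r\le1$, but for $k=l$ this becomes $1\ge 2^l$, which is false. The repair is easy. With your equally spaced nodes $t_j=-r+2rj/k$ one has $\sum_j\prod_{i\ne j}|t_j-t_i|^{-1}=k^k/(r^kk!)$, so the divided--difference argument actually yields the sharper bound $\max_j|h(t_j)|\ge s_0 r^k/(2k^k)$, and since $(k+1)!\ge 2^{k-1}$ this dominates $s_0(2r)^k/(4k^k(k+1)!)$. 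Alternatively, and this is what the paper does, one notes that the one--variable interval has length $2r$, so that~\cite[Lemma~3.6]{Bernik-Kleinbock-Margulis-01:MR1829381} delivers a factor $(2r)^k$ rather than $r^k$; combined with $2r\le1$ (an easy consequence of~\eqref{slv2}, using $s_0\le M$ and $\sigma(l,d)\le1$) one then gets $(2r)^k/(k^k(k+1)!)\ge(2r)^l/(l^l(l+1)!)$ and the comparison with $\rho_1\sqrt{d}$ goes through.
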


\medskip

\begin{proof}[Proof of Corollary~\ref{tech_corollary_one}]
The desired statement  follows directly  from Lemma  \ref{tech_lemma_one} on ve\-ri\-fy\-ing  the inequalities associated with  (\ref{tech_lemma_one_def_rho_one}).
Let $\vv v \in \Sph^{n-1}$. By the definition of $s_0:=s(l;\vv f,\tcUp)$, there exists  a $\vv u \in \Sph^{d-1}$ and $1\le k\le l$ such that
\begin{equation} \label{tech_corollary_one_lb_def_s}
\left|\frac{\partial^k(\vv f\cdot\vv v)}{\partial\vv u^k}(\vv x_0)\right|\ge s_0.
\end{equation}
Recall, that $\vv x_0$  is the centre of $\cU$. It follows that for any  $\vv x\in\cU$, we have that~:
\begin{align}
\label{minvgksx}
\left|\bm{v}\cdotp\frac{\partial^k \bm{f}}{\partial\bm{u}^k}(\bm{x}) \right|
& = \, \left|\bm{v}\cdotp\frac{\partial^k \bm{f}}{\partial\bm{u}^k}(\vv x_0) \right| - \left|\bm{v}\cdotp\left(\frac{\partial^k \bm{f}}{\partial\bm{u}^k}(\vv x_0) - \frac{\partial^k \bm{f}}{\partial\bm{u}^k}(\bm{x})\right)\right| \nonumber\\[2ex]
&\ge \, s_0 - \left\| \frac{\partial^k \bm{f}}{\partial\bm{u}^k}(\vv x_0) - \frac{\partial^k \bm{f}}{\partial\bm{u}^k}(\bm{x})\right\|_{2}.
\end{align}
Let $\bm{s}'$ denote  the unit vector $$\bm{s}':= \frac{\bm{x} - \vv x_0}{\left\|\bm{x} - \vv x_0\right\|_2}\cdotp $$
By Lagrange's Theorem, there exists $\bm{x}'$ between $\vv x_0$ and $\bm{x}$ such that
\[
\frac{\partial^k \bm{f}}{\partial\bm{u}^k}(\vv x_0) \, = \, \frac{\partial^k \bm{f}}{\partial\bm{u}^k}(\bm{x}) + \left\|\bm{x}- \vv x_0\right\|_2\frac{\partial}{\partial \bm{s}'}\left(\frac{\partial^k \bm{f}}{\partial\bm{u}^k}\right)(\bm{x}').
\]
It then follows from~(\ref{minvgksx}) and the definition of $M$ in~\eqref{eqn:def_N} that
\[
\left|\bm{v}\cdotp\frac{\partial^k \bm{f}}{\partial\bm{u}^k}(\bm{x}) \right|\, \ge \, s_0 - M\left\|\bm{x}- \vv x_0\right\|_{2} \, .
\]
This together with the fact that $r<s_0/2M$  ---  a direct consequence of (\ref{slv2}) ---, implies that
\begin{equation} \label{tech_corollary_one_lb_one}
\left|\frac{\partial^k(\vv f\cdot\vv v)}{\partial\vv u^k}(\vv x)\right|\ge s_0/2   \qquad \forall \  \vv x\in\cU  \, .
\end{equation}

\noindent  The upshot is that the hypotheses of~\cite[Lemma~3.6]{Bernik-Kleinbock-Margulis-01:MR1829381} are satisfied.  A straightforward  application  of~\cite[Lemma~3.6]{Bernik-Kleinbock-Margulis-01:MR1829381} together with~\eqref{def_rho_one_first} implies that
\[
\sup_{x\in\cU}\left|\vv f(\vv x)\cdot\vv v+p\right|\ge\frac{s_0/2}{2k^k(k+1)!}(2r)^k\ge\frac{s_0}{4l^l(l+1)!}(2r)^l=\rho_1\sqrt{d}\geq\rho_1
\]
for any $p\in\R$.
Thus the first inequality appearing in  ~\eqref{tech_lemma_one_def_rho_one} is established.

It remains to prove the second inequality in~\eqref{tech_lemma_one_def_rho_one}; that is, that for any $\vv v \in \Sph^{n-1}$,
\begin{equation} \label{tech_corollary_one_aim_two}
\sup_{\vv x\in\cU}\left\|\nabla\left(\vv f(\vv x)\vv \cdot\vv v\right)\right\|_{\infty}\ge\rho_1=\frac{s_0}{4l^l(l+1)!\sqrt{d}}(2r)^l.
\end{equation}
Recall from above that  for any  $\vv v \in \Sph^{n-1}$ we can find a vector  $\vv u \in \Sph^{d-1}$ such that~\eqref{tech_corollary_one_lb_def_s} and~\eqref{tech_corollary_one_lb_one} hold. Furthermore, observe that
\begin{equation} \label{tech_corollary_one_equality_one}
\frac{\partial(\vv f\cdot\vv v)}{\partial\vv u}(\vv x)=\vv u^t\cdot\nabla\left(\vv f(\vv x)\cdot\vv v\right).
\end{equation}

\noindent We proceed by considering two cases, depending on whether or not $k=1$ in ~\eqref{tech_corollary_one_lb_def_s}.

\begin{enumerate}
\item[$\bullet$] Suppose $k=1$ in~\eqref{tech_corollary_one_lb_def_s}. Then it follows from  ~\eqref{tech_corollary_one_lb_one} and~\eqref{tech_corollary_one_equality_one}  that
\[
\left|\vv u^t\cdot\nabla\left(\vv f(\vv x)\cdot\vv v\right)\right|\ge \frac{s_0}{2}  \qquad  \forall  \ \vv x\in\cU  \, .
\]
On applying the  Cauchy--Schwartz inequality,  we obtain that
\[
\left\|\nabla\left(\vv f(\vv x)\cdot\vv v\right)\right\|_{2}\ge \frac{s_0}{2}  \qquad  \forall  \ \vv x\in\cU  \, .
\]
This together with the fact that $\left\|\, . \,\right\|_{2}\le \sqrt{d}\left\|\, . \,\right\|_{\infty}$  implies the  second inequality appearing in  ~\eqref{tech_lemma_one_def_rho_one}.

\item[]

\item[$\bullet$] 
Suppose $k\ge 2$ in~\eqref{tech_corollary_one_lb_def_s}. Consider the function
$g(\vv x):=\frac{\partial(\vv f\cdot\vv v)}{\partial\vv u}(\vv x)$ defined on $\cU$. Then by~\eqref{tech_corollary_one_lb_one}, we have that
\[
\left|\frac{\partial^{k-1}g}{\partial\vv u^{k-1}}(\vv x)\right|\ge \frac{s_0}{2} \qquad  \forall  \ \vv x\in\cU  \, .
\]
 Thus,  the hypotheses of~\cite[Lemma~3.6]{Bernik-Kleinbock-Margulis-01:MR1829381} are satisfied for the function $g(\vv x )$ and  a straightforward  application  of that lemma together with~\eqref{def_rho_one_first} implies that
\begin{equation} \label{tech_corollary_one_ie_two}
\sup_{\vv x\in\cU}|g(\vv x)|\ge\frac{s_0}{4(l-1)^{l-1}l!}(2r)^{l-1}>\rho_1\sqrt{d}.
\end{equation}
Now  the  Cauchy--Schwartz inequality and~\eqref{tech_corollary_one_equality_one} imply that
$$
\left\|\nabla\left(\vv f(\vv x)\cdot\vv v\right)\right\|_{2}\ge\left|\vv u^t\cdot\nabla\left(\vv f(\vv x)\cdot\vv v\right)\right| =  |g(\vv x )|   \qquad  \forall  \ \vv x\in\cU  \,    .
$$
This together with ~\eqref{tech_corollary_one_ie_two} and the fact that $\left\|\, . \,\right\|_{2}\le \sqrt{d}\left\|\, . \,\right\|_{\infty}$ imply the desired statement; namely that
$$
\sup_{\vv x\in\cU}\left\|\nabla\left(\vv f(\vv x)\cdot\vv v\right)\right\|_{\infty}\ge \rho_1 \ .
$$
\end{enumerate}
\end{proof}

The upshot of \S\ref{cond1-3} is that we have verified conditions (i) \& (ii) of Theorem~\ref{theo_BKM}  as desired  modulo  Proposition~\ref{proposition_explicit_4_1}.

\section{Proof of Proposition~\ref{proposition_explicit_4_1} \label{xmas2}}

In order to prove Proposition~\ref{proposition_explicit_4_1}, we first establish  an explicit version of~\cite[Lemma 4.3]{Bernik-Kleinbock-Margulis-01:MR1829381}. Throughout this section, the notation introduced in~\eqref{def_dist} will be used.


\medskip

\begin{lemma}
\label{explilem4.3}
Let $B\subset \R^d$ be a ball of radius 1 and let $B_{\infty}$ denote the hypercube circumscribed around $B$ with edges parallel to the coordinate axes. Assume further that $\bm{p}=(p_1, p_2)~: B \mapsto \R^2$ is a polynomial map of degree at most $l\ge 1$ such that
\begin{equation}\label{diamimage}
\underset{\bm{x}, \bm{y} \in B_{\infty}}{\sup} \left\|\bm{p}(\bm{x}) - \bm{p}(\bm{y}) \right\|_2 \, \le \, 2
\end{equation}
and
\begin{equation}\label{distdroite}
\underset{\bm{x}\in B}{\sup}\; \textrm{\emph{dist}} \left(\mathcal{L}, \bm{p}(\bm{x}) \right) \, \ge \, \frac{1}{8}
\end{equation}
for any straight line $\mathcal{L}\subset \R^2$.
Then,
\begin{equation} \label{a}
\underset{\bm{x}\in B}{\sup} \|\widetilde{\nabla}\bm{p}(\bm{x}) \|_2 \, \ge \, \frac{1}{86\,016\,\sqrt{10}}\left(1+\underset{\bm{x}\in B}{\sup} \left\|\bm{p}(\bm{x})\right\|_2 \right)
\end{equation}
and
\begin{equation}
	\label{b}
\underset{\bm{x}\in B, \, i=1,2}{\sup} \left\|\nabla p_i(\bm{x}) \right\|_2 \, \le \, 2l^2\sqrt{d}.
\end{equation}
\end{lemma}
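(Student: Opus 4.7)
The plan is to handle part~(b) by a direct application of Markov's inequality, and part~(a) by following the strategy of~\cite[Lemma~4.3]{Bernik-Kleinbock-Margulis-01:MR1829381} with all constants tracked explicitly.

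For part~(b), after a translation one may assume $B=B(\vv 0,1)$, so $B_\infty=[-1,1]^d$. Hypothesis~\eqref{diamimage} gives $\mathrm{osc}_{B_\infty}(p_i)\le 2$ for $i=1,2$; subtracting the midpoint of its range produces a polynomial $\widetilde p_i$ of degree at most $l$ with $|\widetilde p_i|\le 1$ on $B_\infty$. The classical Markov inequality $|q'|\le l^2\|q\|_\infty$ applied one variable at a time yields $|\partial_j p_i|=|\partial_j\widetilde p_i|\le l^2$ on $B_\infty$, so $\|\nabla p_i\|_2\le l^2\sqrt d\le 2l^2\sqrt d$, which is the conclusion of~\eqref{b}.

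For part~(a), set $R:=\sup_B\|\bm p\|_2$ and write $\bm p=\rho(\cos\theta,\sin\theta)$ wherever $\rho>0$; the identity $\widetilde{\nabla}\bm p=\rho^2\nabla\theta$ from~\eqref{latest} underpins both regimes of the argument. When $R$ is large (say $R\ge 4$), the diameter bound~\eqref{diamimage} forces $\rho\ge R-2>0$ throughout $B$, so $\theta$ is well defined and differentiable, and $\|\nabla\theta\|_2\le\sup_B\|\widetilde{\nabla}\bm p\|_2/(R-2)^2$. Integrating $\nabla\theta$ along a chord of $B$ (of length at most $2$) controls the total angular oscillation of $\theta$, placing $\bm p(B)$ inside a thin sector centred at the origin. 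The perpendicular distance of $\bm p(B)$ to the bisecting line of this sector is at most $R$ times half the angular oscillation, so hypothesis~\eqref{distdroite} applied to this bisector forbids this oscillation from dropping below $1/(4R)$. Backtracking yields $\sup_B\|\widetilde{\nabla}\bm p\|_2\ge(R-2)^2/(8R)$, which dominates $c(1+R)$ for $R\ge 4$ and any sufficiently small~$c$.

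In the complementary \emph{bounded-radius regime} ($R$ below the threshold), only a uniform lower bound on $\sup_B\|\widetilde{\nabla}\bm p\|_2$ is required. Using~\eqref{distdroite}, one produces three points $x_1,x_2,x_3\in B$ such that $\bm p(x_1),\bm p(x_2),\bm p(x_3)$ form a triangle of area at least $1/128$: take $x_1,x_2$ realising the diameter $D\ge 1/8$ of $\bm p(B)$ and $x_3$ at distance $\ge 1/8$ from the line through $\bm p(x_1)$ and $\bm p(x_2)$, giving area $\ge D/16\ge 1/128$. Letting $T\subset B$ be the $2$-simplex spanned by these points, Stokes' theorem applied to $\omega=p_1\,dp_2-p_2\,dp_1$ (whose exterior derivative is $2\,dp_1\wedge dp_2$) yields
\[
\tfrac12\Bigl|\oint_{\partial T}\widetilde{\nabla}\bm p\cdot d\ell\Bigr|=\Bigl|\int_T dp_1\wedge dp_2\Bigr|,
\]
with the left-hand side bounded by $3\sup_B\|\widetilde{\nabla}\bm p\|_2$ (since $\mathrm{perim}(T)\le 6$) and the right-hand side equal to the signed area of the polynomial image $\bm p(T)$. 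Markov-type ``bulge'' estimates on each polynomial edge $\bm p([x_i,x_j])$ relative to its chord $\bm p(x_i)\bm p(x_j)$ show that, possibly after refining $T$ to a suitable sub-simplex, this signed area remains a positive fraction of the straight-line triangle area, delivering the required uniform lower bound on $\sup_B\|\widetilde{\nabla}\bm p\|_2$ in this regime. The \emph{main obstacle} is quantifying these bulge corrections precisely and carrying every numerical loss --- from the triangle inequality, Markov's inequality, the Stokes estimate, and the sector-to-line approximation --- through both regimes tightly enough for the resulting constant to match the value $\frac{1}{86\,016\sqrt{10}}$ stated in~\eqref{a}.
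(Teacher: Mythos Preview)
Your treatment of~\eqref{b} matches the paper's: both reduce to the one-variable Markov inequality along coordinate directions. Your large-$R$ regime for~\eqref{a} is also in the same spirit as the argument the paper invokes from~\cite{Bernik-Kleinbock-Margulis-01:MR1829381} (angular confinement via the diameter bound, then~\eqref{distdroite} applied to the bisector).

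The bounded-$R$ regime is where you diverge from the paper, and here there is a genuine gap. Your Stokes identity gives
\[
\frac12\Bigl|\oint_{\partial T}\widetilde{\nabla}\bm p\cdot d\ell\Bigr|=\Bigl|\int_T dp_1\wedge dp_2\Bigr|,
\]
but the right-hand side is a \emph{signed} (degree-weighted) area of the polynomial image of $T$, not the area of the triangle $\bm p(x_1)\bm p(x_2)\bm p(x_3)$. Nothing you have assumed prevents $\bm p$ from folding $T$ over itself so that this integral is much smaller than $1/128$, or even zero. Your proposed fix --- ``Markov-type bulge estimates'' plus ``refining $T$ to a suitable sub-simplex'' --- is not an argument: you would need to show that on some sub-simplex the Jacobian $\partial_1 p_1\,\partial_2 p_2-\partial_2 p_1\,\partial_1 p_2$ has a definite sign and definite size, which is essentially as hard as the original problem. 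You correctly flag this as the main obstacle, but you have not overcome it.

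The paper sidesteps this entirely with a two-point argument using only the mean value theorem. After finding $\bm x_1$ with $\|\bm p(\bm x_1)\|_2\ge 1/8$ (from~\eqref{distdroite} with any line through the origin), it sets the polar axis through $\bm p(\bm x_1)$, finds $\bm x_2$ with $\mathrm{dist}(\bm p(\bm x_2),\mathcal L_1)\ge 1/8$, and computes explicitly $|\theta(\bm p(\bm x_2))|\ge 1/48$ and $\mathrm{dist}(O,\Delta)\ge 1/(128\sqrt{10})$, where $\Delta$ is the line through $\bm p(\bm x_1)$ and $\bm p(\bm x_2)$. Lagrange's theorem applied to $\theta$ along the segment $[\bm x_1,\bm x_2]\subset B$ then produces $\bm y$ with $|\partial_{\bm u}\theta(\bm y)|=|\theta(\bm p(\bm x_2))|/|J|$, and one reads off $\|\widetilde{\nabla}\bm p(\bm y)\|_2\ge \rho(\bm y)^2|\partial_{\bm u}\theta(\bm y)|\ge 1/(12\,288\sqrt{10})$ directly. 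No area comparison, no third point, no Stokes --- and the constant $1/(86\,016\sqrt{10})$ drops out immediately since $1+R\le 7$ in this regime.
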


\begin{proof}
Regarding~\eqref{a}, if we assume that ${\sup}_{\bm{x} \in B} \left\|\bm{p}(\bm{x})\right\|_2 > 6$,   the argument used to prove  ~\cite[Lemma 4.3]{Bernik-Kleinbock-Margulis-01:MR1829381}  gives the stronger inequality in which  the constant factor   $1/(86\,016\sqrt{10})$ is replaced by $1/64$. Thus, without loss of generality, assume that
\begin{equation} \label{dec1}
{\sup}_{\bm{x} \in B} \left\|\bm{p}(\bm{x})\right\|_2 \le 6 \, . \end{equation}   It is easily inferred from~(\ref{distdroite}) that there exists  $\bm{x}_1\in \overline{B}$, the closure of $B$, such that $\left\|\bm{p}\left(\bm{x}_1\right)\right\|_2 \ge 1/8$.
Working in polar coordinates and choosing the straight line $\mathcal{L}_1$ joining the origin to $\bm{p}\left(\bm{x}_1\right)$ to be  the polar axis, let $\left(\rho\!\left(\bm{x}\right), \theta\!\left(\bm{x}\right)\right)$ denote the polar coordinates of a vector $\bm{x}\in\R^2$. Thus, $\rho(\bm{p}(\bm{x}_1))\ge 1/8$.
Furthermore, from~(\ref{distdroite}), there exists $\bm{x}_2\in\overline{B}$ such that $\mbox{dist}\left(\mathcal{L}_1, \bm{p}(\bm{x}_2)\right)\ge 1/8$ 
 and therefore, together with~(\ref{dec1}), we have that
\begin{align}
\label{dec2}
\left|\theta\left(\bm{p}\left(\bm{x}_2\right)\right) \right| \, & \ge \, \left|\sin \theta\left( \bm{p}\left(\bm{x}_2\right)\right) \right| \, = \, \frac{\mbox{dist}\left(\mathcal{L}_1, \bm{p}(\bm{x}_2)\right)}{\rho\left(\bm{p}(\bm{x}_2)\right)}  \nonumber \\
& \ge \, \frac{1/8}{6} \, =\, \frac{1}{48}\cdotp
\end{align}

\noindent Now let $\Delta$ be the straight line joining $\bm{p}(\bm{x}_1)$ and $\bm{p}(\bm{x}_2)$. Furthermore, let  $\mathcal{L}_2$ denote the $x$-coordinate axis, $(x_1, y_1)$ the Cartesian coordinates of $\bm{p}(\bm{x}_2)$ and $(\rho(\bm{p}(\bm{x}_1)),0)$ the Cartesian coordinates of $\bm{p}(\bm{x}_1)$. Then the Cartesian equation of $\Delta$ is
\[
\Delta~: \, y_1 x -(x_1-\rho(\bm{p}(\bm{x}_1)))y - \rho(\bm{p}(\bm{x}_1))y_1 =0.
\]
It follows from the choice of the points $\bm{x}_1$ and $\bm{x}_2$ together with~(\ref{diamimage}), (\ref{distdroite}) and (\ref{dec1}) that  $$\frac{1}{8}\le \left|y_1\right| \le 6,  \   \; \rho(\bm{p}(\bm{x}_1))\ge \frac{1}{8}    \ \; \mbox{ and }   \  \; \left|x_1 - \rho(\bm{p}(\bm{x}_1))\right|\le 2.$$ Therefore, the distance from the origin $O$ to $\Delta$ satisfies the inequality
\begin{equation} \label{dec3}
\mbox{dist}\left(\Delta, O \right) \, = \, \frac{\left|(\rho(\bm{p}(\bm{x}_1)) y_1\right|}{\sqrt{y_1^2+\left(x_1 - \rho(\bm{p}(\bm{x}_1) \right)^2}}\, \ge \, \frac{(1/8)^2}{\sqrt{6^2+2^2}}\, = \, \frac{1}{128\sqrt{10}}\cdotp \end{equation}

\medskip

Let $J$ denote the straight line segment $[\bm{x}_1, \bm{x}_2]$ and let $\bm{u} $ be the unit vector $$\bm{u}:= \frac{\bm{x}_2 - \bm{x}_1}{\left\|\bm{x}_2 - \bm{x}_1\right\|_2}  \cdotp $$  Restricting $\bm{p}$ to $J$, Lagrange's Theorem guarantees the existence of $\bm{y}\in \left(\bm{x}_1, \bm{x}_2\right)$ such that $$\theta\left(\bm{p}(\bm{x}_2)\right) = \frac{\partial \theta}{\partial \bm{u}}\left(\bm{y}\right)\left| J \right|.$$
It then follows via~(\ref{latest}), (\ref{dec2}) and~(\ref{dec3})  that

\begin{align*}
\|\tilde{\nabla}\bm{p}(\bm{y})\|_2 \, &\ge \, |\bm{u}\cdot\tilde{\nabla}\bm{p}(\bm{y}) |\, = \, \rho^2\left(\bm{y}\right)\left|\frac{\partial \theta}{\partial \bm{u}}\left(\bm{y}\right)\right|\\[3ex]
&\ge \, \mbox{dist}\left(\Delta, O\right) \frac{\left| \theta(\bm{p}(\bm{x}_2))\right|}{\left|J\right|}\, =\, \mbox{dist}\left(\Delta, O\right) \frac{\left| \theta(\bm{p}(\bm{x}_2))\right|}{\left\|\bm{x}_2 -\bm{x}_1\right\|_2}\\[3ex]
&\ge \, \frac{1}{128\sqrt{10}\times 48 \times 2}\, =\, \frac{1}{12\,288\,\sqrt{10}}\cdotp
\end{align*}
Thus,
$$\underset{\bm{x}\in B}{\sup} \|\tilde{\nabla}\bm{p}(\bm{x}) \|_2 \, \ge \, \frac{1}{12\,288\,\sqrt{10}} = \frac{7}{86\,016\, \sqrt{10}} \, \ge \, \frac{1}{86\,016\, \sqrt{10}}\left(1+ \underset{\bm{x}\in B}{\sup}\left\|\bm{p}(\bm{x})\right\|_2\right).$$

\noindent This completes the proof of  (\ref{a}).  We now turn out attention to ~\eqref{b}.

Let $i\in\left\{1,2\right\}$. It may be assumed without loss of generality that $p_i\left(0, \dots, 0\right)=0$ and that the ball $B$ is centered at the origin.  Then, for given  $x_2, \dots, x_d$ in $\R$, consider the polynomial in one variable $p\left(x\right):= p_i\left(x, x_2, \dots, x_d \right)$, which is of degree at most $l$.  It follows from~(\ref{diamimage}) that
\[
\underset{\left|x\right|\le 1}{\sup} \left|p\left(x\right) \right|\,\le\, 2.
\]
Hence by  Markov's inequality for polynomials, we have that
\[
\underset{\left|x\right|\le 1}{\sup} \left|\frac{\textrm{d} p}{\textrm{d} x}\left(x\right) \right| \, = \, \underset{\left|x_1\right|\le 1}{\sup} \left|\frac{\partial p_i}{\partial x_1}\left(x_1, x_2, \dots, x_d\right)\right|\,\le\, 2l^2  \, .
\]
 This together with the fact that $\left\|\, . \,\right\|_{2}\le \sqrt{d}\left\|\, . \,\right\|_{\infty}$  implies that
\[
\max \left\{ \underset{\bm{x}\in B}{\sup} \left\|\nabla p_1(\bm{x}) \right\|_2 , \underset{\bm{x}\in B}{\sup} \left\|\nabla p_2(\bm{x}) \right\|_2 \right\} \, \le \, 2l^2\sqrt{d}
\]
and therefore completes the proof of the lemma.
\end{proof}

\bigskip

We now have all the ingredients in place  to prove Proposition~\ref{proposition_explicit_4_1}.

\medskip

\begin{proof}[Proof of Proposition~\ref{proposition_explicit_4_1}.]  The proposition is an explicit version of  \cite[Proposition 4.1]{Bernik-Kleinbock-Margulis-01:MR1829381}.  Within our setup in which $\mathcal{G}$ is given by \eqref{defg}, the starting point for the proof of part~(a) of \cite[Proposition 4.1]{Bernik-Kleinbock-Margulis-01:MR1829381} corresponds to the existence of positive constants $\delta$, $c$, and $\alpha$ with

\begin{equation} \label{tree}
0<\delta<1/8  \qquad {\rm and }  \qquad 2C_{d,l}N_d\delta^{1/(d(2l-1)(2l-2))}\le 1
\end{equation}

\noindent such that for every  $ \vv g \in\mathcal{G} $  one has

\begin{equation} \label{eq_4_5a}
\forall \, \vv v\in\Sph^{1} \quad \exists \, \vv u\in\Sph^{d-1}\quad \exists \, k\leq l  \; \; :\;\; \inf_{\vv x\in\tcU}\left|\vv v\cdot\frac{\partial^k \vv g}{\partial \vv u^k}(\vv x)\right|\geq c
\end{equation}
and
\begin{equation} \label{eq_4_5b}
\sup_{\vv x, \vv y\in\tcU}\|\partial_{\beta}\vv g(\vv x)-\partial_{\beta}\vv g(\vv y)\|\leq\frac{\delta c \alpha}{8 \xi l^l (l+1)!}\,=\,  \frac{\delta c \alpha}{16 l^{l+2} (l+1)!\sqrt{d}}
\end{equation}
for all multi--indices $\beta$  with  $|\beta|=l$. Here, $\xi = 2l^2\sqrt{d}$ is the quantity in right--hand side of~\eqref{b} and the real number $\alpha$ is required to be less than the constant appearing in the right--hand side of~\eqref{a}, that is,
\begin{equation} \label{eq_4_5c}
\alpha\le\frac{1}{86\,016\sqrt{10}}\cdotp
\end{equation}

The statements~\eqref{eq_4_5a} and~\eqref{eq_4_5b} correspond exactly to~\cite[Eq(4.5a) \& Eq(4.5b)]{Bernik-Kleinbock-Margulis-01:MR1829381}) with $V$ replaced by $\tcU$.

The proof of part~(a) of Proposition~\ref{proposition_explicit_4_1} follows from the existence of the constants $\delta$, $c$, and $\alpha$ as established in the proof of part (a) of~\cite[Proposition~4.1]{Bernik-Kleinbock-Margulis-01:MR1829381}.  It remains for us to show that, given the definition of $r$ in~\eqref{slv2}, it is indeed possible to choose such constants in such a way that the relations~\eqref{tree}--\eqref{eq_4_5c}  hold.

With this in mind, set $$\delta:=\eta,$$  where $\eta$ is defined by~\eqref{def_r_V}. It follows from the definition of $\eta$ and the  well known bound  $N_d\le 5^d$ for the Besicovitch constant (cf.~Remark~\ref{besicocst} p.\pageref{besicocst})
that \eqref{tree} is satisfied with $\delta=\eta$.  We proceed with verifying~\eqref{eq_4_5a} and~\eqref{eq_4_5b}.

Regarding~\eqref{eq_4_5a}, let $\bm{g}=\left(\bm{u}_1 \cdot \vv f, \bm{u}_2  \cdot \vv f+u_0\right)\in\mathcal{G}$.  Also, let   $\bm{u}:=\left(\bm{u}_1, \bm{u}_2\right)$ with $\bm{u}_1,\bm{u}_2\in\Sph^{n-1}$  and  let $\bm{v}:=\left(v_1, v_2\right)\in\Sph^{1}$.  Furthermore, let $\bm{w}$ denote the vector $\bm{w}:=v_1\bm{u}_1+v_2\bm{u}_2$.  Since by the definition of $\mathcal{G}$, the vectors  $\bm{u}_1$ and  $\bm{u}_2$ are orthogonal, it  follows that  $\bm{w}\in\Sph^{n-1}$.  Now observe that
for any multi--index $\vv\beta$ such that $\left|\vv\beta\right|\le l$, $$\bm{v}\cdot\partial_{\vv\beta}\bm{g} = \bm{w}\cdot\partial_{\vv\beta}\bm{f}.$$
By the definition of $s_0=s(l; \bm f, \tcUp)$, there exists $\bm{s}\in\Sph^{d-1}$ and $1 \le k\le l$ such that
\[
\left|\bm{v}\cdotp\frac{\partial^k \bm{g}}{\partial\bm{s}^k}(\vv x_0) \right|\, = \, \left|\bm{w}\cdotp\frac{\partial^k \bm{f}}{\partial\bm{s}^k}(\vv x_0) \right|\, \ge \, s_0 \, .
\]
As per usual, $\vv x_0$  denotes here the centre of $\cU$. It follows that for any  $\bm{x}\in\tcUp$, we have that
\begin{align}
\left|\bm{v}\cdotp\frac{\partial^k \bm{g}}{\partial\bm{s}^k}(\bm{x}) \right|
&\ge \, \left|\bm{w}\cdotp\frac{\partial^k \bm{f}}{\partial\bm{s}^k}(\vv x_0) \right| - \left|\bm{w}\cdotp\left(\frac{\partial^k \bm{f}}{\partial\bm{s}^k}(\vv x_0) - \frac{\partial^k \bm{f}}{\partial\bm{s}^k}(\bm{x})\right)\right| \nonumber\\[2ex]
&\ge \, s_0 - \left\| \frac{\partial^k \bm{f}}{\partial\bm{s}^k}(\vv x_0) - \frac{\partial^k \bm{f}}{\partial\bm{s}^k}(\bm{x})\right\|_{2}.
\end{align}
The same arguments as those used to prove~\eqref{tech_corollary_one_lb_one} can be employed to  show that
\[
\left|\bm{v}\cdot\frac{\partial^k\bm{g}}{\partial\bm{s}^k}\left(\bm{x}\right) \right|\, \ge \, \frac{s_0}{2}  \qquad \forall \  \vv x\in\tcUp  \, .
\]
This  proves~\eqref{eq_4_5a} with
$$
c:= \frac{s_0}{2}\cdotp
$$

\noindent We now turn our attention to~\eqref{eq_4_5b}. With $ \vv g $ and $\vv u$ as above, first note that for any $\bm{x}\in\tcUp$ and for any multi--index $\vv\beta$ such that $|\beta|=l$, we have that
\begin{align} \label{ez_first}
\!\!\!\! \|\partial_{\vv\beta}\bm{g}\left(\bm{x}\right)  & -  \partial_{\vv\beta}\bm{g}\left(\vv x_0\right)\|_2 =  \left\|\left(\bm{u}_1\cdot\left(\partial_{\vv\beta}\vv f(\bm{x})-\partial_{\vv\beta}\vv f(\vv x_0)\right),\,\bm{u}_2\cdot\left(\partial_{\vv\beta}\vv f(\bm{x})-\partial_{\vv\beta}\vv f(\vv x_0)\right)\right)\right\|_2  \, .
\end{align}
Next, note that from  the Cauchy--Schwarz inequality, we have that, for $i=1,2$,
\begin{equation} \label{ez_Cauchy--Schwartz}
\left(\bm{u}_i\cdot\left(\partial_{\vv\beta}\vv f(\bm{x})-\partial_{\vv\beta}\vv f(\vv x_0)\right)\right)^2 \le \|\partial_{\vv\beta}\vv f(\bm{x})-\partial_{\vv\beta}\vv f(\vv x_0)\|_2^2.
\end{equation}
On combining  ~\eqref{ez_first} and ~\eqref{ez_Cauchy--Schwartz}, we find that
$$
\left\|\partial_{\vv\beta}\bm{g}\left(\bm{x}\right)-\partial_{\vv\beta}\bm{g}\left(\vv x_0\right)\right\|_2\leq \sqrt{2}\|\partial_{\vv\beta}\vv f(\bm{x})-\partial_{\vv\beta}\vv f(\vv x_0)\|_2.
$$
Now, since $\vv f$ satisfies Assumption 1, in view of\eqref{eqn:def_N}, we obtain that
$$
\|\partial_{\vv\beta}\vv f(\bm{x})-\partial_{\vv\beta}\vv f(\vv x_0)\|_2\leq M\sqrt{2} \left\|\bm{x}-\vv x_0\right\|_2 \, .
$$
Hence, for any $\bm{x}, \bm{y}\in\tcUp$ we have that
\begin{equation*} \label{ez_g}
\left\|\partial_{\vv\beta}\bm{g}\left(\bm{x}\right)-\partial_{\vv\beta}\bm{g}\left(\bm{y}\right)\right\|_2 \,\le \, 2M \left(\left\|\bm{x}-\vv x_0\right\|_2+\left\|\bm{y}-\vv x_0\right\|_2\right).
\end{equation*}
In view of~\eqref{slv2}, we also have that
\begin{eqnarray*}\label{ez_x}
\left\|\bm{x}-\vv x_0\right\|_2+\left\|\bm{y}-\vv x_0\right\|_2&\leq & 2\cdot 3^{n+d+2}\cdot r \nonumber \\[2ex] &\leq & 2\cdot 3^{n+d+2}\cdot\frac{\eta s_0}{4\cdot10^7 3^{n+d+2}\, d M l^{l+2}(l+1)!}  \nonumber \\[2ex]
&=  &   \frac{\eta s_0}{2\cdot10^7\, d M l^{l+2}(l+1)!}\cdotp
\end{eqnarray*}
The upshot is that
\begin{equation} \label{ez_f}
\underset{\bm{x},\bm{y}\in\tcUp}{\sup} \left\|\partial_{\vv\beta}\bm{g}\left(\bm{x}\right)-\partial_{\vv\beta}\bm{g}\left(\bm{y}\right)\right\|_2\, \le \frac{\eta s_0}{10^7\, d  l^{l+2}(l+1)!}
\end{equation}
for any  multi--index $\vv\beta$ with $\left|\vv\beta\right|=l$. This proves~\eqref{eq_4_5b} with $$\alpha:=\frac{32}{10^7\sqrt{d}}\, , $$ which clearly satisfies~\eqref{eq_4_5c}.


To prove part~(b) of Proposition~\ref{proposition_explicit_4_1}, we closely follow the proof of part (b) of  ~\cite[Proposition~4.1]{Bernik-Kleinbock-Margulis-01:MR1829381}.  The  new ingredient in our proof is the calculation of explicit constants at appropriate places. With this in mind, let
$\vv g = (g_1,g_2) \in\cG$ and take $B$ appearing at the start of the proof of ~\cite[Proposition~4.1(b)]{Bernik-Kleinbock-Margulis-01:MR1829381} to be $\cU$, so that $\hat B=\frac{1}{2} \vv U $.
We claim that there exists a point $\bm{y}\in \hat B$ such that
\begin{equation} \label{ez_gp}
\left\|\bm{g}(\bm{y})\right\|_2 \, \ge \, \tau:=\frac{r^l s_0}{4l^l(l+1)!}\cdotp
\end{equation}
To see that this is so, take $\bm{v}:=(1,0)\in\Sph^1$. In view of~\eqref{eq_4_5a}, there exists a vector $\bm{u}\in\Sph^{d-1}$ and $ 1 \leq k\leq l$ such that
$$
\left|\bm{v}\cdot\frac{\partial^k\bm{g}}{\partial\bm{u}^k}\left(\bm{x}\right)\right| \; =\; \left|\frac{\partial^k g_1}{\partial\bm{u}^k}\left(\bm{x}\right) \right| \; \ge \; c:=\frac{s_0}{2}  \quad \forall \  \bm{x}\in \cU \, .
$$
Thus, on applying ~\cite[Lemma~3.6]{Bernik-Kleinbock-Margulis-01:MR1829381} to the function $g_1$ and the ball $\hat B$, we obtain that
$$
\underset{\bm{x}, \bm{y}\in\hat B}{\sup} \left|g_1(\bm{x}) - g_1(\bm{y}) \right| \, \ge \, \frac{r^l c}{l^l(l+1)!}=\frac{r^l s_0}{2l^l(l+1)!}\cdotp
$$
This implies the existence of a point $\vv y\in \hat B$ such that
\begin{equation*} \label{ez_a}
\left\|\bm{g}(\bm{y})\right\|_2 \, \ge \, \left|g_1\left(\bm{y}\right)\right|\,\ge \, \tau
\end{equation*}
as claimed.  Next, observe that for any $\vv w \in\Sph^{d-1}$ and  any $\bm{x}\in\cU$,
$$
\frac{\partial \bm{g}}{\partial \bm{w}}(\bm{x}) \, = \,
\begin{pmatrix}
\bm{u}_1\cdot\frac{\partial \bm{f}}{\partial \bm{w}}(\bm{x})\\[2ex]
\bm{u}_2\cdot\frac{\partial \bm{f}}{\partial \bm{w}}(\bm{x})
\end{pmatrix}.
$$
Therefore, on  using the Cauchy--Schwarz inequality, we obtain via \eqref{eqn:def_N}  that

\begin{equation}\label{ez81}
\left\| \frac{\partial \bm{g}}{\partial \bm{w}}(\bm{x})\right\|_2 \, \le\, \sqrt{2}\left\|\frac{\partial \bm{f}}{\partial \bm{w}}(\bm{x})\right\|_2 \, \le \, \sqrt{2}M \, .
\end{equation}

Now, observe that, in view of \eqref{slv2}, of the definition of $\tau$   and of the fact that $s_0 \le M$, we have that
$$
\tau \le  r \, M.
$$
Consider the ball $B'\subset B=\cU$ with radius $\tau/(2M)  \leq r/2$ centred at $\vv y$, where $\vv y$ satisfies ~\eqref{ez_gp}.
 Take a vector $\vv v\in\Sph^1$ orthogonal to $\vv g(\vv y)$. In view of~\eqref{eq_4_5a}, there exists a vector $\bm{u}\in\Sph^{d-1}$ and $ 1 \leq k\leq l$ such that
$$
\left|\bm{v}\cdot\frac{\partial^k\bm{g}}{\partial\bm{u}^k}\left(\bm{x}\right)\right|  \; \ge \; c=\frac{s_0}{2}   \quad \forall \  \bm{x}\in\cU  \; .
$$
Thus,  on applying ~\cite[Lemma~3.6]{Bernik-Kleinbock-Margulis-01:MR1829381} to the function $ \vv x  \to  \vv v \cdot \vv g(\vv x)$ and the ball $B'$, we obtain that
\begin{equation} \label{ez_w}
\sup_{\vv x\in B'}|\vv v\cdot\vv g(\vv x)|\geq\frac{s_0}{4l^l(l+1)!}\left(\frac{\tau}{M}\right)^l.
\end{equation}

\noindent On the other hand, the upper bound~\eqref{ez81} implies that
\begin{equation} \label{ez_delta}
\sup_{\vv x\in B'}\|\vv g(\vv x)-\vv g(\vv y)\|_2\leq \, \frac{\tau}{2M} \,   \sqrt{2}M \,= \,  \frac{\tau}{\sqrt{2}}\cdotp
\end{equation}
The upshot of \eqref{ez_w} and \eqref{ez_delta} is that we are able to  apply ~\cite[Lemma~4.2]{Bernik-Kleinbock-Margulis-01:MR1829381} to  the map $\vv g:B'\rightarrow\R^2$ to yield ~\eqref{inegrhogb} and thereby complete the proof of  part~(b) of Proposition~\ref{proposition_explicit_4_1}. For ease of comparison, we point out that the quantities $a$, $\delta$ and $w$ appearing in the statement of ~\cite[Lemma~4.2]{Bernik-Kleinbock-Margulis-01:MR1829381}  correspond to  $\tau$, $\tau/\sqrt{2}$ and the right--hand side of\eqref{ez_w} respectively.

\end{proof}

\vspace{5ex}

 \noindent{\bf Acknowledgements.}  The  main catalysts for this paper were  the 'York-BT Mathematics of MIMO'  meeting at BT, Adastral Park on 14 June 2013 and the subsequent  international 'Workshop on interactions between number theory and wireless communication' at the University of York between 9-23 May 2014. The aim of these meetings was to explore potential applications to wireless technology of current research on Diophantine approximation. We would like to take this opportunity to thank the  `electronic' participants for putting up with our naive questions and often nonsensical ramblings.   In particular, we would like to thank to Alistar Burr (Dept. of Electronics, York) and Keith Briggs (BT Labs) for their enthusiastic support in making sure that the meetings actually happened -- they were our main link to the electronics world!

 All five authors of this paper are  number theorists and we have all benefited hugely from the presence of Maurice Dodson in our lives at some stage in our careers in some form or another.  Not only does he have a wonderful mind and personality but his intellectual curiosity knows no boundaries.  We would like to think that some of his curiosity has rubbed off onto us and for this we are forever in his debt.  There is so much to gain in engaging in dialogue with researchers in other disciplines and  attempting to overcome `language' barriers. In short, thank you Maurice for being a genuine intellectual.

SV would like to thank his teenage daughters Iona and Ayesha for still being a joyous addition in his life but would strongly urge them to improve their taste in music.  Also a massive thanks to Bridget Bennett for sticking around even at fifty  -- congratulations!
EZ is enormously grateful to Aljona who not only recently gave birth to their daughter Alyssa  but looked after her during his obsession with this project.  Infinite thanks to Alyssa herself,  just for her simple existence and for all the joy and happiness she has already brought to his life.

\end{document}